\documentclass[11pt,letterpaper,reqno]{amsart}

\usepackage{amsmath,amssymb,amsthm,amsfonts,mathtools}
\usepackage{mathrsfs}
\usepackage{xcolor}
\usepackage[T1]{fontenc}
\usepackage{microtype}
\usepackage{aliascnt}

\usepackage[colorlinks=true,
  linkcolor=blue,
  citecolor=blue,
  urlcolor=blue]{hyperref}
\usepackage[nameinlink,capitalize,noabbrev]{cleveref}

\allowdisplaybreaks
\numberwithin{equation}{section}

\newtheorem{theorem}{Theorem}[section]

\newaliascnt{problem}{theorem}

\aliascntresetthe{problem}

\newaliascnt{proposition}{theorem}
\newtheorem{proposition}[proposition]{Proposition}
\aliascntresetthe{proposition}

\newaliascnt{lemma}{theorem}
\newtheorem{lemma}[lemma]{Lemma}
\aliascntresetthe{lemma}

\newaliascnt{corollary}{theorem}

\aliascntresetthe{corollary}

\newaliascnt{remark}{theorem}
\newtheorem{remark}[remark]{Remark}
\aliascntresetthe{remark}

\crefname{theorem}{Theorem}{Theorems}
\Crefname{theorem}{Theorem}{Theorems}
\crefname{problem}{Problem}{Problems}
\Crefname{problem}{Problem}{Problems}
\crefname{proposition}{Proposition}{Propositions}
\Crefname{proposition}{Proposition}{Propositions}
\crefname{lemma}{Lemma}{Lemmas}
\Crefname{lemma}{Lemma}{Lemmas}
\crefname{corollary}{Corollary}{Corollaries}
\Crefname{corollary}{Corollary}{Corollaries}
\crefname{remark}{Remark}{Remarks}
\Crefname{remark}{Remark}{Remarks}

\newaliascnt{claim}{theorem}

\aliascntresetthe{claim}

\newaliascnt{question}{theorem}

\aliascntresetthe{question}

\newaliascnt{conjecture}{theorem}
\newtheorem{conjecture}[conjecture]{Conjecture}
\aliascntresetthe{conjecture}

\theoremstyle{definition}

\newaliascnt{definition}{theorem}

\aliascntresetthe{definition}

\newaliascnt{example}{theorem}

\aliascntresetthe{example}

\crefname{claim}{Claim}{Claims}
\Crefname{claim}{Claim}{Claims}
\crefname{question}{Question}{Questions}
\Crefname{question}{Question}{Questions}
\crefname{conjecture}{Conjecture}{Conjectures}
\Crefname{conjecture}{Conjecture}{Conjectures}
\crefname{definition}{Definition}{Definitions}
\Crefname{definition}{Definition}{Definitions}
\crefname{example}{Example}{Examples}
\Crefname{example}{Example}{Examples}

\newcommand{\R}{\mathbb R}
\newcommand{\Sph}{\mathbb S}
\newcommand{\tr}{\operatorname{tr}}
\newcommand{\HS}{\mathrm{HS}}
\newcommand{\Id}{I}
\newcommand{\dd}{\,\mathrm d}
\newcommand{\Def}{\mathfrak D}
\newcommand{\cR}{\mathcal R}
\newcommand{\cB}{\mathcal B}

\newcommand{\ip}[2]{\left\langle #1,#2\right\rangle}
\newcommand{\norm}[1]{\left\lVert #1\right\rVert}
\newcommand{\abs}[1]{\left\lvert #1\right\rvert}
\newcommand{\SobCap}{\operatorname{Cap}}

\begin{document}

\title[The Dirichlet Ashbaugh--Benguria reciprocal-gap conjecture]
{The Ashbaugh--Benguria reciprocal-gap conjecture for Dirichlet eigenvalues}

\author[Y.~Li]{Yanyang Li}
\address{School of Mathematics, Southeast University, Nanjing 211189, P. R. China}
\email{liyanyang1219@gmail.com}

\author[Q.~Tang]{Quanyu Tang}
\address{School of Mathematics and Statistics, Xi'an Jiaotong University, Xi'an 710049, P. R. China}
\email{tangquanyu827@gmail.com}

\author[H.~Zhang]{Haiqi Zhang}
\address{School of Mathematics, Shandong University, Jinan 250100, P. R. China}
\email{ZHQAQ2024@outlook.com}

\subjclass[2020]{Primary 35P15; Secondary 49R05}

\keywords{Ashbaugh--Benguria conjecture, isoperimetric inequality, Dirichlet eigenvalues}

\begin{abstract}
We prove the Ashbaugh--Benguria reciprocal-gap conjecture for the Dirichlet Laplacian in every dimension $N\ge2$. Specifically, if $\Omega\subset\mathbb R^N$ is a bounded domain and
$$
0<\lambda_1(\Omega)<\lambda_2(\Omega)\le\lambda_3(\Omega)\le\cdots
$$
are its Dirichlet eigenvalues, then
$$
 \sum_{i=1}^{N}
 \frac{\lambda_1(\Omega)}
 {\lambda_{i+1}(\Omega)-\lambda_1(\Omega)}
 \ge
 \frac{N}{j_{N/2,1}^2/j_{N/2-1,1}^2-1},
$$
where $j_{\mu,1}$ denotes the first positive zero of the Bessel function $J_\mu$ of the first kind of order $\mu$. We also characterize the equality case: equality holds precisely when $\Omega$ agrees with a Euclidean ball up to a set of Sobolev $H^1$-capacity zero.  In particular, among bounded Lipschitz domains, equality holds if and only if $\Omega$ is a Euclidean ball.
\end{abstract}

\maketitle


\section{Introduction}\label{sec:introduction}

\subsection{Origin of the problem}\label{subsec:origin}

Throughout the paper, by a domain we mean a connected open set. Let
$\Omega\subset\R^N$, $N\ge2$, be a bounded domain.  We consider the
Dirichlet eigenvalue problem
\[
\begin{cases}
 -\Delta u=\lambda u & \text{in }\Omega,\\
 u=0 & \text{on }\partial\Omega,
\end{cases}
\]
where the boundary condition is understood in the weak sense
$u\in H_0^1(\Omega)$. Its eigenvalues are denoted by
\[
0<\lambda_1(\Omega)<\lambda_2(\Omega)\le\lambda_3(\Omega)\le\cdots,
\]
and are repeated according to multiplicity.

Set $\nu=(N-2)/2$, and let $j_{\mu,k}$ denote the $k$th positive zero of the Bessel function $J_\mu(x)$ of the first kind of order $\mu$. The problem studied here belongs to the Payne--P\'olya--Weinberger
program on sharp low-eigenvalue isoperimetric inequalities for the
Dirichlet Laplacian. In later work on lower Dirichlet eigenvalues,
Ashbaugh and Benguria formulated the following reciprocal-gap conjecture~\cite{AshbaughBenguria1993,AshbaughBenguria1993Neumann}. The conjecture was later mentioned again in Ashbaugh's list of open problems and in Henrot's monograph~\cite{Ashbaugh1999,Henrot2006}.

\begin{conjecture}[Ashbaugh--Benguria]\label{conj:AB-Dirichlet}
Let \(N\ge2\). Then every bounded domain \(\Omega\subset\R^N\) satisfies
\begin{equation}\label{eq:conjecture-main} \sum_{i=1}^{N} \frac{\lambda_1(\Omega)}{\lambda_{i+1}(\Omega)-\lambda_1(\Omega)} \ge \frac{N}{j_{\nu+1,1}^2/j_{\nu,1}^2-1}, 
\end{equation} 
with equality if and only if $\Omega$ is a Euclidean ball.
\end{conjecture}

For arbitrary domains, the literal equality statement must be interpreted with
the usual capacitary convention for the Dirichlet Laplacian: removing a set of
Sobolev $H^1$-capacity zero does not change $H_0^1$ and hence does not change
the spectrum.  Thus the exact equality statement proved below is formulated up
to $H^1$-capacity zero; for smooth, or more generally Lipschitz, domains this
reduces to the classical statement that the only equality cases are balls.

The constant in \eqref{eq:conjecture-main} is forced by the ball. We write \(B_R(a):=\{x\in\R^N: |x-a|<R\}\) for the open Euclidean ball of radius \(R>0\) centered at \(a\in\R^N\), and write \(B_R:=B_R(0)\). By separation of variables in spherical
coordinates, together with the scaling of Dirichlet eigenvalues, see
\cite[Exercise~1.2.21 and Lemma~2.1.30]{LevitinMangoubiPolterovich2023},
one has
\[
 \lambda_1(B_R)=\frac{j_{\nu,1}^2}{R^2},
 \qquad
 \lambda_2(B_R)=\cdots=\lambda_{N+1}(B_R)
 =\frac{j_{\nu+1,1}^2}{R^2}.
\]
Indeed, the first eigenfunction is radial, while the next eigenspace is the
degree-one spherical-harmonic eigenspace, which has dimension \(N\).
Therefore
\[
 \sum_{i=1}^N
 \frac{\lambda_1(B_R)}{\lambda_{i+1}(B_R)-\lambda_1(B_R)}
 =
 \frac{N}{j_{\nu+1,1}^2/j_{\nu,1}^2-1}.
\]
Hence the right-hand side of \eqref{eq:conjecture-main} is exactly the value of the reciprocal-gap sum on balls.

\subsection{Main result}\label{subsec:main-results}

We use the standard Sobolev capacity, also called the
\(H^1\)-capacity.  For \(E\subset\R^N\), set
\[
 \SobCap(E):=
 \inf\left\{
 \int_{\R^N}\bigl(|\nabla \eta|^2+\eta^2\bigr)\,\dd x:
 \eta\in C_c^\infty(\R^N),\
 \eta\ge 1 \text{ in an open neighbourhood of } E
 \right\}.
\]
This is equivalent to the usual definition with \(C_c^1(\R^N)\) test
functions, as used for instance in
\cite[Definition~4.1.8]{LevitinMangoubiPolterovich2023}.

A statement is said to hold \emph{quasi-everywhere} if it fails only on a set
of \(H^1\)-capacity zero.  A function \(w:\R^N\to\R\) is called
\emph{quasi-continuous} if, for every \(\varepsilon>0\), there exists a set
\(E\subset\R^N\) with \(\SobCap(E)<\varepsilon\) such that
\(w|_{\R^N\setminus E}\) is continuous.  Every \(v\in H^1(\R^N)\) admits a \emph{quasi-continuous representative}, that is, a
quasi-continuous function \(\widetilde v\) such that \(\widetilde v=v\) almost
everywhere.  This representative is unique up to a set of \(H^1\)-capacity
zero.  We refer to \cite[Ch.~4]{EvansGariepy2015} for these standard facts;
see also \cite[Definitions~4.1.9--4.1.10 and Theorem~4.1.12]{LevitinMangoubiPolterovich2023}
for the terminology and the uniqueness statement.

We shall use the standard capacitary characterization of the Dirichlet
Sobolev space: identifying \(H_0^1(U)\) with its zero extensions to
\(\R^N\), one has
\begin{equation}\label{eq:cap-def-of-h01}
 H_0^1(U)=
 \{v\in H^1(\R^N): \widetilde v=0
 \text{ quasi-everywhere on } \R^N\setminus U\}.
\end{equation}
For this characterization, see
\cite[Theorem~4.1.11]{LevitinMangoubiPolterovich2023}.

In particular, if \(U,V\subset\R^N\) are bounded open sets and \(\SobCap(U\triangle V)=0\), then \(H_0^1(U)=H_0^1(V)\). Since sets of \(H^1\)-capacity zero have Lebesgue measure zero, the \(L^2\)-spaces are naturally identified. Hence, by the min--max characterization of the Dirichlet eigenvalues, the Dirichlet spectra of \(U\) and \(V\) coincide, including multiplicities.

The main result of this paper confirms \cref{conj:AB-Dirichlet}.

\begin{theorem}\label{thm:main}
Let \(N\ge2\). Then every bounded domain \(\Omega\subset\R^N\) satisfies
\begin{equation}\label{eq:main}
 \sum_{i=1}^{N}
 \frac{\lambda_1(\Omega)}{\lambda_{i+1}(\Omega)-\lambda_1(\Omega)}
 \ge
 \frac{N}{j_{\nu+1,1}^2/j_{\nu,1}^2-1}.
\end{equation}
Equality holds in
\eqref{eq:main} if and only if there exist $a\in\R^N$ and $R>0$ such that
\begin{equation}\label{eq:equality-capacity}
 \SobCap\bigl(\Omega\triangle B_R(a)\bigr)=0.
\end{equation}
In particular, if $\Omega$ is a bounded Lipschitz domain, then equality holds if and only if $\Omega$ is a Euclidean ball.
\end{theorem}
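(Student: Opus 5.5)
We follow the Ashbaugh--Benguria proof of the PPW conjecture, but keep all $N$ gaps in a weighted sum instead of bounding only $\lambda_2$. Let $u_1>0$ be the normalized first Dirichlet eigenfunction. We use test functions $\phi_i=P_i u_1$, $i=1,\dots,N$, where $P_i(x)=g(|x|)x_i/|x|$ and $g$ is the Ashbaugh--Benguria function built from the Bessel functions of the ball $B_{R_1}$ with $\lambda_1(B_{R_1})=\lambda_1(\Omega)$. That is, $g(r)=J_{\nu+1}(\beta r)/J_\nu(\alpha r)$ (suitably scaled) for $r<R_1$, and $g$ is frozen at its maximum value for $r\ge R_1$. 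By a Brouwer fixed-point argument we choose the origin so that $\int P_iu_1^2=0$, and by a rotation we make $\phi_i$ orthogonal to $u_2,\dots,u_i$ (Gram--Schmidt on the matrix $\int P_iu_1u_{j+1}$). The standard gap computation gives, for each $i$,
\[
(\lambda_{i+1}-\lambda_1)\int P_i^2u_1^2\le\int|\nabla P_i|^2u_1^2 .
\]
The reciprocal sum is the delicate point. Instead of summing these inequalities, we apply the refined Hile--Protter/Yang-type identity. Pairing $[\Delta,P_i]u_1$ with $x$-type multipliers yields $\int P_i^2u_1^2\le (\lambda_{i+1}-\lambda_1)^{-1}\int|\nabla P_i|^2u_1^2$. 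It also yields the dual bound
\[
\Bigl(\int u_1^2\, \mathrm{tr}\, D P\Bigr)^2\le \Bigl(\sum_i(\lambda_{i+1}-\lambda_1)\int P_i^2u_1^2\Bigr)\Bigl(\sum_i\frac{1}{\lambda_{i+1}-\lambda_1}\int|\nabla P_i|^2u_1^2\Bigr).
\]
The first step is to derive this from $\int u_1 \partial_i u_1 P_i = -\tfrac12\int u_1^2\partial_iP_i$, using Cauchy--Schwarz together with the eigenfunction expansion of $P_iu_1$ and the orthogonality arranged above.

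Summing over $i$, and using rotational symmetry in the form $\sum_iP_i^2=g^2$ and $\sum_i|\nabla P_i|^2=g'^2+(N-1)g^2/r^2=:B(r)$, we reduce to one-dimensional integrals against $u_1^2$. These are $\int g^2u_1^2$, $\int Bu_1^2$ and $\int (g'+(N-1)g/r)u_1^2$. By the Ashbaugh--Benguria monotonicity facts, $g$ is increasing and $B$ is decreasing. Their Chiti comparison lemma (Schwarz rearrangement $u_1^\star$ compared to the ball eigenfunction $z$, crossing once) then replaces $u_1^2$ by $z^2$ in each term, in the direction that favours the inequality. On the ball all these inequalities are equalities, with value $j_{\nu+1,1}^2/j_{\nu,1}^2-1$. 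This gives $\sum_i\lambda_1/(\lambda_{i+1}-\lambda_1)\ge N/(j_{\nu+1,1}^2/j_{\nu,1}^2-1)$.

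The main obstacle is this step: the reciprocal weighting needs monotonicity of the divergence-type weight $g'+(N-1)g/r$ as well. It also needs the three rearrangement comparisons to point in compatible directions simultaneously. I would first try to verify monotonicity of $g'+(N-1)g/r$ on $(0,R_1)$ via Bessel recurrences, in the spirit of Ashbaugh--Benguria's Riccati analysis of $q=\beta J_{\nu+1}'/J_{\nu+1}$.

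For equality, every inequality must be sharp. Equality in Chiti's comparison forces $u_1^\star=u_1$ up to translation and $|\{u_1>0\}|=|B_{R_1}|$. Equality in Faber--Krahn then gives $\lambda_1(\Omega)=\lambda_1(B_R(a))$ with $|\Omega|=|B_R|$. By the quasi-everywhere characterization \eqref{eq:cap-def-of-h01}, together with the strict Faber--Krahn rigidity for arbitrary open sets, this yields $\operatorname{Cap}(\Omega\triangle B_R(a))=0$. The converse follows because capacity-zero modifications leave $H_0^1$ and the spectrum unchanged, as noted before the theorem. For Lipschitz $\Omega$, the set $\Omega\triangle B_R(a)$ having capacity zero, with both sets open and satisfying the exterior cone condition, forces $\Omega=B_R(a)$.
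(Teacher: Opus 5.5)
There is a genuine gap, and it sits exactly at the step you flag as ``the main obstacle''. It is not a monotonicity issue. Once the $i$-th direction carries its own weight $\lambda_{i+1}-\lambda_1$ (or its reciprocal), the identities $\sum_iP_i^2=g^2$ and $\sum_i\abs{\nabla P_i}^2=g'^2+(N-1)g^2/r^2$ can no longer be used. With $\theta_i=x_i/\abs{x}$, each term
\[
 \int P_i^2u_1^2=\int g^2\theta_i^2u_1^2,\qquad \int\abs{\nabla P_i}^2u_1^2=\int\Bigl(g'^2\theta_i^2+\frac{g^2}{r^2}(1-\theta_i^2)\Bigr)u_1^2
\]
depends on how $u_1^2$ is distributed over directions around the center, in your rotated frame. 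Schwarz symmetrization and Chiti's comparison erase precisely this information, so ``reduce to one-dimensional integrals against $u_1^2$'' is not available.

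Even the cleanest route from your per-direction inequalities shows the problem. Take $\sum_i a_i/b_i\ge(\sum_ia_i)^2/\sum_ia_ib_i$ with $a_i=\int P_i^2u_1^2$ and $b_i=\int\abs{\nabla P_i}^2u_1^2$. This leaves the non-radial quantity $\sum_ia_ib_i$. Its excess over $\frac1N(\sum_ia_i)(\sum_ib_i)$ is a covariance between directional profiles, i.e.\ an anisotropy term that no rearrangement controls.

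Separately, your ``dual bound'' cannot hold as written. Replacing $g$ by $\varepsilon g$ preserves the centering and all your orthogonality conditions. It scales the left side by $\varepsilon^2$ and the right side by $\varepsilon^4$, while $\int u_1^2\tr DP=\int u_1^2\bigl(g'+(N-1)g/r\bigr)>0$. So the bound fails for small $\varepsilon$. Homogeneous Hile--Protter/Yang-type variants, which pair with $\int(\partial_iu_1)^2$, are not sharp on the ball and cannot produce the Bessel constant.

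The missing idea is to keep the anisotropy and estimate it. The paper uses no rotation and no rearrangement:
\begin{itemize}
\item The trace Rayleigh--Ritz principle gives $\sum_i(\lambda_{i+1}-\lambda_1)^{-1}\ge\tr(K^{-1}M)$.
\item Matrix Cauchy--Schwarz gives $\tr(K^{-1}M)\tr(KM)\ge G^2$.
\item The splitting $\tr(KM)=\frac GN\tr K-\ip{S_0}{M_0}_{\HS}$, after completing a square, reduces everything to the trace-free mismatch bound $\norm{\cR}_{\HS}^2\le4G\Def/N$.
\end{itemize}
That bound uses spherical averages $U$ about the Weinberger center together with three ingredients:
\begin{itemize}
\item the degree-two spherical-harmonic estimate $E_\theta\ge\frac{N^2}{4}U\norm{\cB}_{\HS}^2$, which ties the angular anisotropy $\cB(r)$ to the tangential energy;
\item a Wronskian comparison of $\sqrt{U+\varepsilon^2}$ with $\phi$, giving $\Def_<\ge\int_0^1\frac{g(1)^2-g^2}{r^2}E_\theta r^{N-1}\dd r$;
\item the Riccati-barrier pointwise bound on $g$ inside the unit ball, and $C>N-1$ outside it.
\end{itemize}
Your equality argument via equality in Chiti/Faber--Krahn inherits the gap. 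In the paper, equality forces $\Def=0$. Hence $u$ vanishes outside $B_1(a)$ and is the ball ground state there, and the capacitary characterization of $H_0^1(\Omega)$ finishes the argument.
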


\subsection{Relation to previous work}\label{subsec:relation-prior-work}

The starting point of this circle of ideas is the Faber--Krahn inequality,
which says that the ball minimizes the first Dirichlet eigenvalue among sets
of fixed volume \cite{Faber1923,Krahn1925,Krahn1926}.  The corresponding
Neumann isoperimetric theorem is the Szeg\H{o}--Weinberger theorem: among
sets of fixed volume, the ball maximizes the first nonzero Neumann eigenvalue
\cite{Szego1954,Weinberger1956}.  For Dirichlet eigenvalue ratios and sums,
Payne, P\'olya and Weinberger proved non-sharp universal bounds and formulated
sharp ball conjectures for the first low eigenvalues
\cite{PaynePolyaWeinberger1955,PaynePolyaWeinberger1956}.  More explicitly,
writing
\[
A_N:=\frac{\lambda_2(B)}{\lambda_1(B)}
=\frac{j_{\nu+1,1}^2}{j_{\nu,1}^2}
\]
for any Euclidean ball \(B\subset\R^N\), the PPW ratio conjecture asserts that
\[
\frac{\lambda_2(\Omega)}{\lambda_1(\Omega)}\le A_N,
\]
with equality only for balls.  The stronger PPW low-eigenvalue sum conjecture
asserts that
\[
\frac{\lambda_2(\Omega)+\cdots+\lambda_{N+1}(\Omega)}
{\lambda_1(\Omega)}
\le N A_N,
\]
again with equality only for balls.  The ratio conjecture was proved by Ashbaugh and Benguria
\cite{AshbaughBenguria1991,AshbaughBenguria1992a,AshbaughBenguria1992b}.
The sum conjecture remains open in full generality; for some partial bounds
and related progress, see, for instance, \cite[(1.5)--(1.10)]{WangXia2021}.
The Bessel comparison and rearrangement techniques in the work of
Ashbaugh and Benguria are the closest classical ancestors of the present proof.

The full reciprocal-gap inequality also contains the classical PPW ratio
conjecture as an immediate consequence.  Indeed, with \(A_N\) as above, since
$\lambda_{i+1}(\Omega)\ge\lambda_2(\Omega)$ for every $i=1,\ldots,N$,
\eqref{eq:main} implies
\[
 \frac{N}{\lambda_2(\Omega)/\lambda_1(\Omega)-1}
 \ge
 \sum_{i=1}^{N}
 \frac{\lambda_1(\Omega)}{\lambda_{i+1}(\Omega)-\lambda_1(\Omega)}
 \ge
 \frac{N}{A_N - 1}.
\]
Hence $\lambda_2(\Omega)/\lambda_1(\Omega)\le A_N$, which is exactly the
Ashbaugh--Benguria theorem proving the PPW ratio conjecture.  By contrast, the
reciprocal-gap inequality does not imply the stronger PPW sum
conjecture for $\lambda_2+\cdots+\lambda_{N+1}$; it gives a sharp harmonic
mean control of the gaps, not an arithmetic mean control of the eigenvalues.

The stronger PPW low-eigenvalue sum conjecture would itself imply
\cref{conj:AB-Dirichlet}.  Indeed, suppose that
\[
 \frac{\lambda_2(\Omega)+\cdots+\lambda_{N+1}(\Omega)}{\lambda_1(\Omega)}
 \le
 N\frac{j_{\nu+1,1}^2}{j_{\nu,1}^2}.
\]
Set $x_i=\lambda_{i+1}(\Omega)/\lambda_1(\Omega)$. The conjectured sum bound gives
$N^{-1}\sum_i x_i\le A_N$.  Since $x\mapsto (x-1)^{-1}$ is convex and
decreasing on $(1,\infty)$, Jensen's inequality gives
\[
 \frac1N\sum_{i=1}^N\frac1{x_i-1}
 \ge
 \frac1{N^{-1}\sum_i x_i-1}
 \ge
 \frac1{A_N-1},
\]
which is exactly \eqref{eq:conjecture-main}.  Known universal inequalities of
Hile--Protter and Yang type provide important general constraints on low
Dirichlet eigenvalues, but they do not yield the sharp ball constant in
\cref{conj:AB-Dirichlet}; see
\cite{HileProtter1980,Ashbaugh2002,AshbaughHermi2004}. Earlier refinements of
low-eigenvalue ratio and sum bounds include
\cite{Brands1964,Chiti1982,Chiti1983,Marcellini1980,Thompson1969,ChenZheng2011}.

Wang and Xia proved the sharp estimate with one fewer Dirichlet reciprocal gap,
\begin{equation*}
 \sum_{i=1}^{N-1}
 \frac{\lambda_1(\Omega)}{\lambda_{i+1}(\Omega)-\lambda_1(\Omega)}
 \ge
 \frac{N-1}{j_{\nu+1,1}^2/j_{\nu,1}^2-1},
\end{equation*}
for bounded domains with smooth boundary, with equality only for balls
\cite{WangXia2021}. The present result completes the corresponding
\(N\)-term Dirichlet reciprocal-gap inequality and removes the boundary
regularity assumption: \cref{thm:main} applies to arbitrary bounded domains
and gives the sharp equality characterization in capacitary form.

It is useful to compare this with the Neumann problem.  Let
\[
0=\mu_0(\Omega)<\mu_1(\Omega)\le\mu_2(\Omega)\le\cdots
\]
be the Neumann eigenvalues of a smooth bounded domain $\Omega\subset\R^N$,
and let $B_\Omega$ be a ball with $|B_\Omega|=|\Omega|$.  Ashbaugh and
Benguria's Neumann reciprocal-sum conjecture asserts that
\begin{equation*}
 \sum_{i=1}^{N}\frac{1}{\mu_i(\Omega)}
 \ge
 \frac{N}{\mu_1(B_\Omega)},
\end{equation*}
with equality if and only if $\Omega$ is a ball
\cite{AshbaughBenguria1993Neumann}.  This is the natural harmonic-mean
strengthening of the Szeg\H{o}--Weinberger inequality, because the first
nonzero Neumann eigenvalue of a ball has multiplicity $N$.  The sharp
$(N-1)$-term version was proved by Xia and Wang \cite{XiaWang2023}. A recent preprint of He, Li and Tang proves the full
$N$-term Neumann conjecture for smooth bounded Euclidean domains
\cite{HeLiTang2026}. That result is
methodologically close to the matrix viewpoint adopted below, although the
Dirichlet problem has a different weight structure because all trial functions
are multiplied by the ground state.  Related developments in higher Dirichlet
ratios, constant-curvature variants, and Witten--Laplacian analogues are
treated in
\cite{AshbaughBenguria1994,AshbaughBenguria2001,BenguriaLinde2007,ChenMao2024};
see also the survey \cite{BenguriaLindeLoewe2012}.

\subsection{Proof strategy}\label{subsec:proof-strategy}

We briefly describe the main ideas of the proof.  Since the desired inequality
is invariant under dilations, we first reduce to the normalization
\[
 \lambda_1(\Omega)=\alpha^2,
 \qquad
 \alpha=j_{\nu,1},\qquad
 \beta=j_{\nu+1,1},\qquad
 C=\beta^2-\alpha^2.
\]
It is then enough to prove
\[
 \sum_{i=1}^{N}\frac{1}{\lambda_{i+1}-\lambda_1}\ge\frac{N}{C}.
\]

Let \(u\in H_0^1(\Omega)\) be chosen so that
\[
 u\ge0,\qquad -\Delta u=\lambda_1u\ \text{ weakly in }\Omega,\qquad
 \int_\Omega u^2\,\dd x=1.
\]
The trial functions are built from \(u\) and from the quotient
\[
 g(r)=\frac{J_{\nu+1}(\beta r)}{J_\nu(\alpha r)}\qquad(0<r<1),
\]
with \(g\) extended continuously to \(r=0,1\) and then extended as a constant
for \(r\ge1\).  We choose a center \(a\in\R^N\) so that, with
\[
 r=|x-a|,\qquad
 \theta=\frac{x-a}{|x-a|},\qquad
 F(x)=g(r)\theta,
\]
one has
\[
 \int_\Omega F u^2\,\dd x=0.
\]
Thus the functions \(uF_1,\ldots,uF_N\) are orthogonal to \(u\).

The proof then applies a matrix form of the Rayleigh--Ritz principle to these
\(N\) trial functions.  Define
\[
 M=\int_\Omega FF^{\top}u^2\,\dd x,\qquad
 K=\int_\Omega (DF)(DF)^{\top}u^2\,\dd x,
\]
where \(DF\) is the a.e. Jacobian matrix of \(F\).  The trace Rayleigh--Ritz
principle gives
\[
 \sum_{i=1}^{N}\frac{1}{\lambda_{i+1}-\lambda_1}
 \ge \operatorname{tr}(K^{-1}M).
\]
Therefore the main task is to prove
\[
 \operatorname{tr}(K^{-1}M)\ge \frac{N}{C}.
\]

The matrix estimate is obtained by comparing the trace-free parts of two
radial-angular matrices.  Put
\[
 P=\theta\theta^{\top},\qquad
 m(r)=g(r)^2,\qquad
 d(r)=\frac{g(r)^2}{r^2}-g'(r)^2.
\]
Then \(M=\int_\Omega m(r)P u^2\,\dd x\), while \(K\) can be rewritten using
\(P\), \(g'(r)^2\), and \(g(r)^2/r^2\).  If
\[
 G=\operatorname{tr}M,\qquad
 \Def=CG-\operatorname{tr}K,
\]
then the key estimate is that the trace-free discrepancy between the
\(m(r)P\)-matrix and the \(d(r)P\)-matrix is controlled by \(G\Def\).

This key estimate is proved by splitting the integrals into \(0<r<1\) and
\(r\ge1\).  On \(0<r<1\), a pointwise Bessel estimate for \(g\), derived from
the Riccati equation for \(rg'(r)/g(r)\), controls the radial coefficient.
The angular part is controlled by a weak estimate for the spherical averages
\[
 U(r)=\int_{\Sph^{N-1}}u(a+r\theta)^2\,\dd\sigma(\theta),
\]
which requires no regularity of \(\partial\Omega\).  On \(r\ge1\), the
constant extension of \(g\), together with the inequality \(C>N-1\), gives the
needed bound directly.

After this comparison, a completion of squares gives $\operatorname{tr}(KM)\le CG^2/N$. The matrix Cauchy--Schwarz inequality then yields
\[
 \operatorname{tr}(K^{-1}M)\operatorname{tr}(KM)\ge(\operatorname{tr}M)^2=G^2,
\]
and hence \(\operatorname{tr}(K^{-1}M)\ge N/C\).  This proves the normalized
inequality, and scaling gives the stated result.

The equality case is obtained by following the estimates backwards.  Equality
forces \(\Def=0\), which implies that the zero extension of \(u\) is supported
in a unit ball and has the first-ball eigenfunction profile there.  The
capacitary characterization of \(H_0^1(\Omega)\) then shows that \(\Omega\)
agrees with that ball up to a set of Sobolev \(H^1\)-capacity zero; for
Lipschitz domains this reduces to equality with a Euclidean ball.

\subsection{Paper organization}\label{subsec:paper-organization}

\Cref{sec:notation} fixes the Bessel constants and reduces the proof to the
normalized case \(\lambda_1(\Omega)=\alpha^2\).  \Cref{sec:bessel} defines
the ball quotient \(g\), proves the Riccati comparison estimates for
\(rg'(r)/g(r)\), derives the pointwise estimate for \(g\) used later, and
records the bound \(C>N-1\).  \Cref{sec:radial} studies the spherical averages
of the zero extension of the first eigenfunction and proves the weak radial
estimate used in the matrix argument. \Cref{sec:matrix} constructs the centered vector-valued trial map, applies the
trace Rayleigh--Ritz principle, combines the estimates from the regions
\(r<1\) and \(r\ge1\), and proves \cref{thm:main}, including the equality
characterization.  The appendix contains the algebra for the Riccati barrier
residuals.

\section{Notation and normalization}\label{sec:notation}

For $\mu>-1$, let $j_{\mu,k}$ denote the $k$th positive zero of the Bessel
function $J_\mu$.  Throughout,
\begin{equation}\label{eq:nualphabetaC}
 \nu=\frac{N-2}{2},\qquad
 \alpha=j_{\nu,1},\qquad
 \beta=j_{\nu+1,1},\qquad
 C=\beta^2-\alpha^2.
\end{equation}
For real matrices we use the Hilbert--Schmidt inner product and norm
\[
 \ip{A}{B}_{\HS}=\tr(A^{\top}B),
 \qquad
 \norm{A}_{\HS}^2=\ip{A}{A}_{\HS}.
\]
The expression in
\eqref{eq:main} is invariant under dilations.  We shall therefore scale the
set so that
\begin{equation}\label{eq:normalization}
 \lambda_1(\Omega)=\alpha^2.
\end{equation}
Under this normalization, it is enough to prove
\begin{equation}\label{eq:normalized-goal}
 \sum_{i=1}^{N}\frac{1}{\lambda_{i+1}-\lambda_1}\ge\frac{N}{C}.
\end{equation}

\section{Bessel quotient and Riccati barriers}\label{sec:bessel}

The following ball eigenfunction formulas are standard; they are recalled only to fix the normalization and the quotient used in the trial functions.

Define, for $0<r<1$,
\begin{equation}\label{eq:phirvrgr}
 \phi(r)=r^{-\nu}J_\nu(\alpha r),\qquad
 v(r)=r^{-\nu}J_{\nu+1}(\beta r),\qquad
 g(r)=\frac{v(r)}{\phi(r)}
      =\frac{J_{\nu+1}(\beta r)}{J_\nu(\alpha r)}.
\end{equation}
The functions \(\phi\) and \(v\) are understood at \(r=0\) by continuous
extension; in particular,
\[
 \phi(0)=\frac{(\alpha/2)^\nu}{\Gamma(\nu+1)}>0.
\]
These are the standard radial factors for the first Dirichlet eigenfunction
and for the degree-one Dirichlet eigenspace of the unit ball; see, for
example, the separation-of-variables discussion in \cite[Section~2]{WangXia2021}.
Since $\alpha$ and $\beta$ are first positive zeros, $\phi>0$ and $v>0$
on $(0,1)$, while both vanish simply at $r=1$.  We set $g(0)=0$, define
$g(1)$ by the limiting quotient, and extend $g$ constantly to $[1,\infty)$.
For later use set
\begin{equation}\label{eq:m-d-definitions}
 m(r)=g(r)^2\quad(r\ge0),\qquad
 d(r)=\frac{g(r)^2}{r^2}-g'(r)^2\quad(r>0).
\end{equation}
The value of $d$ at $r=0$ is irrelevant in all integrals below.

The radial equations for $\phi$ and $v$ are
\begin{align}
 -\phi''-\frac{N-1}{r}\phi'&=\alpha^2\phi,
 \label{eq:phi-ode}\\
 -v''-\frac{N-1}{r}v'+\frac{N-1}{r^2}v&=\beta^2v.
 \label{eq:v-ode}
\end{align}
Consequently,
\begin{equation}\label{eq:g-divergence-ode}
 -\bigl(r^{N-1}\phi^2g'\bigr)'
 +(N-1)r^{N-3}\phi^2g
 =Cr^{N-1}\phi^2g.
\end{equation}
The power series at the origin and the simple-zero expansions at $r=1$
give the endpoint information
\begin{equation}\label{eq:g-endpoint-asymptotics}
 g(r)=c_0r+O(r^3)\quad(r\downarrow0),\quad
 g(r)=g(1)+O((1-r)^2),\quad g'(r)=O(1-r)\quad(r\uparrow1),
\end{equation}
where $c_0>0$.  To see the expansion at $r=1$ directly, put
$\rho=1-r$.  Since $\phi$ and $v$ have simple zeros at $1$, write
\[
 \phi(r)=A\rho+B\rho^2+O(\rho^3),\qquad
 v(r)=D\rho+E\rho^2+O(\rho^3),
\]
with $A,D>0$.  From \eqref{eq:phi-ode}--\eqref{eq:v-ode},
\[
 \phi''(1)=-(N-1)\phi'(1),\qquad
 v''(1)=-(N-1)v'(1),
\]
and hence $B/A=E/D=(N-1)/2$.  Therefore the linear term in the quotient
$v/\phi$ cancels, which gives the asserted $O((1-r)^2)$ expansion and
$g'(r)=O(1-r)$.

Set
\begin{equation}\label{eq:trhr}
 t(r)=\frac{rg'(r)}{g(r)},\qquad
 h(r)=\frac{r\phi'(r)}{\phi(r)}.
\end{equation}
Dividing \eqref{eq:g-divergence-ode} by $r^{N-1}\phi^2g$ gives the Riccati
equation
\begin{equation}\label{eq:t-riccati}
 rt'+t^2+(N-2+2h)t+Cr^2-(N-1)=0,
\end{equation}
with continuous endpoint values
\begin{equation*}
 t(0)=1,
 \qquad
 t(1)=0.
\end{equation*}

\subsection{A sum over Bessel zeros}\label{subsec:bessel-zero-sum}

The notation and basic facts for Bessel zeros are standard; see
\cite[\S10.21]{DLMF}.  We use the classical Hadamard product
\[
 J_\nu(z)=\frac{(z/2)^\nu}{\Gamma(\nu+1)}
 \prod_{k=1}^{\infty}\left(1-\frac{z^2}{j_{\nu,k}^2}\right),
\]
valid for $\nu>-1$; see, for example, \cite[Eq.~(10.21.15)]{DLMF}.  Its logarithmic
derivative is
\begin{equation}\label{eq:log-derivative}
 \frac{J_\nu'(z)}{J_\nu(z)}
 =\frac{\nu}{z}-2z\sum_{k=1}^{\infty}
 \frac{1}{j_{\nu,k}^2-z^2}.
\end{equation}

The following identity is a special case of Calogero's identity for the zeros
of \(J_\nu\).  We record the short specialization because the precise
normalization is used later.
\begin{lemma}\label{lem:bessel-sum}
One has
\[
 \sum_{k=2}^{\infty}
 \frac{\alpha^2}{j_{\nu,k}^2-\alpha^2}=\frac{N}{4}.
\]
\end{lemma}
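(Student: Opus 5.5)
We derive the identity from the logarithmic derivative \eqref{eq:log-derivative}, evaluated near $z=\alpha=j_{\nu,1}$ after removing the singular term $k=1$. Define
\[
 S(z):=\sum_{k=2}^{\infty}\frac{1}{j_{\nu,k}^2-z^2},
\]
which is analytic near $z=\alpha$. Then \eqref{eq:log-derivative} reads
\[
 \frac{J_\nu'(z)}{J_\nu(z)}=\frac{\nu}{z}-\frac{2z}{\alpha^2-z^2}-2zS(z).
\]
Multiply through by $J_\nu(z)$ to obtain an identity of analytic functions. Then expand both sides in $\varepsilon=z-\alpha$ and compare the constant terms, using the Bessel ODE to express higher derivatives of $J_\nu$ at $\alpha$.

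\textbf{Local expansions.} Write $J_\nu(z)=J_\nu'(\alpha)\varepsilon+\tfrac12J_\nu''(\alpha)\varepsilon^2+O(\varepsilon^3)$, noting that $J_\nu'(\alpha)\neq0$ because the zero is simple. The Bessel equation $z^2J''+zJ'+(z^2-\nu^2)J=0$ at $z=\alpha$ gives $J_\nu''(\alpha)=-J_\nu'(\alpha)/\alpha$. Therefore
\[
 \frac{J_\nu'(z)}{J_\nu(z)}=\frac1\varepsilon+\frac{J''(\alpha)}{2J'(\alpha)}+O(\varepsilon)=\frac1\varepsilon-\frac{1}{2\alpha}+O(\varepsilon).
\]
On the other side, $\alpha^2-z^2=-\varepsilon(2\alpha+\varepsilon)$, so
\[
 -\frac{2z}{\alpha^2-z^2}=\frac{2(\alpha+\varepsilon)}{\varepsilon(2\alpha+\varepsilon)}=\frac1\varepsilon+\frac{1}{2\alpha}+O(\varepsilon).
\]

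\textbf{Matching constant terms.} The $1/\varepsilon$ poles cancel. Comparing the $\varepsilon^0$ terms gives
\[
 -\frac1{2\alpha}=\frac{\nu}{\alpha}+\frac1{2\alpha}-2\alpha S(\alpha),
\]
so $2\alpha^2S(\alpha)=\nu+1=N/2$. Hence $\alpha^2S(\alpha)=N/4$, which is exactly the claim.

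The only delicate points are the following. First, $S$ converges and is analytic (hence continuous) at $\alpha$; this holds because $j_{\nu,k}\sim k\pi$, so the series converges uniformly near $\alpha$. Second, the $\varepsilon^0$ coefficient of $J'/J$ is obtained correctly from the second-order Taylor data, and this is where the Bessel ODE enters; this is the step that most needs checking. Since everything is standard, no real obstacle arises. One should also note that the case $\nu\ge0$ is the relevant one here, since $N\ge2$.
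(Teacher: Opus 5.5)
Your argument is correct, and each step checks. The Bessel equation gives $J_\nu''(\alpha)=-J_\nu'(\alpha)/\alpha$, so the constant term of $J_\nu'/J_\nu$ at $z=\alpha$ is $-1/(2\alpha)$. The isolated $k=1$ term contributes $+1/(2\alpha)$. Matching constant terms then yields $2\alpha^2S(\alpha)=\nu+1=N/2$.

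The paper does not derive the identity. It quotes Calogero's identity $\sum_{k\neq m}(j_{\nu,k}^2-j_{\nu,m}^2)^{-1}=(\nu+1)/(2j_{\nu,m}^2)$ from the literature and sets $m=1$. Your computation is essentially the standard proof of that identity: running it at $z=j_{\nu,m}$ instead of $z=\alpha$ gives the general case verbatim. Your route makes the lemma self-contained. It rests only on the Hadamard product \eqref{eq:log-derivative}, which the paper uses anyway right afterwards to obtain \eqref{eq:h-x}, together with the Bessel ODE and the simplicity of the zero. The paper's route is shorter but depends on an external reference.

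Two minor presentational points. You announce that you will multiply through by $J_\nu(z)$, but what you actually (and correctly) do is compare Laurent coefficients of the two meromorphic sides at $z=\alpha$, so that remark can be dropped. Also, the restriction to $\nu\ge0$ is unnecessary: the argument works verbatim for all $\nu>-1$.
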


\begin{proof}
We use Calogero's identity for the zeros of $J_\nu$:
\[
 \sum_{\substack{k\ge1\\ k\ne m}}
 \frac{1}{j_{\nu,k}^2-j_{\nu,m}^2}
 =
 \frac{\nu+1}{2j_{\nu,m}^2},
 \qquad \nu>-1.
\]
See, for instance, \cite[(1.1)]{BariczJankovMasirevicPoganySzasz2015}.
Taking $m=1$ and using $\alpha=j_{\nu,1}$ gives
\[
 \sum_{k=2}^{\infty}
 \frac{\alpha^2}{j_{\nu,k}^2-\alpha^2}
 =
 \frac{\nu+1}{2}
 =
 \frac{N}{4}.
\qedhere\]
\end{proof}

Now let
\begin{equation*}
 a_k=\frac{\alpha^2}{j_{\nu,k}^2},\qquad
 y_k=\frac{a_k}{1-a_k}\quad(k\ge2).
\end{equation*}
By \eqref{eq:phirvrgr}, \eqref{eq:trhr} and \eqref{eq:log-derivative},
\begin{equation*}
 h(r)
 =-\nu+\alpha r \frac{J_\nu'(\alpha r)}{J_\nu(\alpha r)}
 =-2r^2\sum_{k=1}^{\infty}\frac{a_k}{1-a_kr^2}.
\end{equation*}
Writing $x=\sqrt{1-r^2}$, we can separate the $a_1=1$ term and obtain
\begin{equation}\label{eq:h-x}
 h(r)=-\frac{2(1-x^2)}{x^2}
 -2(1-x^2)\sum_{k=2}^{\infty}\frac{y_k}{1+y_kx^2},
\end{equation}
where, by \cref{lem:bessel-sum},
\begin{equation}\label{eq:sum-y}
 \sum_{k=2}^{\infty}y_k=\frac{N}{4}.
\end{equation}
The remaining $k\ge2$ series in the $x$-variable, and their first derivatives with respect to $x$, converge uniformly on $[0,1]$.  Indeed, $j_{\nu,k}\asymp k$, hence $y_k=O(k^{-2})$, while
\[
\left|\frac{d}{dx}\left(\frac{y_k}{1+y_kx^2}\right)\right|\le 2y_k^2.
\]
All termwise differentiations below are therefore justified.

\subsection{A one-crossing Riccati comparison principle}
\label{subsec:one-sign-comparison}

The Riccati equation will be compared with elementary barriers. For a comparison function \(Q\), the quantity \(E_Q\) below is the \emph{Riccati residual}, namely the left-hand side of \eqref{eq:t-riccati} with \(t\) replaced by \(Q\).  The next lemma is a simple integrating-factor comparison principle: if this residual has one prescribed sign change, then the common endpoint values force \(Q\) to lie on the corresponding side of \(t\).

\begin{lemma}\label{lem:one-sign}
Let \(Q\in C^1([0,1])\) satisfy \(Q(0)=1\) and \(Q(1)=0\), and define its Riccati residual by
\begin{equation}\label{eq:EQ}
 E_Q(r)=rQ'(r)+Q(r)^2+(N-2+2h(r))Q(r)+Cr^2-(N-1).
\end{equation}
Suppose first that there is $r_0\in[0,1]$ such that
$E_Q\ge0$ on $(0,r_0)$ and $E_Q\le0$ on $(r_0,1)$.  Then
$Q\ge t$ on $[0,1]$.  Suppose instead that there is $r_0\in[0,1]$
such that $E_Q\le0$ on $(0,r_0)$ and $E_Q\ge0$ on $(r_0,1)$.  Then
$Q\le t$ on $[0,1]$.
\end{lemma}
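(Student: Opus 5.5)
We prove the first case; the second follows by reversing all inequalities. The approach is to study the difference $w=Q-t$ directly. Subtracting the Riccati equation \eqref{eq:t-riccati} from the definition \eqref{eq:EQ} of $E_Q$ gives, on $(0,1)$,
\[
 r w' + (Q+t+N-2+2h)\,w = E_Q .
\]
This is a linear first-order equation for $w$ with coefficient $p(r)=(Q+t+N-2+2h)/r$. Let $\mu(r)=\exp\bigl(\int_{1/2}^r p(s)\,\dd s\bigr)>0$ be an integrating factor on $(0,1)$. Then
\[
 (\mu w)'=\mu E_Q/r .
\]
Hence $\mu w$ is nondecreasing on $(0,r_0)$ and nonincreasing on $(r_0,1)$. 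So $\mu w$ is unimodal, and it suffices to show $\liminf_{r\downarrow0}\mu w\ge0$ and $\liminf_{r\uparrow1}\mu w\ge0$. Given these, $\mu w\ge 0$ everywhere: on $(0,r_0)$, $\mu w(r)\ge\lim_{s\downarrow0}\mu w(s)\ge0$, and similarly on $(r_0,1)$ using the right endpoint. Then $w\ge0$, i.e.\ $Q\ge t$ on $(0,1)$, and the endpoints follow by continuity since both functions equal $1$ at $0$ and $0$ at $1$.

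The main step is the endpoint behaviour of $\mu$.

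\emph{Near $r=0$.} We have $Q,t\to1$ and $h=O(r^2)$, so $p(r)=(N+o(1))/r$. Thus $\mu(r)\to0$, roughly like $r^N$. Since $w$ is bounded near $0$ (both $Q$ and $t$ are continuous), $\mu w\to0$.

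\emph{Near $r=1$.} By \eqref{eq:h-x}, $h(r)\sim -2/(1-r^2)\sim -1/(1-r)$, so $p(r)\approx -2/(1-r)$ and $\mu$ blows up like $(1-r)^{-2}$. Here we need a rate for $w$: it suffices that $w=O((1-r)^2)$, or even $w=o((1-r)^2)$, to obtain a finite limit of $\mu w$ rather than only a sign. We argue as follows.
\begin{itemize}
\item[(a)] For $t$: by \eqref{eq:g-endpoint-asymptotics}, $g'=O(1-r)$ and $g(1)>0$, so $t=O(1-r)$. Refining this via the Riccati equation, the singular coefficient $2h t\approx -2t/(1-r)$ forces $t$ to vanish linearly with a specific slope determined by $C-(N-1)$ and the values at $1$.
\item[(b)] For $Q$: since $Q\in C^1$ with $Q(1)=0$, $Q=O(1-r)$. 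Then $E_Q$ contains the term $2hQ\approx -2Q'(1)\cdot(-1)$, which stays bounded. Boundedness of $E_Q$ near $1$ therefore yields no slope constraint on $Q$, so a direct limit argument is cleaner.
\end{itemize}
Concretely, on $(r_0,1)$ we integrate $(\mu w)'$ from $r$ to $s$ and let $s\uparrow1$. If $\mu w$ were negative at some $r_1>r_0$, monotonicity would make $\mu w\le \mu w(r_1)<0$ on $(r_1,1)$, so $w\le c(1-r)^2$ with $c<0$ there. We then compare this with the linear vanishing of $Q$ and $t$ and with the sign of $E_Q$ in the equation near $1$ to reach a contradiction, the point being that a strictly negative $\mu w$ is incompatible with the residual $E_Q$ staying bounded. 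If needed, we instead work with the precise asymptotic $\mu\sim c(1-r)^{-2}$ and show that $w(1)=0$ together with $E_Q$ bounded forces $w'(1)=0$. Matching first-order terms in the equation at $r=1$, where $t'(1)$ is fixed by the Riccati equation, does this, giving $w=O((1-r)^2)$.

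I expect this right-endpoint analysis to be the main obstacle. A clean route is to check that the coefficient $Q+t+N-2+2h$ tends to $-\infty$ and to use the standard fact that, for $rw'+pw=E$ with $p\to-\infty$, the unique bounded solution is determined by $E$ alone. This makes the endpoint sign of $w$ follow from the sign of $E_Q$ on $(r_0,1)$, after which the argument closes as above.
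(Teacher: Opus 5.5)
Your setup is exactly the paper's. Subtracting \eqref{eq:t-riccati} from \eqref{eq:EQ} gives a linear equation for $w=Q-t$, and the paper's $\mu_Q=r^{N-2}\phi^2F_Qg$ satisfies $r\mu_Q'/\mu_Q=N-2+2h+Q+t$. So your $\mu$ is a constant multiple of $\mu_Q$, and your unimodality argument and your treatment of $r=0$ are correct. The gap is at $r=1$, and it comes from a sign error. From $p(r)\approx-2/(1-r)$ you get $(\log\mu)'\approx 2(\log(1-r))'$. Hence $\mu(r)\asymp(1-r)^2\to0$, not $(1-r)^{-2}$. This is also visible in the paper's form of $\mu_Q$: $\phi^2$ vanishes to second order at $1$, while $F_Q$ and $g$ have positive finite limits there.

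Everything you build on the blow-up claim is therefore unnecessary, and as written it does not amount to a proof. The contradiction from $\mu w\le\mu w(r_1)<0$ is never derived. The ``standard fact'' about the unique bounded solution is only asserted; it does hold, but precisely because the homogeneous solution $1/\mu$ blows up, i.e.\ because $\mu\to0$. That contradicts your own asymptotic. The auxiliary claim that $w(1)=0$ and bounded $E_Q$ force $w'(1)=0$ is false. Matching the $O(1)$ terms at $r=1$ in the equations for $Q$ and $t$, where $2hQ\to2Q'(1)$ and $2ht\to2t'(1)$, gives $3w'(1)=E_Q(1)$. For instance $E_{Q_-}(1)=C-N-5\neq0$ when $N=2$.

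The repair is one line. Since $\mu\to0$ at $r=1$ and $w=Q-t$ is bounded, $\mu w\to0$ there exactly as at $r=0$. No rate for $w$ is needed, and none of (a)--(b) is needed either. With this correction your proof is complete and is the paper's proof. Your formulation via one-sided monotonicity of $\mu w$ on $(0,r_0)$ and $(r_0,1)$ is even slightly leaner than the paper's. The paper writes $\mu_Qw(r)=\int_0^r\mu_QE_Q/s\,\dd s$ with $I(1)=0$, which also requires checking $\mu_QE_Q/r\in L^1(0,1)$ through the boundedness of $hQ$ near $1$. Your version needs only the two endpoint limits of $\mu w$.
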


\begin{proof}
Let
\[
 F_Q(r)=r\exp\left(\int_0^r\frac{Q(s)-1}{s}\,\dd s\right),
 \qquad 0<r\le1.
\]
Since \(Q\in C^1([0,1])\) and \(Q(0)=1\), the integrand
\((Q(s)-1)/s\) has a continuous extension to \(s=0\), with value \(Q'(0)\).
Thus \(F_Q\) is well-defined and positive on \((0,1]\).  Moreover,
\[
 \frac{F_Q'(r)}{F_Q(r)}
 =
 \frac1r+\frac{Q(r)-1}{r}
 =
 \frac{Q(r)}{r},
\]
and hence \(rF_Q'/F_Q=Q\) on \((0,1)\).  Also
\[
 F_Q(r)\asymp r\quad(r\downarrow0),
 \qquad
 F_Q(r)\asymp1\quad(r\uparrow1).
\]
Put
\[
 w=Q-t,
 \qquad
 \mu_Q=r^{N-2}\phi^2F_Qg.
\]
Subtracting \eqref{eq:t-riccati} from \eqref{eq:EQ} yields
\[
 (\mu_Qw)'=\frac{\mu_Q}{r}E_Q.
\]
Near $0$, we have $F_Q(r)\asymp r$, while
$g(r)\asymp r$ and $\phi(r)\asymp1$; hence $\mu_Q(r)=O(r^N)$.  Near
$1$, we have $F_Q(r)\asymp1$, and
$\phi(r)^2=O((1-r)^2)$.  Since $Q$ and $t$ are bounded and agree at both
endpoints,
\begin{equation*}
 \lim_{r\downarrow0}\mu_Q(r)w(r)
 =\lim_{r\uparrow1}\mu_Q(r)w(r)=0.
\end{equation*}
Moreover $E_Q$ is locally bounded at $0$ and remains bounded at $1$: the
only singular coefficient is $h(r)$, and $h(r)Q(r)=O(1)$ because
$h(r)=O((1-r)^{-1})$ and $Q(r)=O(1-r)$ there.  Thus
$(\mu_Q/r)E_Q\in L^1(0,1)$.
Consequently, with
\[
 I(r)=\int_0^r\frac{\mu_Q(s)}{s}E_Q(s)\dd s,
\]
we have $\mu_Q(r)w(r)=I(r)$ and $I(1)=0$.  In the
positive-to-negative case, $I(r)\ge0$ for $r\le r_0$, while for
$r\ge r_0$ one has
\[
 I(r)=-\int_r^1\frac{\mu_Q(s)}{s}E_Q(s)\dd s\ge0.
\]
Thus $w\ge0$.  The negative-to-positive case is identical with the
inequalities reversed and gives $w\le0$.
\end{proof}

\subsection{Lower and upper Riccati barriers}\label{subsec:riccati-barriers}

We now apply the comparison lemma to two explicit barriers.

\begin{proposition}\label{prop:t-barriers}
For every $0\le r\le1$,
\begin{equation}\label{eq:t-barriers}
 1-r^2\le t(r)\le
 \frac{\kappa_N(1-r^2)}
 {1+(\kappa_N-1)(1-r^2)},
 \qquad
 \kappa_N=\frac{\sqrt{4N+21}}{4}.
\end{equation}
In particular, $0<t(r)<1$ for $0<r<1$, and $g$ is strictly increasing on
$(0,1)$.
\end{proposition}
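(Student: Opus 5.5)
The plan is to apply \cref{lem:one-sign} twice: once with the lower barrier $Q_-(r)=1-r^2$ and once with the upper barrier $Q_+(r)=\kappa_N(1-r^2)/(1+(\kappa_N-1)(1-r^2))$. Both are $C^1$ on $[0,1]$ with $Q(0)=1$ and $Q(1)=0$, so the whole proof reduces to the sign pattern of their Riccati residuals $E_{Q_\pm}$. The lower bound $1-r^2\le t$ needs $E_{Q_-}\le0$ then $\ge0$ (one crossing). The upper bound needs $E_{Q_+}\ge0$ then $\le0$.

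To compute the residuals, I will work in the variable $x=\sqrt{1-r^2}\in[0,1]$, so that $r^2=1-x^2$ and $rQ'(r)=-(1-x^2)\,\tfrac{d Q}{d(x^2)}\cdot 2$. I then insert the representation \eqref{eq:h-x} of $h$. For $Q_-=x^2$ the singular term $-2(1-x^2)/x^2$ in $h$ multiplies $2x^2$ and becomes regular. Write $S(x)=\sum_{k\ge2}y_k/(1+y_kx^2)$ and recall $C=\beta^2-\alpha^2$. Then $E_{Q_-}$ becomes an expression of the form
\[
(1-x^2)\bigl[\text{polynomial in }x^2\text{ involving }C\bigr]-4x^2(1-x^2)S(x).
\]
The same happens for $Q_+$ after clearing the denominator $1+(\kappa_N-1)x^2$.

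The difficulty is that $C$ is transcendental, so the sign analysis must use only bounds on $C$ and on $S$. The available information on $S$ is that it is positive and decreasing in $x$, with $S(0)=\sum y_k=N/4$ by \eqref{eq:sum-y}. It also satisfies convexity-type bounds, since each term is convex decreasing in $x^2$; for instance $S(0)/(1+y_2x^2)\le S(x)\le S(0)$, or the Jensen-type bound $S(x)\ge S(0)/(1+x^2\,\sum y_k^2/\sum y_k)$. For $C$ I will get the needed information from the Riccati equation itself at the endpoints. Expanding \eqref{eq:t-riccati} near $r=1$ (where $h\sim -2/x^2$) and near $r=0$ links $t'(0)$, $t'(1)$ and $C$ to $N$ and to sums of the $y_k$. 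In particular the order-$x^2$ coefficient of the residual at $x=0$ and at $x=1$ must match, and this fixes the endpoint signs of $E_{Q_\pm}$. I expect the barrier $\kappa_N=\sqrt{4N+21}/4$ to be chosen exactly so that the leading endpoint term of $E_{Q_+}$ has the right sign using only $S(0)=N/4$.

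The main obstacle is proving that each residual, divided by its obvious positive factors (powers of $1-x^2$ and $x^2$), changes sign at most once on $(0,1)$. I will first try to show that, after division, the residual equals a function that is monotone in $x^2$: a polynomial part of low degree plus $-c\,x^2S(x)$ (or a similar term), where $x^2S(x)=\sum y_kx^2/(1+y_kx^2)$ is increasing and concave in $x^2$. If the polynomial part is linear or convex in $x^2$, then the difference has a definite convexity, so it has at most two zeros. A known endpoint zero or sign then leaves exactly one crossing. If this fails, I will fall back on bounding $S$ between the explicit convex envelopes above and checking the resulting rational inequalities directly; presumably this is the algebra deferred to the appendix.

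Finally, the last assertions follow directly from the barriers. For $0<r<1$, both barriers lie strictly in $(0,1)$, because $\kappa_N>1$ and $\kappa_N y/(1+(\kappa_N-1)y)<1$ for $y<1$. Hence $0<t<1$, so $g'=tg/r>0$ since $g>0$, and $g$ is strictly increasing.
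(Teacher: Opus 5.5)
Your architecture matches the paper: the same two barriers, \cref{lem:one-sign}, and the residuals rewritten in $x=\sqrt{1-r^2}$ via \eqref{eq:h-x}. Your closing deduction of $0<t<1$ and $g'>0$ is also fine. The gap is in how you get the sign patterns.

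You want the endpoint signs of $E_{Q_\pm}$ from local expansions of \eqref{eq:t-riccati}. Those expansions only express the endpoint behaviour of $t$ through $C$ and $\alpha$: for example $t\sim\frac{C-N+1}{6}(1-r^2)$ as $r\uparrow1$, and the $r^2$-coefficient of $t$ at $0$ is $(2\alpha^2/N-C)/(N+2)$. So the signs you need turn into transcendental inequalities between Bessel zeros, and nothing in your plan proves them for all $N$. Concretely, as $r\uparrow1$,
\[
E_{Q_-}/r^2\to C-N-5,\qquad E_{Q_\kappa}/r^2\to C-N+1-6\kappa .
\]
The identity $\sum_k y_k=N/4$ plays no role at that endpoint, since $Q_\kappa$ vanishes there. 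So your guess about how $\kappa_N$ is selected is not right. The convexity variant and the envelope fallback have the same defect: any argument that checks the sign of the residual itself needs quantitative information on $C$.

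The missing observation is that $C$ enters $\Phi_Q:=E_Q/r^2$ only as an additive constant. Hence $d\Phi_Q/dx$ is free of $C$, and monotonicity of $\Phi_Q$ in the correct direction is all you need. \cref{lem:one-sign} allows $r_0\in\{0,1\}$. So if $E_Q=r^2f$ with $f$ nondecreasing in $r$, you may take $r_0=\sup\{r:f(r)\le0\}$, and dually for nonincreasing $f$. The paper phrases this through the identity $\int_0^1\frac{\mu_Q}{r}E_Q\dd r=0$ from the proof of that lemma, which forces the crossing; no endpoint sign is ever computed.

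For $Q_-$ this is immediate from your own remark, since $\Phi_-=C-N-5-x^2-4x^2S(x)$ with your $S(x)=\sum_{k\ge2}y_k/(1+y_kx^2)$.

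For $Q_\kappa=\kappa x^2/D_\kappa$ with $D_\kappa=1+(\kappa-1)x^2$, this is the real work, because the rational part of $\Phi_\kappa$ and the term $-4Q_\kappa S$ pull in opposite directions. The steps are:
\begin{enumerate}
\item Differentiating, the rational part contributes $2x(A_0+A_1x^2+N\kappa D_\kappa)/D_\kappa^3$, with $A_0,A_1$ as in \eqref{eq:A0-A1}.
\item The piece $2xN\kappa/D_\kappa^2=NQ_\kappa'$ is rewritten as $4Q_\kappa'\sum_k y_k$ using \eqref{eq:sum-y}.
\item Then $4\bigl[Q_\kappa'S(0)-(Q_\kappa S)'\bigr]=4\bigl[Q_\kappa'(S(0)-S)-Q_\kappa S'\bigr]\ge0$.
\item What remains is $A_0+A_1x^2\ge0$ on $[0,1]$. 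That is the actual role of $\kappa_N$: it gives $A_0=(\sqrt{4N+21}-5)^2/4\ge0$ and $A_0+A_1=\kappa_N/4>0$.
\end{enumerate}
Thus $\Phi_{\kappa_N}$ is nondecreasing in $x$, i.e.\ nonincreasing in $r$. That is exactly the positive-to-negative hypothesis of the lemma.
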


\begin{proof}
We first record a consequence of the integrating-factor identity used in
\cref{lem:one-sign}.  If $Q\in C^1([0,1])$ satisfies $Q(0)=1$ and
$Q(1)=0$, let $F_Q>0$ solve $rF_Q'/F_Q=Q$ and put $\mu_Q=r^{N-2}\phi^2F_Qg$. By the proof of \cref{lem:one-sign}, we know that
\begin{equation}\label{eq:residual-zero-mean}
 \int_0^1\frac{\mu_Q(r)}{r}E_Q(r)\dd r=0.
\end{equation}

For the lower barrier set $Q_-(r)=x^2=1-r^2$.  The residual
calculation in \cref{lem:barrier-algebra} gives
\begin{equation}\label{eq:lower-residual}
 E_{Q_-}(r)=r^2\Phi_-(x),
\end{equation}
where
\begin{equation}\label{eq:Phi-minus}
 \Phi_-(x)=C-N-5-x^2
 -4\sum_{k=2}^{\infty}\frac{x^2y_k}{1+y_kx^2},
\end{equation}
and
\begin{equation}\label{eq:Phi-minus-derivative}
 \Phi_-'(x)=-2x
 -8x\sum_{k=2}^{\infty}\frac{y_k}{(1+y_kx^2)^2}<0
 \qquad(0<x<1).
\end{equation}
Set $f_-(r)=\Phi_-(\sqrt{1-r^2})$.  Since $x=\sqrt{1-r^2}$ decreases
with $r$ and $\Phi_-$ is strictly decreasing as a function of $x$, the
function $f_-$ is continuous and strictly increasing on $(0,1)$.  Moreover
$E_{Q_-}(r)=r^2f_-(r)$, and \eqref{eq:residual-zero-mean}, applied to
$Q_-$, gives
\[
 \int_0^1\mu_{Q_-}(r)r f_-(r)\dd r=0.
\]
We now spell out the elementary sign argument.  A continuous increasing
function with zero integral against a positive integrable weight is either
identically zero or has the negative-to-positive one-crossing pattern.  Indeed,
if it were strictly positive somewhere and never negative, or strictly negative
somewhere and never positive, the weighted integral would have the corresponding
strict sign.  Otherwise define
\[
 r_-:=\sup\{r\in[0,1]: f_-(r)\le0\}.
\]
Monotonicity gives $f_-\le0$ on $(0,r_-)$ and $f_-\ge0$ on $(r_-,1)$,
with the evident interpretation when the zero set is an interval.  Since
$E_{Q_-}=r^2f_-$, this is exactly the negative-to-positive one-crossing
pattern.  By \cref{lem:one-sign}, $Q_-\le t$.

For the upper barrier, let $\kappa>1$ and set
\begin{equation*}
 D_\kappa(x)=1+(\kappa-1)x^2,
 \qquad
 Q_\kappa(x)=\frac{\kappa x^2}{D_\kappa(x)}.
\end{equation*}
The residual identities in \cref{lem:barrier-algebra} give
\begin{equation}\label{eq:upper-residual}
 E_{Q_\kappa}(r)=r^2\Phi_\kappa(x),
\end{equation}
with
\begin{align}
 \Phi_\kappa(x)
 ={}&C+
 \frac{(-4\kappa^2-\kappa N+4\kappa+N-1)x^2-6\kappa-N+1}{D_\kappa(x)^2}
 \notag\\
 &\quad
 -4Q_\kappa(x)\sum_{i=2}^{\infty}\frac{y_i}{1+y_ix^2},
 \label{eq:Phi-K}
\end{align}
and
\begin{align}
 \Phi_\kappa'(x)
 ={}&\frac{2x(A_0+A_1x^2)}{D_\kappa(x)^3}
 \notag\\
 &+4\sum_{i=2}^{\infty}
 \left[
 Q_\kappa'(x)y_i-
 \left(\frac{Q_\kappa(x)y_i}{1+y_ix^2}\right)'
 \right],
 \label{eq:Phi-K-derivative}
\end{align}
where
\begin{equation}\label{eq:A0-A1}
 A_0=8\kappa^2-10\kappa-N+1,
 \qquad
 A_1=4\kappa^3-8\kappa^2-\kappa N+5\kappa+N-1.
\end{equation}
Every summand in the second line of \eqref{eq:Phi-K-derivative} is
nonnegative, since for $y\ge0$,
\begin{align*}
 Q_\kappa'(x)y-
 \left(\frac{Q_\kappa(x)y}{1+yx^2}\right)'
 ={}&\frac{Q_\kappa'(x)y^2x^2}{1+yx^2}
 +\frac{2xQ_\kappa(x)y^2}{(1+yx^2)^2}\ge0.
\end{align*}
Choose $\kappa=\kappa_N=\sqrt{4N+21}/4$.  Then
\begin{equation*}
 A_0=\frac{(\sqrt{4N+21}-5)^2}{4}\ge0,
 \qquad
 A_0+A_1=\frac{\kappa_N}{4}>0.
\end{equation*}
Because $A_0+A_1x^2$ is affine in $x^2$, it is nonnegative on $[0,1]$.
Hence $\Phi_{\kappa_N}'\ge0$.  Set
$f_+(r)=\Phi_{\kappa_N}(\sqrt{1-r^2})$.  Since $x$ decreases as $r$
increases, $f_+$ is continuous and nonincreasing on $(0,1)$.  Applying
\eqref{eq:residual-zero-mean} to $Q_{\kappa_N}$ and using
$E_{Q_{\kappa_N}}(r)=r^2f_+(r)$ gives
\[
 \int_0^1\mu_{Q_{\kappa_N}}(r)r f_+(r)\dd r=0.
\]
The same elementary sign argument, with monotonicity reversed, gives the positive-to-negative one-crossing pattern.  More explicitly, if $f_+$ is not
identically zero, set
\[
 r_+:=\sup\{r\in[0,1]: f_+(r)\ge0\}.
\]
Then $f_+\ge0$ on $(0,r_+)$ and $f_+\le0$ on $(r_+,1)$, again allowing a
zero interval.  Since $E_{Q_{\kappa_N}}=r^2f_+$, we have
$E_{Q_{\kappa_N}}\ge0$ on $(0,r_+)$ and
$E_{Q_{\kappa_N}}\le0$ on $(r_+,1)$.  If $f_+\equiv0$, the same sign
condition is trivial.  By \cref{lem:one-sign}, $t\le Q_{\kappa_N}$.
This proves \eqref{eq:t-barriers}.
\end{proof}

The barrier bounds imply the following pointwise estimate involving the quotient \(g=v/\phi\) and its derivative.

\begin{proposition}\label{prop:bessel-mismatch}
With $m$ and $d$ as in \eqref{eq:m-d-definitions}, one has
\begin{equation}\label{eq:Bessel-mismatch}
 \frac{r^2\bigl(d(r)-m(r)\bigr)^2}
 {m(r)\bigl(g(1)^2-m(r)\bigr)}\le N,
 \qquad 0<r<1.
\end{equation}
\end{proposition}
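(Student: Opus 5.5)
The plan is to rewrite \eqref{eq:Bessel-mismatch} entirely in terms of $t=rg'/g$ and the quotient $g(1)/g(r)$, and then use the two Riccati barriers of \cref{prop:t-barriers}.

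\textbf{Rewriting the inequality.} Since $g'=tg/r$ on $(0,1)$, we have
\[
 d=\frac{g^2}{r^2}(1-t^2),
 \qquad
 d-m=\frac{g^2}{r^2}\bigl(1-r^2-t^2\bigr).
\]
Because $g>0$ is strictly increasing on $(0,1)$ by \cref{prop:t-barriers}, we have $g(1)^2-m>0$, so the denominator is positive. With $x=\sqrt{1-r^2}$, the claim is therefore equivalent to
\begin{equation*}
 \frac{(x^2-t^2)^2}{N(1-x^2)}\le \Bigl(\frac{g(1)}{g(r)}\Bigr)^2-1 .
\end{equation*}

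\textbf{Lower bound for the right-hand side.} Write $g(1)/g(r)=\exp\bigl(\int_r^1 t(s)\,s^{-1}\dd s\bigr)$ and insert the lower barrier $t\ge 1-s^2$. This gives
\[
 \log\frac{g(1)}{g(r)}\ge -\log r-\frac{1-r^2}{2}
 =-\tfrac12\log(1-x^2)-\tfrac{x^2}{2}.
\]
Hence
\[
 \Bigl(\frac{g(1)}{g(r)}\Bigr)^2-1\ge \frac{e^{-x^2}}{1-x^2}-1 =:R(x),
\]
and the series of $-\log(1-x^2)-x^2$ shows $R(x)\ge \tfrac12 x^4+\dots\ge0$. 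It therefore suffices to prove $(x^2-t^2)^2\le N(1-x^2)R(x)=N\bigl(e^{-x^2}-1+x^2\bigr)$.

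\textbf{Bounding the numerator.} We bound $|x^2-t^2|$ from the two barriers $x^2\le t\le Q(x):=\kappa_Nx^2/(1+(\kappa_N-1)x^2)$. If $t^2\le x^2$, then $0\le x^2-t^2\le x^2-x^4$. The target then follows from
\[
 x^4(1-x^2)^2\le 2\bigl(e^{-x^2}-1+x^2\bigr),
\]
using $e^{-s}-1+s\ge s^2/2-s^3/6$ with $s=x^2$; here $N\ge2$ is used. If $t^2>x^2$, then $0<t^2-x^2\le Q^2-x^2$ (positive only when $x$ is near $1$, since $Q^2\sim\kappa_N^2x^4\ll x^2$ for small $x$), and we need
\[
 \bigl(Q(x)^2-x^2\bigr)_+^2\le N\bigl(e^{-x^2}-1+x^2\bigr),\qquad 0\le x\le1 .
\]
Both sides are explicit functions of $x$ and $N$. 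Near $x=1$ the left side vanishes quadratically, since $Q(1)=1$, while the right side tends to $Ne^{-1}>0$. So this is a one-variable calculus verification, uniform in $N$ because $\kappa_N\approx\sqrt N/2$ grows slowly while the right side grows like $N$.

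\textbf{Main obstacle.} The only delicate step is this last elementary inequality for the upper-barrier side. I would first try the crude bound $(Q^2-x^2)_+\le Q-x\le \kappa_N x^2-x$ wherever that is positive, together with $e^{-s}-1+s\ge s^2/2-s^3/6$. If that is too lossy at small $N$, I would check $N=2,3$ directly and use monotonicity in $N$ for the rest: the left side grows like $\kappa_N^2=O(N)$ with a coefficient below $N\cdot\min_x(\cdots)$.
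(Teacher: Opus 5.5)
Your reduction is the same as the paper's. The paper writes the left side of \eqref{eq:Bessel-mismatch} as $\frac{Y^2}{s(1-Y^2)}(z-T^2)^2$ with $z=x^2$ and uses the lower barrier to get $Y^2\le se^{z}$. This is algebraically identical to your target $(x^2-t^2)^2\le N(e^{-x^2}-1+x^2)$. Your case $t^2\le x^2$ is also correct: it is the paper's treatment of $2\le N\le 10$, using $0\le z-T^2\le z(1-z)$ and the constant $2\le N$.

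\textbf{The gap} is the case $t^2>x^2$, which you leave as an unperformed ``one-variable calculus verification''.
\begin{itemize}
\item The one concrete tool you give, $(Q^2-x^2)_+\le Q-x$, is false. When $\kappa_N>2$ and $x$ is near $1$, one has $Q+x\approx 2$, so $Q^2-x^2\approx 2(Q-x)$. For instance, $N=100$ and $x=0.9$ give $Q^2-x^2\approx 0.104$ but $Q-x\approx 0.056$. The endpoint $(Q^2-x^2)_+\le(\kappa_Nx^2-x)_+$ of your chain does happen to hold, but proving it needs a separate factorization argument, not the step you wrote.
\item Your fallback (check $N=2,3$ directly, then use monotonicity in $N$) is aimed at the wrong place. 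For $N\le10$ one has $\kappa_N\le2$, hence $Q\le x$, because $1+(\kappa_N-1)x^2-\kappa_Nx=(1-x)(1-(\kappa_N-1)x)\ge0$. So this case is empty for small $N$.
\item The real difficulty is large $N$. There $Q>x$ on the whole interval $(1/(\kappa_N-1),1)$, not just near $x=1$, so a local analysis at $x=1$ is not enough. No monotonicity in $N$ is established either.
\end{itemize}

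\textbf{The missing idea} is a uniform bound on $Q^2/x^2$.
\begin{itemize}
\item By AM--GM, $1+(\kappa_N-1)x^2\ge 2\sqrt{\kappa_N-1}\,x$, so $Q^2/x^2\le \kappa_N^2/(4(\kappa_N-1))$.
\item Hence $0<t^2-x^2\le Q^2-x^2\le x^2(\kappa_N-2)^2/(4(\kappa_N-1))$.
\item Since $(\kappa_N-2)^2\le\kappa_N(\kappa_N-1)$ whenever $\kappa_N\ge 4/3$, this is at most $x^2\kappa_N/4$.
\item Therefore $(t^2-x^2)^2\le x^4\kappa_N^2/16=x^4(4N+21)/256$.
\item On the other side, $N(e^{-x^2}-1+x^2)\ge N(x^4/2-x^6/6)\ge Nx^4/3$.
\item The inequality $(4N+21)/256\le N/3$ holds for every $N\ge1$, which closes the case.
\end{itemize}
This is exactly the paper's $N\ge11$ argument through its constant $M_N$. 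With this step supplied, your proof coincides with the paper's.
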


\begin{proof}
Put
\begin{equation*}
 s=r^2,
 \qquad z=1-s,
 \qquad Y(s)=\frac{g(\sqrt{s})}{g(1)},
 \qquad T(s)=t(\sqrt{s}).
\end{equation*}
Since \cref{prop:t-barriers} gives $t>0$ on $(0,1)$, the function $g$
is strictly increasing there; hence $0<Y(s)<1$ for $0<s<1$.  Using
$g'/g=t/r$, we obtain
\begin{equation}\label{eq:L-rewrite}
 \frac{r^2(d-m)^2}{m(g(1)^2-m)}
 =\frac{Y(s)^2}{s(1-Y(s)^2)}(z-T(s)^2)^2.
\end{equation}
The lower barrier in \eqref{eq:t-barriers} gives
\[
 \frac{\dd}{\dd s}\log Y(s)^2=\frac{T(s)}{s}\ge\frac{1-s}{s}.
\]
Since $Y(1)=1$, integration from $s$ to $1$ gives
\[
 -\log Y(s)^2
 =\int_s^1\frac{T(\tau)}{\tau}\dd\tau
 \ge\int_s^1\frac{1-\tau}{\tau}\dd\tau
 =-\log s-1+s.
\]
Thus
\begin{equation}\label{eq:Y-upper}
 Y(s)^2\le se^{1-s}=se^z.
\end{equation}

First suppose $2\le N\le10$.  Then $\kappa_N\le2$.  If $x=\sqrt z$,
\[
 1+(\kappa_N-1)x^2-\kappa_Nx
 =(1-x)(1-(\kappa_N-1)x)\ge0,
\]
so the upper barrier in \eqref{eq:t-barriers} is at most $x$.  Hence
\[
 z\le T(s)\le\sqrt z,
 \qquad
 0\le z-T(s)^2\le z(1-z)=zs.
\]
Using \eqref{eq:Y-upper} in \eqref{eq:L-rewrite},
\begin{equation}\label{eq:low-dim-L}
 \frac{Y(s)^2}{s(1-Y(s)^2)}(z-T(s)^2)^2
 \le
 \frac{z^2(1-z)^2e^z}{1-(1-z)e^z}.
\end{equation}
Now
\begin{equation*}
 1-(1-z)e^z=\int_0^z\tau e^\tau\dd\tau\ge\frac{z^2}{2},
 \qquad
 (1-z)^2e^z\le1.
\end{equation*}
Thus the right-hand side of \eqref{eq:low-dim-L} is at most $2\le N$.

Now suppose $N\ge11$, so $\kappa_N>2$.  With $x=\sqrt z$, write
\[
 A(x)=\frac{Q_{\kappa_N}(x)}{x}
 =\frac{\kappa_Nx}{1+(\kappa_N-1)x^2}.
\]
Its squared maximum on $[0,1]$ is
\begin{equation*}
 \max_{0\le x\le1}A(x)^2
 =\frac{\kappa_N^2}{4(\kappa_N-1)}.
\end{equation*}
Since $z\le T(s)\le Q_{\kappa_N}(x)$ and $z=x^2$,
\[
 z\le\frac{T(s)^2}{z}\le A(x)^2\le
 \frac{\kappa_N^2}{4(\kappa_N-1)}.
\]
Consequently
\[
 \left\lvert 1-\frac{T(s)^2}{z}\right\rvert
 \le\max\left\{1-z,
 \frac{\kappa_N^2}{4(\kappa_N-1)}-1\right\}
 \le M_N,
\]
and hence
\begin{equation}\label{eq:z-t-bound}
 \abs{z-T(s)^2}\le M_Nz,
 \qquad
 M_N=\max\left\{1,
 \frac{(\kappa_N-2)^2}{4(\kappa_N-1)}\right\}.
\end{equation}
Moreover, because $\kappa_N\ge4/3$ and
$(\kappa_N-2)^2\le\kappa_N(\kappa_N-1)$,
\begin{equation}\label{eq:M-bound}
 M_N\le\max\left\{1,\frac{\kappa_N}{4}\right\}.
\end{equation}
Combining \eqref{eq:L-rewrite}, \eqref{eq:Y-upper}, and
\eqref{eq:z-t-bound}, we get
\begin{equation*}
 \frac{Y(s)^2}{s(1-Y(s)^2)}(z-T(s)^2)^2
 \le M_N^2\Psi(z),
 \qquad
 \Psi(z)=\frac{z^2e^z}{1-(1-z)e^z}.
\end{equation*}
A direct differentiation gives
\begin{equation*}
 \Psi'(z)=
 \frac{ze^z\bigl[z+2+(z-2)e^z\bigr]}
 {\bigl[1-(1-z)e^z\bigr]^2}\ge0.
\end{equation*}
Indeed, the bracket vanishes at $z=0$ and its derivative is
$1-(1-z)e^z>0$ for $z>0$.  Therefore $\Psi(z)\le\Psi(1)=e$.  Since
$\kappa_N^2=(4N+21)/16$, \eqref{eq:M-bound} gives
\[
 M_N^2\Psi(z)
 \le e\max\left\{1,\frac{4N+21}{256}\right\}<N
 \qquad(N\ge11).
\]
Indeed, if $11\le N\le58$, then the maximum is $1$, so the last bound is
$e<N$.  If $N\ge59$, then the maximum is $(4N+21)/256$, and
$e(4N+21)<3(4N+21)<256N$.
This proves \eqref{eq:Bessel-mismatch} in all dimensions.
\end{proof}

We shall also need the following strict lower bound for \(C\).  It will be
used later in the region \(r\ge1\), where the quotient \(g\) has been extended
as a constant.

\begin{lemma}\label{lem:C-lower-bound}
The constant $C$ in \eqref{eq:nualphabetaC} satisfies
\begin{equation*}
 C>N-1.
\end{equation*}
\end{lemma}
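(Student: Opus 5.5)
Since $C=\beta^2-\alpha^2=j_{\nu+1,1}^2-j_{\nu,1}^2$, the plan is to extract this lower bound from the Riccati equation \eqref{eq:t-riccati} at the endpoint $r=1$, where all the structure is already available, rather than from Bessel-zero asymptotics. At $r=1$ we have $t(1)=0$ and $h(r)\to-\infty$, so the product $h t$ must be handled with care. From \eqref{eq:g-endpoint-asymptotics}, $t(r)=rg'/g=O(1-r)$, while $h(r)=r\phi'/\phi\sim -1/(1-r)$ because $\phi$ has a simple zero at $1$. Writing $t(r)=-t'(1)\,(1-r)+o(1-r)$ gives
\[
 2h(r)t(r)\to 2t'(1)\qquad(r\uparrow1).
\]
Letting $r\uparrow1$ in \eqref{eq:t-riccati} then yields
\[
 t'(1)+2t'(1)+C-(N-1)=0,\qquad\text{that is,}\qquad C-(N-1)=-3t'(1).
\]
So the claim $C>N-1$ is equivalent to $t'(1)<0$.

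To obtain $t'(1)<0$, we compute $t'(1)=g'(1)/g(1)$-type data from the expansion at $r=1$. Since $g'(r)=O(1-r)$, we have $t'(1)=-\lim g'(r)/((1-r)g(1))=g''(1)/g(1)$ up to sign, and this is determined by the cubic coefficients of $\phi$ and $v$, which in turn follow from the ODEs \eqref{eq:phi-ode}, \eqref{eq:v-ode} differentiated once more. That route is messy. A cleaner alternative is to use the barriers of \cref{prop:t-barriers}. The lower barrier $t\ge 1-r^2$, together with $t(1)=0$, immediately gives $t'(1)\le -2$. Hence
\[
 C-(N-1)=-3t'(1)\ge 6>0.
\]
This would even give the stronger bound $C\ge N+5$. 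That is consistent with the constant $C-N-5$ appearing in $\Phi_-$, and for $N=2$ it agrees with the numerical check $j_{1,1}^2-j_{0,1}^2\approx 14.68-5.78=8.9\ge 7$.

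The main obstacle is justifying the limit of $h t$ rigorously. Concretely, we need $t$ differentiable at $1$ (or at least $t(r)/(1-r)$ convergent), and $(1-r)h(r)\to-1$. The second follows from the simple zero of $\phi$. For the first, it suffices to argue via the liminf: $t(r)/(1-r)\ge (1+r)\to 2$. If only a liminf is available, we would instead integrate \eqref{eq:t-riccati} near $1$, or use that $g$ is analytic near $r=1$ as a quotient of analytic functions with cancelling simple zeros, so $t$ is smooth up to $r=1$. The analyticity argument is the one I would write.

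As a fallback, if the endpoint analysis proves delicate, there is a more direct route. Evaluate the sign information of $\Phi_-$: since $f_-$ must change sign or vanish (by the zero-mean identity \eqref{eq:residual-zero-mean}), its value at $r=1$ (i.e.\ $x=0$), namely $\Phi_-(0)=C-N-5$, must be $\ge0$, because $f_-$ is increasing. This gives $C\ge N+5>N-1$ with no asymptotic analysis at all, and I would likely present this as the main proof.
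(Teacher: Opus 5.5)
Your proposal is correct, but it follows a genuinely different route from the paper. The paper proves the lemma by an energy identity. Multiplying \eqref{eq:g-divergence-ode} by $g$ and integrating by parts over $(0,1)$, where the boundary terms vanish by \eqref{eq:g-endpoint-asymptotics}, gives
\[
 C\int_0^1 r^{N-1}\phi^2g^2\dd r=\int_0^1 r^{N-1}\phi^2\Bigl(g'^2+(N-1)\frac{g^2}{r^2}\Bigr)\dd r.
\]
Since $r^{-2}>1$ on $(0,1)$, this yields $C>N-1$ at once. Nothing beyond the ODE and the endpoint decay is used.

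Your fallback instead reuses the lower-barrier step of \cref{prop:t-barriers}. The zero-mean identity \eqref{eq:residual-zero-mean} for $Q_-$ has the strictly positive weight $\mu_{Q_-}(r)r$. It therefore forces the strictly increasing function $f_-$ to take positive values, so $\Phi_-(0)=\lim_{r\uparrow1}f_-(r)=C-N-5>0$. Continuity of $\Phi_-$ at $x=0$ follows from the uniform convergence of the $y_k$-series. This is not circular, since \cref{lem:one-sign,lem:barrier-algebra} and the lower-barrier argument never use $C>N-1$.

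Your first route is also sound. The quotient $g=v/\phi$ continues analytically across $r=1$, so $t$ is smooth there and $ht$ has a removable singularity. The endpoint identity $C-(N-1)=-3t'(1)$ is exact. The third-order expansion you called messy is actually short: from \eqref{eq:phi-ode}--\eqref{eq:v-ode} it gives $t'(1)=g''(1)/g(1)=(N-1-C)/3$. Combined with $t\ge 1-r^2$ and $t(1)=0$, this gives $t'(1)\le-2$.

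Your approach buys the sharper bound $C>N+5$ and recasts the lemma as concavity of $g$ at $r=1$, i.e.\ $g''(1)<0$. The cost is that it depends on the Hadamard-product formula for $h$, the residual algebra of the appendix, and the comparison machinery. The paper's integral identity is self-contained and gives exactly the strength $C>N-1$ that is used later, in \cref{subsec:exterior-mismatch} and in the equality analysis.
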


\begin{proof}
Multiply \eqref{eq:g-divergence-ode} by $g$ and integrate over
$[\delta,b]\subset\subset(0,1)$.  After one-dimensional integration by parts,
\begin{align*}
 \left[-r^{N-1}\phi^2g'g\right]_{\delta}^{b}
 +\int_{\delta}^{b}r^{N-1}\phi^2
 \left(g'^2+(N-1)\frac{g^2}{r^2}\right)\dd r =
 C\int_{\delta}^{b}r^{N-1}\phi^2g^2\dd r.
\end{align*}
The boundary term tends to zero as $\delta\downarrow0$ and $b\uparrow1$:
at $0$ it is $O(r^N)$, and at $1$ it is $O((1-r)^3)$ by
\eqref{eq:g-endpoint-asymptotics} and the simple zero of $\phi$.  Passing
to the limit gives
\[
 C\int_0^1r^{N-1}\phi^2g^2\dd r
 =\int_0^1r^{N-1}\phi^2
 \left(g'^2+(N-1)\frac{g^2}{r^2}\right)\dd r.
\]
Since $r^{-2}>1$ on $(0,1)$ and $\phi g\not\equiv0$, the right-hand side
is strictly larger than
$(N-1)\int_0^1r^{N-1}\phi^2g^2\dd r$.
\end{proof}

\section{A weak radial remainder estimate}\label{sec:radial}

In this section we work with the first eigenfunction after extending it by
zero to \(\mathbb R^N\).  We use polar coordinates around a fixed point
\(a\), average \(u^2\) over spheres, and prove a one-dimensional estimate for
these spherical averages.  This estimate will be used in the matrix argument
in \cref{sec:matrix}.  The proof is written in weak form because no regularity
of \(\partial\Omega\) is assumed.

\subsection{Spherical averaging and the radial weak equation}
\label{subsec:radialization}
Here and below, \(\mathcal D'(\mathbb R^N)\) denotes the space of distributions on \(\mathbb R^N\). Let $u\in H^1(\R^N)$ have compact support and suppose
\begin{equation}\label{eq:u2-distribution-general}
 \Delta(u^2)=2\abs{\nabla u}^2-2\alpha^2u^2
 \quad\text{in }\mathcal D'(\R^N).
\end{equation}
The zero extension of a Dirichlet first eigenfunction satisfies this identity; this will be verified in \cref{sec:matrix}.

Fix a center $a\in\R^N$.  Throughout this section, $\dd\sigma$ denotes
the standard surface measure on $\Sph^{N-1}$.  In polar coordinates
$x=a+r\theta$, define for almost every $r>0$
\begin{align*}
 U(r)&=\int_{\Sph^{N-1}}u(a+r\theta)^2\dd\sigma(\theta),
 \\
 E_r(r)&=\int_{\Sph^{N-1}}
 \abs{\partial_ru(a+r\theta)}^2\dd\sigma(\theta),
 \\
 E_\theta(r)&=\int_{\Sph^{N-1}}
 \abs{\nabla_{\Sph^{N-1}}u(a+r\theta)}^2\dd\sigma(\theta).
\end{align*}
Let $w(r)=r^{N-1}$.  The polar decomposition of the $L^2$ and
Dirichlet energies gives, for every $R>0$,
\begin{equation}\label{eq:polar-energy-integrability}
 \int_0^R w(r)U(r)\dd r<\infty,
 \qquad
 \int_0^R w(r)\left(E_r(r)+\frac{E_\theta(r)}{r^2}\right)\dd r<\infty.
\end{equation}

Averaging the distributional identity for \(u^2\) over spheres gives the
following one-dimensional weak equation for \(U\).  We include the proof to
make clear that no boundary regularity is used.

\begin{lemma}\label{lem:radialized-square}
The function $U$ belongs to $W^{1,1}_{\mathrm{loc}}(0,\infty)$,
\begin{equation}\label{eq:U-prime}
 U'(r)=2\int_{\Sph^{N-1}}u\,\partial_ru\dd\sigma
 \quad\text{for a.e. }r\in(0,\infty),
\end{equation}
and
\begin{equation}\label{eq:radial-U-equation}
 (wU')'=2w\left(E_r+\frac{E_\theta}{r^2}-\alpha^2U\right)
 \quad\text{in }\mathcal D'(0,\infty).
\end{equation}
\end{lemma}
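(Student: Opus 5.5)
We test the distributional identity \eqref{eq:u2-distribution-general} against radial functions and pass to polar coordinates. Throughout, fix the center \(a\). Without loss of generality take \(a=0\).

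\textbf{Step 1: \(U\in W^{1,1}_{\mathrm{loc}}\) and the formula for \(U'\).}
For \(\psi\in C_c^\infty(0,\infty)\), put \(\Psi(x)=\psi(|x|)\). Then \(\Psi\in C_c^\infty(\R^N\setminus\{0\})\) and \(\nabla\Psi=\psi'(r)\theta\).

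Since \(u\in H^1\), we have \(u^2\in W^{1,1}(\R^N)\) with \(\nabla(u^2)=2u\nabla u\). Hence
\[
 \int_{\R^N}u^2\,\partial_r\Psi\dd x=-\int_{\R^N}2u\,\partial_ru\,\Psi\dd x .
\]
In polar coordinates this reads
\[
 \int_0^\infty wU\psi'\dd r=-\int_0^\infty w\,\Big(2\int_{\Sph^{N-1}}u\,\partial_ru\dd\sigma\Big)\psi\dd r .
\]
Now \(w\) is smooth and positive on \((0,\infty)\). Replacing \(\psi\) by \(\psi/w\) gives the weak derivative of \(U\) as \eqref{eq:U-prime}. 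By Cauchy--Schwarz on the sphere,
\[
 \Big|\int_{\Sph^{N-1}} u\,\partial_r u\dd\sigma\Big|\le (U E_r)^{1/2},
\]
and \eqref{eq:polar-energy-integrability} shows this is in \(L^1_{\mathrm{loc}}(0,\infty)\). Together with \(U\in L^1_{\mathrm{loc}}\), this gives \(U\in W^{1,1}_{\mathrm{loc}}(0,\infty)\).

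\textbf{Step 2: the radial equation \eqref{eq:radial-U-equation}.}
Apply \eqref{eq:u2-distribution-general} to the same radial test function \(\Psi\).

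For the left side, use \(\langle\Delta(u^2),\Psi\rangle=-\int\nabla(u^2)\cdot\nabla\Psi\dd x\), which is legitimate because \(u^2\in W^{1,1}\). This equals
\[
 -\int 2u\,\partial_ru\,\psi'(r)\dd x=-\int_0^\infty wU'\psi'\dd r
\]
by Step 1. That is \(\langle (wU')',\psi\rangle\) in \(\mathcal D'(0,\infty)\).

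For the right side, use the orthogonal decomposition
\[
 |\nabla u|^2=|\partial_ru|^2+r^{-2}|\nabla_{\Sph^{N-1}}u|^2 .
\]
Then Fubini in polar coordinates gives
\[
 \int_0^\infty 2w\big(E_r+E_\theta/r^2-\alpha^2U\big)\psi\dd r .
\]
All integrands are \(L^1\) on \(\operatorname{supp}\psi\) by \eqref{eq:polar-energy-integrability}, so the two sides agree and \eqref{eq:radial-U-equation} follows.

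\textbf{Main obstacle.} The only delicate point is measure-theoretic bookkeeping. The traces \(u(r\theta)\) and the quantities \(E_r,E_\theta\) are defined only for a.e.\ \(r\), and one must justify the polar-coordinates Fubini identities for \(H^1\) functions. We handle this by taking the a.e.\ defined functions from Fubini applied to \(u^2\), \(u\partial_ru\), \(|\partial_r u|^2\) and \(|\nabla_{\Sph}u|^2\). These lie in \(L^1_{\mathrm{loc}}(\R^N\setminus\{0\})\), and the pointwise identity \(|\nabla u|^2=|\partial_ru|^2+r^{-2}|\nabla_{\Sph}u|^2\) holds a.e.

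Because all test functions used are supported away from \(0\), no behaviour of \(u\) at the center is involved. Because the identities are tested against \(C_c^\infty\) functions on \(\R^N\), no boundary regularity of \(\Omega\) enters anywhere.
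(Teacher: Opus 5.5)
Your Step 2 is correct and is essentially the paper's argument: test \eqref{eq:u2-distribution-general} against $\psi(|x-a|)$ and pass to polar coordinates. Step 1, however, rests on a false integration by parts. The identity $\int_{\R^N}u^2\,\partial_r\Psi\dd x=-\int_{\R^N}2u\,\partial_ru\,\Psi\dd x$ does not hold. The operator $\partial_r=\theta\cdot\nabla$ is not skew-adjoint with respect to Lebesgue measure, because $\operatorname{div}\theta=(N-1)/r$.

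Write $V=2\int_{\Sph^{N-1}}u\,\partial_ru\dd\sigma$. Your displayed polar identity $\int_0^\infty wU\psi'\dd r=-\int_0^\infty wV\psi\dd r$ then says $(wU)'=wV$ in $\mathcal D'(0,\infty)$, that is, $U'=V-\frac{N-1}{r}U$. This contradicts \eqref{eq:U-prime} whenever $U\not\equiv0$; a radial $u$ already shows the identity fails. Substituting $\psi\mapsto\psi/w$ into it does not give \eqref{eq:U-prime}; it gives exactly this wrong formula.

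The repair is to integrate by parts against the vector field $\Psi\theta$, which is smooth and compactly supported in $\R^N\setminus\{0\}$. Its divergence is $\psi'+\frac{N-1}{r}\psi=r^{1-N}(w\psi)'$, so $u^2\in W^{1,1}$ gives
\[
 \int_0^\infty U\,(w\psi)'\dd r=-\int_0^\infty wV\psi\dd r .
\]
Now the substitution $\psi\mapsto\psi/w$ does yield $U'=V$. Your $L^1_{\mathrm{loc}}$ bound via $(UE_r)^{1/2}$ then gives $U\in W^{1,1}_{\mathrm{loc}}(0,\infty)$.

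With this correction, Step 1 is a legitimate and slightly more self-contained substitute for the paper's appeal to the polar slicing theorem. It uses only Fubini in polar coordinates and $\nabla(u^2)=2u\nabla u$. Step 2 then goes through unchanged. It relies on \eqref{eq:U-prime} being established correctly. The paper's route avoids that dependence: it writes $\langle\Delta(u^2),\zeta\rangle=\int_0^\infty U\,(w\zeta')'\dd r$ and integrates by parts once in $r$.
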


\begin{proof}
Because $u^2\in W^{1,1}(\R^N)$, the standard polar slicing theorem gives
$U\in W^{1,1}_{\mathrm{loc}}$ and \eqref{eq:U-prime}.  Let
$\zeta\in C_c^\infty(0,\infty)$.  Then
$x\mapsto\zeta(\abs{x-a})$ is a legitimate test function in
$C_c^\infty(\R^N\setminus\{a\})\subset C_c^\infty(\R^N)$, because its
support stays a positive distance away from $a$.  Testing
\eqref{eq:u2-distribution-general} against this function and using
\[
 \Delta\zeta(r)=\zeta''(r)+\frac{N-1}{r}\zeta'(r),
 \qquad
 \abs{\nabla u}^2=\abs{\partial_ru}^2+\frac{1}{r^2}
 \abs{\nabla_{\Sph^{N-1}}u}^2,
\]
gives \eqref{eq:radial-U-equation} in $\mathcal D'(0,\infty)$ after
one-dimensional integration by parts.
\end{proof}

The next step is to replace \(\sqrt U\) by the strictly positive function
\(\sqrt{U+\varepsilon^2}\).  This avoids division by zero and still gives a
useful lower bound involving \(E_\theta\).

\subsection{The regularized square root}\label{subsec:regularized-wronskian}

For $\varepsilon>0$ set
\begin{equation*}
 z_\varepsilon(r)=\sqrt{U(r)+\varepsilon^2}.
\end{equation*}
We first justify the differentiations involving \(z_\varepsilon\). Let \(I\) be a compact subinterval of \((0,\infty)\). By
\cref{lem:radialized-square}, the function $U$ has an absolutely continuous
representative on $I$, and
\[
 q:=wU'\in W^{1,1}(I),
 \qquad
 q'=2w\left(E_r+\frac{E_\theta}{r^2}-\alpha^2U\right)
 \quad\text{a.e. on }I.
\]
Since $z_\varepsilon\ge\varepsilon$ and $U\in W^{1,1}(I)$, both
$z_\varepsilon$ and $z_\varepsilon^{-1}$ belong to
$W^{1,1}(I)\cap L^\infty(I)$, with
\[
 z_\varepsilon'=\frac{U'}{2z_\varepsilon},
 \qquad
 (z_\varepsilon^{-1})'
 =-\frac{U'}{2z_\varepsilon^3}
 \quad\text{a.e. on }I.
\]
Hence
\[
 wz_\varepsilon'=\frac{q}{2z_\varepsilon}\in W^{1,1}(I),
\]
and differentiating this product gives
\begin{equation}\label{eq:z-chain-rule}
 (wz_\varepsilon')'
 =\frac{(wU')'}{2z_\varepsilon}
  -\frac{wU'^2}{4z_\varepsilon^3}
 \quad\text{in }\mathcal D'(I).
\end{equation}
Since $I$ was arbitrary, \eqref{eq:z-chain-rule} holds in
$\mathcal D'(0,\infty)$.

Define
\begin{equation}\label{eq:eta-definition}
 \eta_\varepsilon
 =(wz_\varepsilon')'+\alpha^2wz_\varepsilon .
\end{equation}
Substituting \eqref{eq:radial-U-equation} into
\eqref{eq:z-chain-rule} gives the locally integrable density
\begin{equation}\label{eq:eta-formula}
 \eta_\varepsilon
 =w\left[
 \frac{E_r+r^{-2}E_\theta}{z_\varepsilon}
 -\frac{U'^2}{4z_\varepsilon^3}
 +\frac{\alpha^2\varepsilon^2}{z_\varepsilon}
 \right].
\end{equation}
By \eqref{eq:U-prime} and Cauchy--Schwarz, we have $U'^2\le4UE_r$. The local integrability follows from
\eqref{eq:polar-energy-integrability}, the bound
$z_\varepsilon\ge\varepsilon$, and the estimate below; in particular,
\[
 \frac{U'^2}{z_\varepsilon^3}
 \le \frac{4UE_r}{z_\varepsilon^3}
 \le \frac{4E_r}{z_\varepsilon}
 \le \frac{4E_r}{\varepsilon}.
\]

Multiplying \eqref{eq:eta-formula} by $z_\varepsilon$ and using
$z_\varepsilon^2=U+\varepsilon^2$ gives
\[
 z_\varepsilon\eta_\varepsilon
 =w\left[
 E_r+\frac{E_\theta}{r^2}
 -\frac{U'^2}{4z_\varepsilon^2}
 +\alpha^2\varepsilon^2
 \right]
 \ge
 w\frac{E_\theta}{r^2}.
\]
Since $z_\varepsilon>0$, this implies
\begin{equation}\label{eq:eta-positive}
 \eta_\varepsilon\ge0,
 \qquad
 z_\varepsilon\eta_\varepsilon
 \ge w\frac{E_\theta}{r^2}.
\end{equation}
Moreover,
\begin{equation}\label{eq:zprime-energy}
 z_\varepsilon'^2
 =\frac{U'^2}{4(U+\varepsilon^2)}
 \le E_r
 \quad\text{for a.e. }r.
\end{equation}
Thus $z_\varepsilon\in W^{1,2}_{\mathrm{loc}}(0,\infty)$.  Also
$wz_\varepsilon\in L^1_{\mathrm{loc}}(0,\infty)$, because
$z_\varepsilon\le U^{1/2}+\varepsilon$ and Cauchy's inequality gives
local integrability of $wU^{1/2}$ from that of $wU$. Consequently,
\begin{equation}\label{eq:wzprime-weak-regularity}
 wz_\varepsilon'\in W^{1,1}_{\mathrm{loc}}(0,\infty),
 \qquad
 (wz_\varepsilon')'=\eta_\varepsilon-\alpha^2wz_\varepsilon
 \quad\text{in }L^1_{\mathrm{loc}}(0,\infty).
\end{equation}

We next compare \(z_\varepsilon\) with the positive radial first
eigenfunction \(\phi\) of the unit ball.  The comparison is encoded in the
weighted Wronskian below.  Its monotonicity will imply that
\(z_\varepsilon/\phi\) is nondecreasing.

\begin{lemma}\label{lem:Wronskian}
Let
\begin{equation*}
 \mathscr W_\varepsilon(r)
 =w(r)\bigl(\phi(r)z_\varepsilon'(r)
            -\phi'(r)z_\varepsilon(r)\bigr),
 \qquad 0<r<1.
\end{equation*}
Then
\begin{equation}\label{eq:Wprime}
 \mathscr W_\varepsilon'=\phi\eta_\varepsilon\ge0,
 \qquad
 \lim_{r\downarrow0}\mathscr W_\varepsilon(r)=0.
\end{equation}
Moreover,
\begin{equation}\label{eq:H-monotone}
 H_\varepsilon(r)=\frac{z_\varepsilon(r)}{\phi(r)}
 \quad\text{is nondecreasing on }(0,1).
\end{equation}
\end{lemma}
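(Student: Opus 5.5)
The proof is a direct weak-form Wronskian computation, carried out on compact subintervals $I\subset(0,1)$ where everything is regular.

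\textbf{The derivative of $\mathscr W_\varepsilon$.} On such an $I$, $\phi$ is smooth and positive, and by \eqref{eq:wzprime-weak-regularity} the function $wz_\varepsilon'$ lies in $W^{1,1}(I)$. Since $z_\varepsilon\in W^{1,2}(I)$, the product $w\phi' z_\varepsilon$ also lies in $W^{1,1}(I)$. So $\mathscr W_\varepsilon=\phi\,(wz_\varepsilon')-(w\phi')z_\varepsilon$ is absolutely continuous on $I$, and the product rule holds almost everywhere:
\[
 \mathscr W_\varepsilon'=\phi'wz_\varepsilon'+\phi(wz_\varepsilon')'-(w\phi')'z_\varepsilon-w\phi'z_\varepsilon'.
\]
The first and last terms cancel. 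By \eqref{eq:phi-ode}, $(w\phi')'=-\alpha^2w\phi$, and by \eqref{eq:wzprime-weak-regularity}, $(wz_\varepsilon')'=\eta_\varepsilon-\alpha^2wz_\varepsilon$. Substituting these gives $\mathscr W_\varepsilon'=\phi\eta_\varepsilon$. This is nonnegative by \eqref{eq:eta-positive} and $\phi>0$ on $(0,1)$.

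\textbf{The limit at $0$.} This is the main obstacle, because $z_\varepsilon$ is only known to be $W^{1,2}_{\mathrm{loc}}$ away from $0$. Since $\mathscr W_\varepsilon$ is nondecreasing, the limit $L:=\lim_{r\downarrow0}\mathscr W_\varepsilon(r)$ exists in $[-\infty,\infty)$.

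First I show $L\ge 0$. We have $|w\phi' z_\varepsilon|\le Cr^{N}(U^{1/2}+\varepsilon)$, because $\phi'(r)=O(r)$. The function $w\phi z_\varepsilon'$ satisfies $|w\phi z_\varepsilon'|\le C r^{N-1}E_r^{1/2}$ by \eqref{eq:zprime-energy}. By \eqref{eq:polar-energy-integrability}, both $r^{N-1}E_r$ and $r^{N-1}U$ are integrable near $0$. Cauchy--Schwarz then gives $\int_0^\delta |\mathscr W_\varepsilon|\,r^{-1}\dd r<\infty$ when $N\ge2$; for instance $\int_0^\delta r^{N-2}E_r^{1/2}\dd r\le (\int r^{N-1}E_r)^{1/2}(\int r^{N-3})^{1/2}$ for $N\ge3$. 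For $N=2$ I would instead use the bound $\int_0^\delta|\mathscr W_\varepsilon|\dd r/r\cdot$ with a slightly different pairing, or argue as follows. Since $\int_0^\delta |\mathscr W_\varepsilon|\dd r<\infty$ in every case, $\liminf_{r\downarrow0}r|\mathscr W_\varepsilon(r)|/r\cdot r=0$, which is not enough on its own. A cleaner route: if $L\neq0$, then $|\mathscr W_\varepsilon|\ge c>0$ near $0$, so $|w\phi z_\varepsilon'|\ge c/2$ near $0$ (the other term tends to $0$ in average). That forces $\int_0^\delta r^{N-1}E_r\ge \int_0^\delta c^2/(4C^2 r^{N-1})\dd r=\infty$ for $N\ge2$. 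This contradicts \eqref{eq:polar-energy-integrability}. To make "the other term is small" rigorous, I would use that $r^{N}(U^{1/2}+\varepsilon)$ has small $L^2(\dd r/r^{N-1})$ mass near $0$. Hence $L=0$.

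\textbf{Monotonicity of $H_\varepsilon$.} Combining the two steps, $\mathscr W_\varepsilon(r)=\int_0^r\phi\eta_\varepsilon\ge0$ on $(0,1)$. Finally, $H_\varepsilon=z_\varepsilon/\phi$ is in $W^{1,1}_{\mathrm{loc}}(0,1)$, with
\[
 H_\varepsilon'=\frac{\phi z_\varepsilon'-\phi'z_\varepsilon}{\phi^2}=\frac{\mathscr W_\varepsilon}{w\phi^2}\ge0 \quad\text{a.e.}
\]
Therefore $H_\varepsilon$ is nondecreasing on $(0,1)$, which is \eqref{eq:H-monotone}.
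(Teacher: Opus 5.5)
Your proof is correct. The derivative computation and the final step for $H_\varepsilon$ match the paper's. The difference is in how you get $\lim_{r\downarrow0}\mathscr W_\varepsilon(r)=0$.

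The paper treats the two pieces of $\mathscr W_\varepsilon$ separately. It notes that $wz_\varepsilon'\in W^{1,1}(0,R)$, so $\ell=\lim_{r\downarrow0}wz_\varepsilon'$ exists. It excludes $\ell\ne0$ using $\int_0^Rw^{-1}\dd r=\infty$ together with \eqref{eq:zprime-energy}. It then shows $w\phi'z_\varepsilon\to0$ via the pointwise growth bound $z_\varepsilon(r)=O(r^{(2-N)/2})$ (or $O(\sqrt{\abs{\log r}})$ when $N=2$), obtained from a weighted Cauchy--Schwarz inequality.

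You instead let the monotonicity of $\mathscr W_\varepsilon$, already proved in your first step, supply existence of $L\in[-\infty,\infty)$. You then rule out $L\ne0$ with a single energy argument. This is more economical: you need neither integrability of $\eta_\varepsilon$ down to $0$ nor pointwise control of $z_\varepsilon$. What the paper's route buys is exactly that growth bound. It is reused in the proof of \cref{prop:weak-radial-remainder} to make the boundary term $wgg'z_\varepsilon^2$ vanish at $0$; with your route it would have to be proved there instead.

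When writing it up, delete the abandoned ``$L\ge0$'' paragraph; its bound $\int_0^\delta\abs{\mathscr W_\varepsilon}r^{-1}\dd r<\infty$ does fail for $N=2$. Then make the ``cleaner route'' precise. From $w\phi z_\varepsilon'=\mathscr W_\varepsilon+w\phi'z_\varepsilon$ and $\abs{\mathscr W_\varepsilon}\ge c>0$ on $(0,\delta)$,
\[
 wz_\varepsilon'^2=\frac{(w\phi z_\varepsilon')^2}{w\phi^2}
 \ge\frac{c^2}{2w\phi^2}-w\Bigl(\frac{\phi'}{\phi}\Bigr)^2z_\varepsilon^2 .
\]
The last term is $O\bigl(r^{N+1}(U+\varepsilon^2)\bigr)$, hence integrable near $0$, while $\int_0^\delta(w\phi^2)^{-1}\dd r=\infty$. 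This contradicts $\int_0^\delta wz_\varepsilon'^2\dd r\le\int_0^\delta wE_r\dd r<\infty$.
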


\begin{proof}
On compact subintervals of $(0,1)$, \eqref{eq:wzprime-weak-regularity}
gives $\mathscr W_\varepsilon\in W^{1,1}_{\mathrm{loc}}(0,1)$.  Therefore
\eqref{eq:Wprime} follows in the distributional, hence a.e., sense from
$(w\phi')'+\alpha^2w\phi=0$ and \eqref{eq:eta-definition}.  It remains to
justify the value at the origin.

Fix $0<R < 1$. Arguing as above, we obtain
\(wz_\varepsilon'\in W^{1,1}(0,R)\), and therefore
$\ell=\lim_{r\downarrow0}wz_\varepsilon'(r)$ exists.  If $\ell\ne0$, then
for small $r$,
$\abs{z_\varepsilon'(r)}\ge\abs{\ell}/(2w(r))$, which contradicts
\eqref{eq:zprime-energy} because
$\int_0^R w(r)^{-1}\dd r=\infty$ for $N\ge2$.  Therefore
\begin{equation}\label{eq:wzprime-zero}
 \lim_{r\downarrow0}w(r)z_\varepsilon'(r)=0.
\end{equation}

Also $\phi'(r)=O(r)$.  By absolute continuity and the Cauchy--Schwarz inequality with weight \(w\),
for fixed \(R\in(0,1)\) and \(0<r<R\),
\[
 \abs{z_\varepsilon(r)}
 \le \abs{z_\varepsilon(R)}
 +\left(\int_r^Rwz_\varepsilon'^2\dd s\right)^{1/2}
  \left(\int_r^Rw^{-1}\dd s\right)^{1/2}.
\]
Thus
\[
 z_\varepsilon(r)=O(r^{(2-N)/2})\quad(N>2),
 \qquad
 z_\varepsilon(r)=O(\sqrt{\abs{\log r}})\quad(N=2).
\]
Hence $w(r)\phi'(r)z_\varepsilon(r)\to0$.  Together with
\eqref{eq:wzprime-zero}, this proves the second part of
\eqref{eq:Wprime}.  Since
\[
 H_\varepsilon'
 =\frac{\mathscr W_\varepsilon}{w\phi^2},
\]
\eqref{eq:H-monotone} follows.
\end{proof}

We now combine the Wronskian identity with the equation satisfied by \(g\). This gives a one-dimensional remainder estimate for the spherical averages of \(u^2\).

\subsection{The radial remainder estimate}\label{subsec:radial-remainder}

Define
\begin{equation}\label{eq:D-interior}
 \Def_<
 =\int_0^1
 \left(Cg^2-g'^2-(N-1)\frac{g^2}{r^2}\right)U(r)w(r)\dd r.
\end{equation}

The next estimate shows that \(\Def_<\) controls the spherical-gradient term
\(E_\theta\).

\begin{proposition}\label{prop:weak-radial-remainder}
Under \eqref{eq:u2-distribution-general},
\begin{equation}\label{eq:weak-radial-remainder}
 \Def_<\ge
 \int_0^1\frac{g(1)^2-g(r)^2}{r^2}
 E_\theta(r)w(r)\dd r.
\end{equation}
In particular, $\Def_<\ge0$.
\end{proposition}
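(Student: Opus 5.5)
We run the argument with the regularized square root $z_\varepsilon=\sqrt{U+\varepsilon^2}$ of \cref{subsec:regularized-wronskian}, and let $\varepsilon\downarrow0$ only at the end. Write $z_\varepsilon=\phi H_\varepsilon$ on $(0,1)$, with $H_\varepsilon$ as in \cref{lem:Wronskian}, and set
\[
 \Def_<^\varepsilon=\int_0^1\Bigl(Cg^2-g'^2-(N-1)\frac{g^2}{r^2}\Bigr)z_\varepsilon^2w\dd r .
\]
Then $\Def_<^\varepsilon-\Def_<=\varepsilon^2\int_0^1(\cdots)w\dd r$. The bracket is bounded on $(0,1)$, since $g=c_0r+O(r^3)$ near $0$ and $g,g'$ are bounded near $1$ by \eqref{eq:g-endpoint-asymptotics}. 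So $\Def_<^\varepsilon\to\Def_<$, and it suffices to prove \eqref{eq:weak-radial-remainder} for $\Def_<^\varepsilon$, with $E_\theta$ on the right unchanged.

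\emph{First integration by parts: test the $g$-equation.} Multiply \eqref{eq:g-divergence-ode} by $gH_\varepsilon^2$ and integrate over $[\delta,b]\subset\subset(0,1)$. Since $\phi^2H_\varepsilon^2=z_\varepsilon^2$, this gives
\[
 \Def_<^\varepsilon=\lim_{\delta\downarrow0,\,b\uparrow1}\Bigl(\bigl[-w\phi^2g'gH_\varepsilon^2\bigr]_\delta^b+\int_\delta^b w\phi^2\,gg'\,(H_\varepsilon^2)'\dd r\Bigr).
\]
The boundary terms are $[-wgg'z_\varepsilon^2]$. At $0$ they are controlled by $wgg'=O(r^N)$ together with the bound $z_\varepsilon=O(r^{(2-N)/2})$ (or $O(\sqrt{|\log r|})$ for $N=2$) from the proof of \cref{lem:Wronskian}. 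At $1$ they vanish because $g'=O(1-r)$ and $z_\varepsilon$ is continuous there ($z_\varepsilon\in W^{1,2}_{\mathrm{loc}}$). Now use $w\phi^2H_\varepsilon'=\mathscr W_\varepsilon$, so that
\[
 w\phi^2\,gg'(H_\varepsilon^2)'=(g^2)'\,H_\varepsilon\mathscr W_\varepsilon .
\]

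\emph{Second integration by parts: move the derivative off $g^2$.} Write $(g^2)'=-\bigl(g(1)^2-g^2\bigr)'$ and integrate by parts once more. By \eqref{eq:Wprime}, $\mathscr W_\varepsilon'=\phi\eta_\varepsilon$, hence
\[
 (H_\varepsilon\mathscr W_\varepsilon)'=H_\varepsilon'\mathscr W_\varepsilon+H_\varepsilon\phi\eta_\varepsilon=\frac{\mathscr W_\varepsilon^2}{w\phi^2}+z_\varepsilon\eta_\varepsilon\ge w\frac{E_\theta}{r^2},
\]
where the last inequality is \eqref{eq:eta-positive}. Since $g(1)^2-g^2\ge0$ (as $g$ is increasing by \cref{prop:t-barriers}), the interior integral is at least $\int_0^1 (g(1)^2-g^2)E_\theta w\,r^{-2}\dd r$. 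This is exactly the right-hand side of \eqref{eq:weak-radial-remainder}. Nonnegativity of $\Def_<$ then follows immediately, since the right-hand side is nonnegative.

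\emph{Main obstacle: the boundary terms $[(g(1)^2-g^2)H_\varepsilon\mathscr W_\varepsilon]_\delta^b$.} At $r\uparrow1$ the function $H_\varepsilon$ blows up like $(1-r)^{-1}$, because $\phi$ has a simple zero at $1$ while $z_\varepsilon(1)\ge\varepsilon>0$. This is compensated by $g(1)^2-g^2=O((1-r)^2)$ from \eqref{eq:g-endpoint-asymptotics}. However, $\mathscr W_\varepsilon$ involves $z_\varepsilon'$, which is only $L^2_{\mathrm{loc}}$. We therefore plan to let $b\uparrow1$ along a sequence on which $(1-b)|z_\varepsilon'(b)|\to0$; such a sequence exists because $z_\varepsilon'\in L^2$ near $1$. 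Alternatively, since $\mathscr W_\varepsilon$ is nondecreasing and $W^{1,1}_{\mathrm{loc}}$, we can use its monotone limit, and only the sign of the boundary contribution matters. At $\delta\downarrow0$ we use $\mathscr W_\varepsilon(\delta)\to0$ from \eqref{eq:Wprime} together with the growth bound on $H_\varepsilon$ near $0$. We also need $\mathscr W_\varepsilon$ to vanish at least like $o(1/H_\varepsilon)$ there, which we expect to get from $\mathscr W_\varepsilon(r)=\int_0^r\phi\eta_\varepsilon$ and Cauchy--Schwarz. Once the boundary terms are handled, the remaining inequalities pass to the limit by monotone convergence, because every integrand involved is nonnegative.
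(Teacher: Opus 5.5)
Your argument is correct and is essentially the paper's proof: the same regularization $z_\varepsilon$, the same test function $gH_\varepsilon^2$ in \eqref{eq:g-divergence-ode}, and the same use of $\mathscr W_\varepsilon'=\phi\eta_\varepsilon$ and $z_\varepsilon\eta_\varepsilon\ge wE_\theta/r^2$. The only difference is the last step, where the paper writes $\mathscr W_\varepsilon(r)=\int_0^r\phi\eta_\varepsilon\dd t$ and uses Tonelli together with the monotonicity of $H_\varepsilon$; this discards exactly the nonnegative term $\int_0^1(g(1)^2-g^2)\mathscr W_\varepsilon^2/(w\phi^2)\dd r$ that you drop, but never produces a second set of boundary terms.

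If you keep your second integration by parts, note that the term $-(g(1)^2-g(b)^2)H_\varepsilon(b)\mathscr W_\varepsilon(b)$ at $b\uparrow1$ has the unfavorable sign, so ``only the sign matters'' is not enough there. It still tends to $0$, because $(g(1)^2-g^2)H_\varepsilon=O(1-r)$ and $\mathscr W_\varepsilon$ is nondecreasing with finite limit $\int_0^1\phi\eta_\varepsilon\dd r$ (recall $\eta_\varepsilon\in L^1_{\mathrm{loc}}(0,\infty)$). The term at $\delta\downarrow0$ enters with a plus sign and can simply be dropped; it even vanishes, since $H_\varepsilon$ is nondecreasing and hence bounded near $0$, so no decay rate for $\mathscr W_\varepsilon$ is needed.
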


\begin{proof}
Define $\Def_{<,\varepsilon}$ by replacing $U$ with
$z_\varepsilon^2=U+\varepsilon^2$ in \eqref{eq:D-interior}.  On every
compact interval $[\delta,b]\subset\subset(0,1)$ the functions $g$ and $\phi$ are
smooth and positive, $z_\varepsilon\in H^1(\delta,b)$,
$wz_\varepsilon'\in W^{1,1}(\delta,b)$, and therefore
$H_\varepsilon=z_\varepsilon/\phi\in H^1(\delta,b)$ with
$H_\varepsilon^2\in W^{1,1}(\delta,b)$.  Also
\[
 \mathscr W_\varepsilon=w\phi^2H_\varepsilon'
 \in W^{1,1}(\delta,b),
 \qquad
 \mathscr W_\varepsilon'=\phi\eta_\varepsilon.
\]
Thus the following integration by parts is the standard one-dimensional
Sobolev integration by parts; equivalently, one may first approximate
$H_\varepsilon$ in $H^1(\delta,b)$ by smooth functions.  Multiplying
\eqref{eq:g-divergence-ode} by $gH_\varepsilon^2$ and integrating over
$[\delta,b]$ gives
\begin{equation}\label{eq:truncated-deficit}
 \Def_{<,\varepsilon;\delta,b}
 =\left[-w\phi^2g'gH_\varepsilon^2\right]_{\delta}^{b}
 +2\int_\delta^bgg'H_\varepsilon\mathscr W_\varepsilon\dd r.
\end{equation}
Here $\Def_{<,\varepsilon;\delta,b}$ denotes the corresponding truncated
integral.  The boundary expression equals $-wgg'z_\varepsilon^2$.  At $0$,
\eqref{eq:g-endpoint-asymptotics} gives $wgg'=O(r^N)$.  The growth estimate
for $z_\varepsilon$ in the proof of \cref{lem:Wronskian} gives
$z_\varepsilon(r)=O(r^{(2-N)/2})$ for $N>2$ and
$z_\varepsilon(r)=O(\sqrt{\abs{\log r}})$ for $N=2$.  Thus
$wgg'z_\varepsilon^2=O(r^2)$ for $N>2$ and
$O(r^2\abs{\log r})$ for $N=2$, so the boundary term tends to zero at
$0$.  Near $1$, the estimate \eqref{eq:zprime-energy}, together with
$z_\varepsilon^2=U+\varepsilon^2$ and
\eqref{eq:polar-energy-integrability}, gives
$z_\varepsilon\in H^1(1/2,1)$.  Hence $z_\varepsilon$ is bounded by the
one-dimensional Sobolev embedding, while $g'(r)=O(1-r)$ by
\eqref{eq:g-endpoint-asymptotics}; the boundary term again tends to zero.
Set
\[
 A(r)=Cg(r)^2-g'(r)^2-(N-1)\frac{g(r)^2}{r^2}.
\]
The coefficient $A$ is bounded on $(0,1)$ by the endpoint asymptotics of
$g$, and $z_\varepsilon^2w=(U+\varepsilon^2)w$ belongs to $L^1(0,1)$.
Thus dominated convergence gives
\[
 \Def_{<,\varepsilon;\delta,b}
 \longrightarrow \Def_{<,\varepsilon}
 \qquad(\delta\downarrow0,\ b\uparrow1).
\]
The boundary term has just been shown to tend to zero.  On the other hand,
all factors in the right-hand integrand of \eqref{eq:truncated-deficit} are
nonnegative by \cref{prop:t-barriers,lem:Wronskian}, so monotone
convergence applies to that side.  Passing to the limit in
\eqref{eq:truncated-deficit} gives
\begin{equation*}
 \Def_{<,\varepsilon}
 =2\int_0^1gg'H_\varepsilon\mathscr W_\varepsilon\dd r.
\end{equation*}
Since
\[
 \mathscr W_\varepsilon(r)
 =\int_0^r\phi(t)\eta_\varepsilon(t)\dd t,
\]
Tonelli's theorem and the monotonicity of $H_\varepsilon$ yield
\begin{align*}
 \Def_{<,\varepsilon}
 &=\int_0^1\phi(t)\eta_\varepsilon(t)
   \left(\int_t^12g(r)g'(r)H_\varepsilon(r)\dd r\right)\dd t
 \notag\\
 &\ge\int_0^1\bigl(g(1)^2-g(t)^2\bigr)
   \phi(t)H_\varepsilon(t)\eta_\varepsilon(t)\dd t
 \notag\\
 &=\int_0^1\bigl(g(1)^2-g(t)^2\bigr)
   z_\varepsilon(t)\eta_\varepsilon(t)\dd t
 \notag\\
 &\ge\int_0^1\frac{g(1)^2-g(t)^2}{t^2}
   E_\theta(t)w(t)\dd t,
\end{align*}
where the last step uses \eqref{eq:eta-positive}.  The coefficient
$Cg^2-g'^2-(N-1)g^2/r^2$ is bounded on $(0,1)$, by the endpoint
asymptotics of $g$.  Since $z_\varepsilon^2=U+\varepsilon^2$ and
$\int_0^1w(r)\dd r<\infty$, we have
$\Def_{<,\varepsilon}\to\Def_<$ as $\varepsilon\downarrow0$, proving
\eqref{eq:weak-radial-remainder}.
\end{proof}

It remains to convert the estimate involving \(E_\theta\) into a matrix
estimate.  For this purpose, when \(U(r)>0\), put
\begin{equation*}
 v_r(\theta)=\frac{u(a+r\theta)}{\sqrt{U(r)}},
 \qquad
 \int_{\Sph^{N-1}}v_r^2\dd\sigma=1,
\end{equation*}
and define
\begin{equation}\label{eq:define_B_r}
 \cB(r)=\int_{\Sph^{N-1}}
 \left(\theta\theta^{\top}-\frac{\Id}{N}\right)
 v_r(\theta)^2\dd\sigma(\theta).
\end{equation}
Set $\cB(r)=0$ if $U(r)=0$.

The next lemma shows that the weighted size
\(U(r)\|\cB(r)\|_{\HS}^2\) is controlled by \(E_\theta(r)\).

\begin{lemma}\label{lem:angular-tensor}
For almost every $r>0$,
\begin{equation}\label{eq:angular-tensor-estimate}
 E_\theta(r)\ge\frac{N^2}{4}
 U(r)\norm{\cB(r)}_{\HS}^2.
\end{equation}
\end{lemma}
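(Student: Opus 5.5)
The plan is to prove \eqref{eq:angular-tensor-estimate} by duality. We test the trace-free matrix \(\cB(r)\) against trace-free symmetric matrices \(S\), and we bound each pairing by an integration by parts on \(\Sph^{N-1}\) that uses the fact that \(\theta^{\top}S\theta\) is a degree-two spherical harmonic.

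\emph{Reductions.} If \(U(r)=0\), then \(\cB(r)=0\) by definition and there is nothing to prove. By the polar slicing already used in \cref{lem:radialized-square}, for almost every \(r>0\) the slice \(f(\theta):=u(a+r\theta)\) lies in \(H^1(\Sph^{N-1})\), with
\[
\int_{\Sph^{N-1}} f^2\dd\sigma=U(r),
\qquad
\int_{\Sph^{N-1}} \abs{\nabla_{\Sph^{N-1}}f}^2\dd\sigma=E_\theta(r).
\]
Fix such an \(r\) with \(U(r)>0\) and \(\cB(r)\ne0\). Then \(f^2\in W^{1,1}(\Sph^{N-1})\) and \(\nabla_{\Sph^{N-1}}(f^2)=2f\nabla_{\Sph^{N-1}}f\).

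\emph{Key identity.} Let \(S\) be a real symmetric matrix with \(\tr S=0\), and set \(p(\theta)=\theta^{\top}S\theta\). Since \(\cB(r)\) is trace-free,
\[
U(r)\ip{S}{\cB(r)}_{\HS}=\int_{\Sph^{N-1}} p\,f^2\dd\sigma .
\]
The function \(p\) is the restriction of the harmonic quadratic \(x^{\top}Sx\), hence \(\Delta_{\Sph^{N-1}}p=-2Np\). Integrating by parts on the closed manifold \(\Sph^{N-1}\), which is legitimate since \(p\) is smooth and \(f^2\in W^{1,1}\), gives
\[
\int p f^2\dd\sigma
=-\frac1{2N}\int (\Delta_{\Sph^{N-1}}p)\, f^2\dd\sigma
=\frac1{N}\int f\,\nabla_{\Sph^{N-1}}p\cdot\nabla_{\Sph^{N-1}}f\dd\sigma .
\]

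\emph{Gradient bound and choice of \(S\).} The tangential gradient is \(\nabla_{\Sph^{N-1}}p=2\bigl(S\theta-(\theta^{\top}S\theta)\theta\bigr)\). It is the projection of \(2S\theta\) onto \(\theta^\perp\), so
\[
\abs{\nabla_{\Sph^{N-1}}p}\le 2\abs{S\theta}\le 2\norm{S}_{\HS}.
\]
By Cauchy--Schwarz,
\[
U(r)\ip{S}{\cB(r)}_{\HS}\le \frac2N\norm{S}_{\HS}\sqrt{U(r)E_\theta(r)} .
\]
Now take \(S=\cB(r)/\norm{\cB(r)}_{\HS}\), which is symmetric and trace-free. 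This gives \(U\norm{\cB}_{\HS}\le \frac2N\sqrt{UE_\theta}\). Squaring and dividing by \(U(r)>0\) yields \eqref{eq:angular-tensor-estimate}.

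\emph{Expected obstacle.} The only delicate point is justifying the spherical integration by parts for almost every slice, that is, checking that the slices of \(u\) and of \(u^2\) inherit \(H^1\) and \(W^{1,1}\) regularity with the correct tangential gradients. This follows from Fubini in polar coordinates applied to a smooth approximating sequence, exactly as in the slicing step of \cref{lem:radialized-square}. The rest is the eigenvalue identity \(\Delta_{\Sph^{N-1}}p=-2Np\) for degree-two harmonics and elementary bounds.
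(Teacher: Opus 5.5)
Your proposal is correct and is essentially the paper's proof: you pair $\cB(r)$ with a trace-free symmetric matrix and integrate by parts on $\Sph^{N-1}$, using that $\theta^{\top}S\theta$ is a degree-two spherical harmonic with eigenvalue $2N$ and tangential gradient bounded by $2\norm{S}_{\HS}$, and then finish with Cauchy--Schwarz. The differences are cosmetic: you use the unnormalized slice with the explicit choice $S=\cB(r)/\norm{\cB(r)}_{\HS}$, where the paper uses the normalized $v_r$ and a supremum over unit $Z$; and the pairing identity $U(r)\ip{S}{\cB(r)}_{\HS}=\int_{\Sph^{N-1}}pf^2\dd\sigma$ holds because $\tr S=0$, not because $\cB(r)$ is trace-free.
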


\begin{proof}
For almost every \(r>0\), the restriction \(\theta\mapsto u(a+r\theta)\) belongs to \(H^1(\Sph^{N-1})\), and \(E_\theta(r)<\infty\). Fix such an $r$. If $U(r)=0$, then the right-hand side of
\eqref{eq:angular-tensor-estimate} is zero and the claim is immediate.
Assume $U(r)>0$, so that $v_r\in H^1(\Sph^{N-1})$ and
$\int_{\Sph^{N-1}}v_r^2\dd\sigma=1$.

Let $Z$ be any trace-free symmetric matrix and set
$Y_Z(\theta)=\theta^{\top}Z\theta$.  Then $Y_Z$ is a degree-two
spherical harmonic,
\[
 -\Delta_{\Sph^{N-1}}Y_Z=2NY_Z,
 \qquad
 \abs{\nabla_{\Sph^{N-1}}Y_Z}\le2\norm{Z}_{\HS}.
\]
Since $Y_Z$ is smooth and $v_r\in H^1$, the following integration by parts
is justified by approximating $v_r$ in $H^1(\Sph^{N-1})$ by smooth
functions:
\begin{align*}
 \ip{\cB(r)}{Z}_{\HS}
 &=\int_{\Sph^{N-1}}Y_Zv_r^2\dd\sigma
  =\frac1{2N}\int_{\Sph^{N-1}}
    \nabla_{\Sph^{N-1}}Y_Z\cdot\nabla_{\Sph^{N-1}}(v_r^2)\dd\sigma \\
 &=\frac1N\int_{\Sph^{N-1}}
    v_r\,\nabla_{\Sph^{N-1}}Y_Z\cdot
    \nabla_{\Sph^{N-1}}v_r\dd\sigma .
\end{align*}
Therefore
\[
 \abs{\ip{\cB(r)}{Z}_{\HS}}
 \le\frac{2}{N}\norm{Z}_{\HS}
 \left(\int_{\Sph^{N-1}}
 \abs{\nabla_{\Sph^{N-1}}v_r}^2\dd\sigma\right)^{1/2}.
\]
Taking the supremum over $\norm{Z}_{\HS}=1$ and using
$E_\theta(r)=U(r)\int_{\Sph^{N-1}}\abs{\nabla_{\Sph^{N-1}}v_r}^2\dd\sigma$
proves \eqref{eq:angular-tensor-estimate}.
\end{proof}

Combining \cref{prop:weak-radial-remainder,lem:angular-tensor}, and writing
\(\dd\rho(r)=w(r)U(r)\,\dd r\), gives
\begin{equation}\label{eq:D-controls-B}
 \Def_<\ge\frac{N^2}{4}
 \int_0^1\frac{g(1)^2-g(r)^2}{r^2}
 \norm{\cB(r)}_{\HS}^2\,\dd\rho(r).
\end{equation}

\section{The matrix trial-space argument}\label{sec:matrix}

We now prove \cref{thm:main}.  Assume the normalization
\eqref{eq:normalization}. Since a domain means a connected open set, the first
Dirichlet eigenvalue is simple.

Let \(u\in H_0^1(\Omega)\) be the nonnegative \emph{normalized first eigenfunction},
so that, weakly in \(\Omega\),
\begin{equation}\label{eq:u-normalized}
 -\Delta u=\alpha^2u,
 \qquad
 \int_\Omega u^2\dd x=1.
\end{equation}
No boundary regularity of $\Omega$ will be used below.
Extend $u$ by zero to $\R^N$.  We verify explicitly that this zero extension
satisfies \eqref{eq:u2-distribution-general}.  For every
$\zeta\in C_c^\infty(\R^N)$, the function $u\zeta$ belongs to
$H_0^1(\Omega)$, and testing \eqref{eq:u-normalized} with $u\zeta$ gives
\[
 \int_\Omega \zeta\abs{\nabla u}^2\dd x
 +\int_\Omega u\nabla u\cdot\nabla\zeta\dd x
 =\alpha^2\int_\Omega u^2\zeta\dd x.
\]
With the zero extension understood, $\nabla(u^2)=2u\nabla u$ in
$L^1(\R^N)$.  Hence
\[
 \langle \Delta(u^2),\zeta\rangle
 =-\int_{\R^N}\nabla(u^2)\cdot\nabla\zeta\dd x
 =2\int_{\R^N}\zeta\abs{\nabla u}^2\dd x
  -2\alpha^2\int_{\R^N}u^2\zeta\dd x.
\]
Therefore
\begin{equation*}
 \Delta(u^2)=2\abs{\nabla u}^2-2\alpha^2u^2
 \quad\text{in }\mathcal D'(\R^N).
\end{equation*}
Thus all results of \cref{sec:radial} apply.

\subsection{The Weinberger center}\label{subsec:weinberger-center}
We next choose the center \(a\) so that the components of the vector-valued
trial function defined below have zero \(u^2\)-mean. In this subsection, \(g\) denotes the constant extension
of the quotient \(v/\phi\):
\[
 g(r)=\frac{v(r)}{\phi(r)}\quad 0<r<1,
 \qquad
 g(r)=g(1)\quad r\ge1.
\]
Since \(v\) and \(\phi\) have simple zeros at \(r=1\), the quotient has a
finite positive limit \(g(1)>0\).  Moreover, by
\eqref{eq:g-endpoint-asymptotics}, \(g'(r)\to0\) as \(r\uparrow1\).  Hence
the constant extension of \(g\) to \([1,\infty)\) is Lipschitz.

The choice of \(a\) is the weighted center construction used in Weinberger's
method; see \cite{Weinberger1956,Laugesen2021}.  For completeness we give the
short existence proof.  Let
\[
 \Gamma(r)=\int_0^r g(s)\,\dd s .
\]
Since \(g\) is bounded, \(\Gamma\) is globally Lipschitz.  Since
\(g(r)=g(1)>0\) for \(r\ge1\), \(\Gamma\) grows linearly at infinity.  Define
\[
 \mathcal F(a)=\int_\Omega\Gamma(\abs{x-a})u(x)^2\,\dd x.
\]
Then \(\mathcal F\) is continuous, in fact Lipschitz.  It is also coercive:
if \(R_\Omega=\sup_{x\in\Omega}\abs{x}<\infty\) and
\(\abs{a}\ge R_\Omega+1\), then \(\abs{x-a}\ge1\) for \(x\in\Omega\), and
\[
 \mathcal F(a)
 \ge \Gamma(1)+g(1)(\abs{a}-R_\Omega-1).
\]
Here we used \(\int_\Omega u^2\,\dd x=1\).  Hence
\(\mathcal F(a)\to+\infty\) as \(\abs a\to\infty\), and \(\mathcal F\) has a
minimizer \(a\).

The map \(y\mapsto\Gamma(\abs y)\) is \(C^1\) at \(y=0\), with gradient zero,
because \(g(r)=O(r)\) as \(r\downarrow0\).  For \(x\ne a\),
\[
 \nabla_a\Gamma(\abs{x-a})
 =
 -g(r)\theta,
 \qquad
 r=\abs{x-a},
 \quad
 \theta=\frac{x-a}{\abs{x-a}},
\]
and the value at \(x=a\) is interpreted as zero.  Since
\(\abs{g(r)\theta}\le g(1)\), dominated convergence justifies differentiation
under the integral sign.  The first-order condition at the minimizer gives
\begin{equation}\label{eq:center-condition}
 \int_\Omega g(r)\theta\,u^2\,\dd x=0,
 \qquad
 r=\abs{x-a},
 \quad
 \theta=\frac{x-a}{\abs{x-a}},
\end{equation}
again with the integrand interpreted as zero at \(x=a\).

Define the vector-valued trial map
\begin{equation}\label{eq:F-vector}
 F(x)=g(r)\theta\in\R^N\quad (x\ne a),
 \qquad F(a)=0.
\end{equation}
This pointwise definition at \(a\) is consistent with continuity, because
\(g(r)=O(r)\) as \(r\downarrow0\).  The map \(F\) is globally Lipschitz.
Indeed, for \(0<r<1\),
\[
 DF=g'(r)P+\frac{g(r)}{r}(\Id-P),
 \qquad P=\theta\theta^{\top}.
\]
The endpoint asymptotics of \(g\) give boundedness of \(g'\) and \(g/r\)
near \(0\).  At \(r=1\), the constant extension has radial derivative \(0\),
matching \(g'(1)=0\), while the angular coefficient \(g(r)/r\) remains
bounded.  For \(r>1\),
\[
 DF=\frac{g(1)}{r}(\Id-P),
\]
which is bounded as well.  Thus \(F\) is continuous and has uniformly bounded
a.e.\ derivative, and hence is globally Lipschitz.  Its components have zero
\(u^2\)-mean by \eqref{eq:center-condition}.

\subsection{Trace Rayleigh--Ritz reduction}\label{subsec:trace-ritz-reduction}

We use the following matrix form of the Rayleigh--Ritz principle.  It converts
an \(N\)-dimensional family of functions, orthogonal to the first eigenfunction,
into a lower bound for the sum of the reciprocal spectral gaps.  This is the
trace form of Hersch's variational principle; see
\cite{Hersch1961,HileXu1993}. For a recent use of the same matrix viewpoint in the Neumann
Ashbaugh--Benguria conjecture, see \cite{HeLiTang2026}.

\begin{lemma}\label{lem:trace-Ritz}
Let $u$ be a normalized simple first Dirichlet eigenfunction and let
$f_1,\dots,f_N\in W^{1,\infty}(\Omega)$ satisfy
$\int_\Omega f_i u^2=0$.  Define
\begin{equation*}
 M_{ij}=\int_\Omega f_if_ju^2\dd x,
 \qquad
 K_{ij}=\int_\Omega\nabla f_i\cdot\nabla f_j\,u^2\dd x.
\end{equation*}
If both $M$ and $K$ are positive definite, then
\begin{equation*}
 \sum_{i=1}^{N}\frac{1}{\lambda_{i+1}-\lambda_1}
 \ge\tr(K^{-1}M).
\end{equation*}
\end{lemma}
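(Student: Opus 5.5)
The plan is to build trial functions $\varphi=u\,(c\cdot f)$ and reduce the claim to a finite-dimensional eigenvalue inequality.

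Write $f=(f_1,\dots,f_N)^{\top}$. For any vector $c\in\R^N$, set $\varphi_c=u\,(c\cdot f)$. Since $f_i\in W^{1,\infty}$ and $u\in H_0^1(\Omega)$, we have $\varphi_c\in H_0^1(\Omega)$. Moreover $\int\varphi_c u=\int (c\cdot f)u^2=0$, so $\varphi_c\perp u$.

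\textbf{Step 1 (Rayleigh quotient identity).} Expanding $|\nabla(uh)|^2$ and testing the weak eigen-equation against $uh^2$ gives
\[
\int|\nabla(uh)|^2=\lambda_1\int u^2h^2+\int u^2|\nabla h|^2 .
\]
For $h=c\cdot f$ this reads
\[
\int|\nabla\varphi_c|^2-\lambda_1\int\varphi_c^2=c^{\top}Kc,
\qquad
\int\varphi_c^2=c^{\top}Mc .
\]
This is the usual Payne--P\'olya--Weinberger/Hile--Protter identity and needs only $u\in H_0^1$ and $h$ Lipschitz.

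\textbf{Step 2 (Rayleigh--Ritz on the shifted operator).} Let $\Lambda=-\Delta-\lambda_1$ act on $u^{\perp}$. Its eigenvalues there are $\mu_i=\lambda_{i+1}-\lambda_1>0$, which are positive because $\lambda_1$ is simple. The subspace $V=\{\varphi_c\}$ has dimension $N$, since $M$ is positive definite. On $V$, the quadratic form of $\Lambda$ is represented by $K$ and the $L^2$ Gram matrix by $M$. The generalized eigenvalues $\nu_1\le\dots\le\nu_N$ of the pencil $(K,M)$, that is the eigenvalues of $M^{-1}K$, are the Ritz values of $\Lambda$ on $V$. By the Poincar\'e/min--max interlacing principle, $\nu_i\ge\mu_i$ for each $i$. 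Hence
\[
\sum_i\frac{1}{\mu_i}\ge\sum_i\frac1{\nu_i}=\tr\bigl((M^{-1}K)^{-1}\bigr)=\tr(K^{-1}M).
\]
Positive definiteness of $K$ guarantees that every $\nu_i>0$, so the reciprocals make sense.

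\textbf{Step 3 (checking the interlacing).} For the $i$th Ritz value, take $V_i\subset V$ spanned by the first $i$ generalized eigenvectors. Then $\max_{V_i}\langle\Lambda\varphi,\varphi\rangle/\|\varphi\|^2=\nu_i$. The min--max characterization of $\mu_i$ over $i$-dimensional subspaces of $u^{\perp}\cap H_0^1$ then gives $\mu_i\le\nu_i$.

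The only point needing care, and the closest thing to an obstacle, is Step 1 without boundary regularity. I would justify it by testing the weak equation with $uh^2\in H_0^1(\Omega)$, which is legitimate because $h^2$ is Lipschitz and bounded on the bounded set $\Omega$. The cross terms then cancel exactly: $\int\nabla u\cdot\nabla(uh^2)=\int h^2|\nabla u|^2+2\int uh\nabla u\cdot\nabla h$, which matches the expansion of $|\nabla(uh)|^2$ minus $u^2|\nabla h|^2$.
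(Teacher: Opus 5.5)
Your proposal is correct and takes the same route as the paper. Both arguments use the trial functions $u\,(c\cdot f)$, which are orthogonal to $u$. Both use the identity $\int|\nabla(uh)|^2-\lambda_1\int u^2h^2=\int u^2|\nabla h|^2$, obtained by testing the weak equation with $uh^2\in H_0^1(\Omega)$. Both then compare, via min--max, the Ritz values on $V\subset H_0^1(\Omega)\cap u^\perp$ with $\lambda_{k+1}-\lambda_1$ and sum the reciprocals. The only cosmetic difference is that you read off your Ritz values $\nu_k$ as eigenvalues of $M^{-1}K$, whereas the paper passes through the symmetric matrix $K^{-1/2}MK^{-1/2}$; both give $\tr(K^{-1}M)$.
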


\begin{proof}
Set $\psi_i=uf_i$ for $i=1,\ldots,N$, and let $V=\operatorname{span}\{\psi_1,\ldots,\psi_N\}$. Since multiplication by a \(W^{1,\infty}\) function preserves
\(H_0^1(\Omega)\), we have \(\psi_i\in H_0^1(\Omega)\).  Moreover,
\[
 \int_\Omega \psi_i u\,\dd x
 =
 \int_\Omega f_i u^2\,\dd x
 =0,
\]
and hence \(V\subset u^\perp\), where the orthogonality is in
\(L^2(\Omega)\).  Since \(M\) is positive definite, the functions
\(\psi_1,\ldots,\psi_N\) are linearly independent in \(L^2(\Omega)\), so
\(\dim V=N\).

For \(v\in H_0^1(\Omega)\), put
\[
 \mathcal Q(v)
 =
 \int_\Omega\bigl(|\nabla v|^2-\lambda_1 v^2\bigr)\dd x.
\]
Since \(\lambda_1\) is simple, the first eigenspace is
\(\operatorname{span}\{u\}\).  Hence the shifted Dirichlet form $\mathcal Q(v)$ restricted to \(H_0^1(\Omega)\cap u^\perp\) has eigenvalues
\[
 \lambda_2-\lambda_1,\lambda_3-\lambda_1,\ldots,
\]
counted with multiplicity.  Therefore the min--max principle gives, for
\(k=1,\ldots,N\),
\[
 \lambda_{k+1}-\lambda_1
 =
 \inf_{\substack{L\subset H_0^1(\Omega)\cap u^\perp\\ \dim L=k}}
 \ \sup_{0\ne v\in L}
 \frac{\mathcal Q(v)}{\int_\Omega v^2\dd x}.
\]
Now define the corresponding restricted min--max quantities on \(V\) by
\[
 \eta_k
 =
 \inf_{\substack{L\subset V\\ \dim L=k}}
 \ \sup_{0\ne v\in L}
 \frac{\mathcal Q(v)}{\int_\Omega v^2\dd x},
 \qquad k=1,\ldots,N.
\]
Because \(V\subset u^\perp\), the infimum defining
\(\lambda_{k+1}-\lambda_1\) is taken over a larger class of subspaces than
the infimum defining \(\eta_k\).  Therefore
\[
 \lambda_{k+1}-\lambda_1\le \eta_k,
 \qquad k=1,\ldots,N.
\]
Since \(K\) is positive definite, each \(\eta_k\) is positive.  Hence
\[
 \sum_{k=1}^{N}
 \frac{1}{\lambda_{k+1}-\lambda_1}
 \ge
 \sum_{k=1}^{N}\frac{1}{\eta_k}.
\]

It remains to compute the last sum in terms of \(M\) and \(K\). For
\(v\in V\), write
\[
 v=\sum_{i=1}^{N}a_i\psi_i=uf,
 \qquad
 f=\sum_{i=1}^{N}a_i f_i,
\]
where \(a=(a_1,\ldots,a_N)^\top\in\mathbb R^N\).  Then
\[
 \int_\Omega v^2\dd x=a^\top M a.
\]

We next compute the shifted energy. Testing the first-eigenfunction equation \(-\Delta u=\lambda_1u\) with \(uf^2\) gives
\[
 \int_\Omega
 \left(f^2|\nabla u|^2+2uf\nabla u\cdot\nabla f\right)\dd x
 =
 \lambda_1\int_\Omega u^2f^2\dd x.
\]
Using $|\nabla(uf)|^2 = f^2|\nabla u|^2+2uf\nabla u\cdot\nabla f+u^2|\nabla f|^2$, we obtain
\[
 \mathcal Q(uf)
 =
 \int_\Omega\left(|\nabla(uf)|^2-\lambda_1u^2f^2\right)\dd x
 =
 \int_\Omega u^2|\nabla f|^2\dd x.
\]
Consequently, for \(f=\sum_{i=1}^N a_i f_i\) we have
\[
 \mathcal Q(v)=\mathcal Q(uf)
 =
 \sum_{i,j=1}^N a_i a_j
 \int_\Omega u^2\nabla f_i\cdot\nabla f_j\,\dd x
 =
 a^\top K a.
\]
Thus, on \(V\), the shifted Rayleigh quotient is
\[
 \frac{\mathcal Q(v)}{\int_\Omega v^2\dd x}
 =
 \frac{a^\top K a}{a^\top M a}.
\]

Let $A=K^{-1/2}MK^{-1/2}$. This is a symmetric positive definite matrix. If \(b=K^{1/2}a\), then
\[
 \frac{a^\top K a}{a^\top M a}
 =
 \frac{|b|^2}{b^\top A b}.
\]
Let \(\sigma_1\ge\cdots\ge\sigma_N>0\) be the eigenvalues of \(A\).  By the
finite-dimensional Courant--Fischer principle applied to \(A\),
\[
 \frac1{\eta_k}=\sigma_k,\qquad k=1,\ldots,N.
\]
Consequently,
\[
 \sum_{k=1}^{N}\frac1{\eta_k}
 =
 \sum_{k=1}^{N}\sigma_k
 =
 \tr(A)
 =
 \tr(K^{-1/2}MK^{-1/2})
 =
 \tr(K^{-1}M),
\]
where the last equality follows from cyclicity of the trace.  Combining this
with the previous inequality proves
\[
 \sum_{k=1}^{N}
 \frac{1}{\lambda_{k+1}-\lambda_1}
 \ge
 \tr(K^{-1}M).
\qedhere\]
\end{proof}

Apply \cref{lem:trace-Ritz} with \(f_i=F_i\) for \(i=1,\ldots,N\), where $F$ is defined in \eqref{eq:F-vector}. Thus
\[
 M=\int_\Omega FF^{\top}u^2\,\dd x,
 \qquad
 K=\int_\Omega (DF)(DF)^{\top}u^2\,\dd x,
\]
that is,
\[
 M_{ij}=\int_\Omega F_iF_j u^2\,\dd x,
 \qquad
 K_{ij}=\int_\Omega \nabla F_i\cdot\nabla F_j\,u^2\,\dd x.
\]
Here \(DF\) denotes the a.e. Jacobian matrix of the Lipschitz map \(F\),
with entries \((DF)_{ij}=\partial_jF_i\).  We first verify that \(M\) and
\(K\) are positive definite.

For $\xi\ne0$,
\[
 \xi^{\top}M\xi
 =\int_\Omega g(r)^2(\xi\cdot\theta)^2u^2\dd x
 =\int_\Omega\left(\frac{g(r)}{r}\right)^2
   \bigl(\xi\cdot(x-a)\bigr)^2u^2\dd x.
\]
Since $g(r)/r>0$ for $r>0$,
equality $\xi^{\top}M\xi=0$ would imply that $u=0$ a.e. outside the
affine hyperplane $a+\xi^\perp$.  This hyperplane has zero
$N$-dimensional Lebesgue measure, and $u^2\dd x$ is absolutely continuous
with respect to Lebesgue measure.  Hence $u=0$ a.e. in $\Omega$, contradicting
$\int_\Omega u^2\dd x=1$.  Therefore $M$ is positive definite.

By the variational characterization of \(\lambda_2\), and since
\(\lambda_1\) is simple, the shifted Dirichlet form
\[
 \mathcal Q(v)=\int_\Omega\bigl(\abs{\nabla v}^2-\lambda_1v^2\bigr)\,\dd x
\]
satisfies
\[
 \mathcal Q(v)\ge(\lambda_2-\lambda_1)\int_\Omega v^2\,\dd x
 \qquad
 \text{for every }v\in H_0^1(\Omega)\cap u^\perp .
\]
We now prove that \(K\) is positive definite.  Suppose that
\(\xi^{\top}K\xi=0\), and set $v=u\,\xi\cdot F$. Since \(F\in W^{1,\infty}(\Omega;\mathbb R^N)\), we have
\(v\in H_0^1(\Omega)\).  Moreover, by \eqref{eq:center-condition},
\[
 \int_\Omega vu\,\dd x
 =
 \int_\Omega (\xi\cdot F)u^2\,\dd x
 =
 \xi\cdot\int_\Omega F u^2\,\dd x
 =0.
\]
Thus \(v\in u^\perp\).  Applying the identity
\[
 \mathcal Q(uf)=\int_\Omega u^2\abs{\nabla f}^2\,\dd x
\]
with \(f=\xi\cdot F\), we obtain
\[
 \mathcal Q(v)
 =
 \int_\Omega u^2\abs{\nabla(\xi\cdot F)}^2\,\dd x
 =
 \xi^{\top}K\xi
 =0.
\]
The spectral-gap estimate above therefore gives \(v=0\).  Hence
\[
 \xi^{\top}M\xi
 =
 \int_\Omega (\xi\cdot F)^2u^2\,\dd x
 =
 \int_\Omega v^2\,\dd x
 =0.
\]
Since \(M\) is positive definite, this forces \(\xi=0\).  Thus \(K\) is
positive definite as well.  Consequently,
\begin{equation}\label{eq:Ritz-main}
 \sum_{i=1}^{N}\frac{1}{\lambda_{i+1}-\lambda_1}
 \ge\tr(K^{-1}M).
\end{equation}

Writing $P=\theta\theta^{\top}$, direct differentiation of
\eqref{eq:F-vector} gives
\begin{align*}
 M&=\int_\Omega m(r)P\,u^2\dd x,
 \\
 K&=\int_\Omega
 \left(g'(r)^2P+\frac{g(r)^2}{r^2}(\Id-P)\right)u^2\dd x.
\end{align*}
Here $m$ and $d$ are the functions defined in \eqref{eq:m-d-definitions};
for $r\ge1$, $g'=0$, $m=g(1)^2$, and $d(r)=g(1)^2/r^2$.  Set
\begin{equation*}
 S=\int_\Omega d(r)P\,u^2\dd x.
\end{equation*}
Then
\begin{equation*}
 K=b\Id-S,
 \qquad
 b=\int_\Omega\frac{g(r)^2}{r^2}u^2\dd x.
\end{equation*}

Let
\begin{equation}\label{eq:def:G_M0_S0}
 G=\tr M=\int_\Omega m(r)u^2\dd x,
 \qquad
 M_0=M-\frac{G}{N}\Id,
 \qquad
 S_0=S-\frac{\tr S}{N}\Id.
\end{equation}
Since $\tr K=Nb-\tr S$, we have
\begin{equation*}
 K=\frac{\tr K}{N}\Id-S_0.
\end{equation*}
Define the trace-free mismatch matrix
\begin{equation*}
 \cR=S_0-M_0
 =\int_\Omega(d(r)-m(r))
 \left(P-\frac{\Id}{N}\right)u^2\dd x.
\end{equation*}
This matrix measures the mismatch between the trace-free parts of \(S\) and \(M\). Then
\begin{align}
 \tr(KM)
 &=\frac{G}{N}\tr K-\ip{S_0}{M_0}_{\HS},
 \label{eq:trKM-1}\\
 -\ip{S_0}{M_0}_{\HS}
 &=-\norm{M_0+\tfrac12\cR}_{\HS}^2
   +\frac14\norm{\cR}_{\HS}^2
 \le\frac14\norm{\cR}_{\HS}^2.
 \label{eq:square-completion}
\end{align}

Set
\begin{equation}\label{eq:def:DefCG-trK}
\Def=CG-\tr K.
\end{equation}
This scalar deficit will be split into its interior and exterior parts below.
The central estimate is
\begin{equation}\label{eq:mismatch-target}
 \norm{\cR}_{\HS}^2\le\frac{4G\Def}{N}.
\end{equation}
We prove it next.

\subsection{Interior mismatch}\label{subsec:interior-mismatch}

Use the polar notation of \cref{sec:radial} around the Weinberger center
\(a\), and set
\[
 \dd\rho(r)=r^{N-1}U(r)\dd r.
\]
By the definition of \(\cB(r)\) in \eqref{eq:define_B_r}, the inner spherical integral in the formula
for \(\cR\) is \(U(r)\cB(r)\). Hence
\begin{equation}\label{eq:def:cR}
 \cR=\int_0^\infty(d(r)-m(r))\cB(r)\,\dd\rho(r).
\end{equation}

Taking traces in the formula for \(K\) and using
\(\tr P=1\), \(\tr(\Id-P)=N-1\), we have
\[
 \tr K
 =
 \int_\Omega
 \left(g'(r)^2+(N-1)\frac{g(r)^2}{r^2}\right)u^2\,\dd x.
\]
Since \(G=\tr M=\int_\Omega m(r)u^2\,\dd x\), it follows that
\[
 \Def=CG-\tr K
 =
 \int_\Omega
 \left(Cm(r)-g'(r)^2-(N-1)\frac{g(r)^2}{r^2}\right)u^2\,\dd x.
\]
Equivalently, in the polar notation
\(\dd\rho(r)=r^{N-1}U(r)\,\dd r\),
\[
 \Def
 =
 \int_0^\infty
 \left(Cm-g'^2-(N-1)\frac{g^2}{r^2}\right)\dd\rho.
\]

We split the relevant quantities in \eqref{eq:def:G_M0_S0}, \eqref{eq:def:DefCG-trK} and \eqref{eq:def:cR} at \(r=1\):
\[
 \cR=\cR_{<}+\cR_{>},
 \qquad
 G=G_{<}+G_{>},
 \qquad
 \Def=\Def_{<}+\Def_{>},
\]
where
\[
 \cR_{<}=\int_0^1(d-m)\cB\,\dd\rho,
 \qquad
 \cR_{>}=\int_1^\infty(d-m)\cB\,\dd\rho,
\]
and
\begin{align*}
 G_{<}&=\int_0^1m\,\dd\rho,
 &
 \Def_{<}&=\int_0^1
 \left(Cm-g'^2-(N-1)\frac{g^2}{r^2}\right)\dd\rho,\\
 G_{>}&=\int_1^\infty m\,\dd\rho,
 &
 \Def_{>}&=\int_1^\infty
 \left(Cm-g'^2-(N-1)\frac{g^2}{r^2}\right)\dd\rho.
\end{align*}
By \cref{prop:weak-radial-remainder}, \(\Def_{<}\ge0\). Since \(g\) is
strictly increasing on \((0,1)\), one has \(0<m(r)<g(1)^2\) for
\(0<r<1\). Therefore the following weighted Cauchy--Schwarz estimate is
legitimate:
\begin{align}
 \norm{\cR_{<}}_{\HS}^2
 &\le
 \left[\int_0^1\frac{r^2(d-m)^2}{g(1)^2-m}\,\dd\rho\right]
 \left[\int_0^1\frac{g(1)^2-m}{r^2}
       \norm{\cB}_{\HS}^2\,\dd\rho\right]
 \notag\\
 &\le
 \left[N\int_0^1m\,\dd\rho\right]
 \left[\frac{4}{N^2}\Def_{<}\right]
 =\frac{4}{N}G_{<}\Def_{<}.
 \label{eq:interior-mismatch}
\end{align}
Here the first factor is bounded by \cref{prop:bessel-mismatch}, while the
second factor is bounded by \eqref{eq:D-controls-B}.

\subsection{Exterior mismatch}\label{subsec:exterior-mismatch}

For \(r\ge1\), the constant extension gives \(g(r)=g(1)\) and
\(g'(r)=0\). Therefore
\[
 d(r)-m(r)=g(1)^2\left(\frac{1}{r^2}-1\right),
\]
and
\[
 \Def_{>}=g(1)^2\int_1^\infty
 \left(C-\frac{N-1}{r^2}\right)\dd\rho(r)\ge0
\]
by \cref{lem:C-lower-bound}.

For \(U(r)>0\), set
\[
\Pi(r)=\int_{\Sph^{N-1}}\theta\theta^{\top}v_r^2\,\dd\sigma,
\]
and, for \(U(r)=0\), set \(\Pi(r)=\Id/N\). Then \(\Pi(r)\) is positive
semidefinite with trace one, and \(\cB(r)=\Pi(r)-\Id/N\) in both cases.
Hence
\begin{equation}\label{eq:B-universal-bound}
 \norm{\cB(r)}_{\HS}^2
 =\tr(\Pi(r)^2)-\frac1N\le\frac{N-1}{N}.
\end{equation}

For \(\tau\in[0,1]\), \cref{lem:C-lower-bound} implies
\begin{equation}\label{eq:outside-scalar-ratio}
 \frac{(1-\tau)^2}{C-(N-1)\tau}\le\frac1C.
\end{equation}
Using the Hilbert--Schmidt Cauchy--Schwarz inequality, followed by
\eqref{eq:B-universal-bound} and \eqref{eq:outside-scalar-ratio} with
\(\tau=r^{-2}\), we obtain
\begin{align}
 \norm{\cR_{>}}_{\HS}^2
 &\le G_{>}\int_1^\infty
 g(1)^2\left(1-\frac1{r^2}\right)^2
 \norm{\cB(r)}_{\HS}^2\,\dd\rho(r)
 \notag\\
 &\le \frac{N-1}{NC}G_{>}\Def_{>}
 \le \frac4N G_{>}\Def_{>}.
 \label{eq:exterior-mismatch}
\end{align}
In the last step we used the weaker bound \((N-1)/(NC)<1/N< 4/N\), which follows from \cref{lem:C-lower-bound}.

Combining \eqref{eq:interior-mismatch} and \eqref{eq:exterior-mismatch},
we get
\begin{align*}
 \norm{\cR}_{\HS}
 &\le\frac{2}{\sqrt N}
 \left(\sqrt{G_{<}\Def_{<}}+\sqrt{G_{>}\Def_{>}}\right)\\
 &\le\frac{2}{\sqrt N}
 \sqrt{(G_{<}+G_{>})(\Def_{<}+\Def_{>})}
 =\frac{2}{\sqrt N}\sqrt{G\Def}.
\end{align*}
Squaring this inequality proves \eqref{eq:mismatch-target}.

\subsection{Completion of the proof}\label{subsec:completion}

Combining \eqref{eq:trKM-1}, \eqref{eq:square-completion}, and
\eqref{eq:mismatch-target}, and using \(\Def=CG-\tr K\), we obtain
\begin{equation}\label{eq:trKM-final}
 \tr(KM)
 \le\frac{G}{N}\tr K+\frac{G\Def}{N}
=\frac{G}{N}(CG-\Def)+\frac{G\Def}{N}
 =\frac{CG^2}{N}.
\end{equation}

For positive definite \(K\) and \(M\), the Hilbert--Schmidt
Cauchy--Schwarz inequality gives
\begin{equation}\label{eq:matrix-CS}
 \tr(K^{-1}M)\tr(KM)\ge(\tr M)^2=G^2.
\end{equation}
Since \(\tr(KM)>0\), \eqref{eq:trKM-final} and \eqref{eq:matrix-CS}
imply
\[
 \tr(K^{-1}M)
 \ge
 \frac{G^2}{\tr(KM)}
 \ge
 \frac{G^2}{CG^2/N}
 =
 \frac{N}{C}.
\]
Together with \eqref{eq:Ritz-main}, this proves
\eqref{eq:normalized-goal}.

It remains to undo the normalization. Let \(\Omega_0\) be an arbitrary
bounded domain, and set
\[
 s=\frac{\sqrt{\lambda_1(\Omega_0)}}{\alpha},
 \qquad
 \widetilde\Omega=s\Omega_0.
\]
Then the scaling law for Dirichlet eigenvalues gives
\[
 \lambda_k(\widetilde\Omega)=s^{-2}\lambda_k(\Omega_0)
 =
 \frac{\alpha^2}{\lambda_1(\Omega_0)}\lambda_k(\Omega_0),
 \qquad
 \lambda_1(\widetilde\Omega)=\alpha^2.
\]
Applying \eqref{eq:normalized-goal} to \(\widetilde\Omega\) gives
\[
 \frac{\lambda_1(\Omega_0)}{\alpha^2}
 \sum_{i=1}^N
 \frac{1}{\lambda_{i+1}(\Omega_0)-\lambda_1(\Omega_0)}
 \ge
 \frac{N}{\beta^2-\alpha^2}.
\]
Equivalently,
\[
 \sum_{i=1}^N
 \frac{\lambda_1(\Omega_0)}
 {\lambda_{i+1}(\Omega_0)-\lambda_1(\Omega_0)}
 \ge
 \frac{N\alpha^2}{\beta^2-\alpha^2}
 =
 \frac{N}{\beta^2/\alpha^2-1}.
\]
Since \(\alpha=j_{\nu,1}\) and \(\beta=j_{\nu+1,1}\), this is
\eqref{eq:main}.

It remains to discuss equality.  First suppose that
$\SobCap(\Omega\triangle B_R(a))=0$ for some ball $B_R(a)$.  By the
capacitary characterization of Dirichlet spaces recalled in \cref{subsec:main-results},
\[
 H_0^1(\Omega)=H_0^1(B_R(a)).
\]
Thus the two Dirichlet forms have the same form domain and the same energy
integral, and hence the associated Dirichlet spectra coincide; see
\cite[Ch.~4]{EvansGariepy2015} and
\cite[Ch.~2]{BucurButtazzo2005}.  Separation of variables gives the ball levels
$j_{\nu+\ell,k}^2/R^2$.  The standard zero ordering
$j_{\nu+1,1}<j_{\nu,2}$, together with the monotonicity of
$\mu\mapsto j_{\mu,1}$, gives
$j_{\nu+1,1}<j_{\nu+\ell,1}$ for every $\ell\ge2$.  Hence the first level
after $\lambda_1$ is precisely the degree-one level, with multiplicity $N$:
\[
 \lambda_1=\frac{\alpha^2}{R^2},
 \qquad
 \lambda_2=\cdots=\lambda_{N+1}=\frac{\beta^2}{R^2}.
\]
Thus equality holds.

Conversely, assume that equality holds in \eqref{eq:main}. Since both sides
of \eqref{eq:main} are invariant under dilations, we may scale \(\Omega\) so
that $\lambda_1(\Omega)=\alpha^2$. In the notation of the proof above,
\[
 \frac{N}{C}
 =
 \sum_{i=1}^{N}\frac{1}{\lambda_{i+1}-\lambda_1}
 \ge \tr(K^{-1}M)
 \ge \frac{G^2}{\tr(KM)}
 \ge \frac{N}{C}.
\]
Hence equality holds at every step.  Equality in the Hilbert--Schmidt
Cauchy--Schwarz inequality \eqref{eq:matrix-CS}, applied to
\(K^{-1/2}M^{1/2}\) and \(K^{1/2}M^{1/2}\), gives
\[
 K^{-1/2}M^{1/2}=cK^{1/2}M^{1/2}
\]
for some \(c>0\).  Indeed, the Hilbert--Schmidt inner product of these two matrices is \(\tr M=G>0\), so the proportionality constant is positive. Since \(M\) is positive definite, we may multiply on the right by \(M^{-1/2}\) and obtain $K^{-1/2}=cK^{1/2}$. Multiplying on the left by \(K^{1/2}\) gives \(\Id=cK\).  Hence \(K\) is a
scalar matrix.

Since equality also holds in \eqref{eq:trKM-final}, we have $\tr(KM)=CG^2/N$. Writing \(K=k\Id\) and using \(\tr M=G\), we get $kG=CG^2/N$. Thus $K=(CG/N)\Id$ and therefore
\[
 \Def=CG-\tr K=0.
\]
By \cref{prop:weak-radial-remainder} and the positivity of the exterior integrand established in \cref{subsec:exterior-mismatch}, both \(\Def_{<}\) and \(\Def_{>}\) are
nonnegative.  Since \(\Def=\Def_{<}+\Def_{>}=0\), we have
\[
 \Def_{<}=\Def_{>}=0.
\]
In particular,
\[
 0=\Def_{>}
 =g(1)^2\int_1^\infty
 \left(C-\frac{N-1}{r^2}\right)\dd\rho(r).
\]
Since
\[
 C-\frac{N-1}{r^2}\ge C-(N-1)>0
 \qquad (r\ge1)
\]
by \cref{lem:C-lower-bound}, the last identity implies 
\[
 0=\rho((1,\infty))
 =
 \int_1^\infty r^{N-1}U(r)\,\dd r
 =
 \int_{\R^N\setminus B_1(a)}u^2\,\dd x.
\]
Thus the zero extension of \(u\) vanishes a.e. outside the unit ball
\[
 B:=B_1(a).
\]

Since \(B\) is smooth, the zero-extension characterization of \(H_0^1(B)\)
applies; see \cite[Proposition~9.18]{Brezis2011}.  Since \(u\in H^1(\R^N)\)
vanishes a.e. on \(\R^N\setminus B\), it follows that \(u|_B\in H_0^1(B)\). Since \(u\) is a first eigenfunction with eigenvalue \(\alpha^2\), testing the
weak equation with \(u\) gives
\[
 \frac{\int_B|\nabla u|^2\dd x}{\int_Bu^2\dd x}
 =
 \frac{\int_{\R^N}|\nabla u|^2\dd x}{\int_{\R^N}u^2\dd x}
 =
 \alpha^2
 =
 \lambda_1(B).
\]
Thus \(u|_B\) attains the Rayleigh minimum for the unit ball, and hence is a
first Dirichlet eigenfunction of \(B\).  Therefore, for some constant
\(c_0>0\),
\[
 u(x)=c_0\phi(|x-a|)\quad\text{in }B,
 \qquad
 u=0\quad\text{a.e. in }\R^N\setminus B,
\]
where we recall that \(\phi(r)=r^{-\nu}J_\nu(\alpha r)\) is the positive radial first
eigenfunction of the unit ball.

Let \(\tilde u\) be the quasi-continuous representative of \(u\).  Since \(u\in H_0^1(\Omega)\), the capacitary characterization of \(H_0^1(\Omega)\) in \eqref{eq:cap-def-of-h01} gives
\[
 \tilde u=0
 \quad\text{quasi-everywhere on }\R^N\setminus\Omega.
\]
On the other hand, \(u\) agrees a.e. in \(B\) with the continuous function
\(c_0\phi(|x-a|)\).  Since this continuous function is quasi-continuous and
represents the same \(H^1(B)\)-class as \(u|_B\), the uniqueness of
quasi-continuous representatives gives
\[
 \tilde u=c_0\phi(|x-a|)
 \quad\text{quasi-everywhere in }B.
\]
Let \(E_1\) and \(E_2\) be the exceptional sets in the two quasi-everywhere
statements
\[
 \tilde u=0
 \quad\text{on } \R^N\setminus\Omega
\]
and
\[
 \tilde u=c_0\phi(|x-a|)
 \quad\text{in }B,
\]
respectively.  Then \(\SobCap(E_1)=\SobCap(E_2)=0\).  If
\(x\in (B\setminus\Omega)\setminus(E_1\cup E_2)\), then the first statement
gives \(\tilde u(x)=0\), while the second gives $\tilde u(x)=c_0\phi(|x-a|)>0$, because \(c_0>0\) and \(\phi\) is strictly positive in \(B\).  This is
impossible. Hence $B\setminus\Omega\subset E_1\cup E_2$. By monotonicity and subadditivity of the Sobolev capacity, we have
\[
 \SobCap(B\setminus\Omega)=0.
\]

We next show that $\Omega\cap\partial B=\varnothing$. Suppose instead that \(x_0\in\Omega\cap\partial B\).  Since \(\Omega\) is
open, one can choose \(\zeta\in C_c^\infty(\Omega)\), \(\zeta\ge0\), with
\(\zeta(x_0)>0\).  In particular, after shrinking the support if necessary,
\[
 \int_{\partial B}\zeta\,\dd\mathcal H^{N-1}>0.
\]
Since \(u=c_0\phi(|\cdot-a|)\chi_B\) a.e. in \(\R^N\), and since
\(\SobCap(B\setminus\Omega)=0\) implies \(|B\setminus\Omega|=0\), the weak
eigenvalue equation tested against \(\zeta\) gives
\[
 0
 =
 \int_\Omega \nabla u\cdot\nabla\zeta\,\dd x
 -\alpha^2\int_\Omega u\zeta\,\dd x
 =
 c_0\left(
 \int_B \nabla\phi(|x-a|)\cdot\nabla\zeta\,\dd x
 -\alpha^2\int_B \phi(|x-a|)\zeta\,\dd x
 \right).
\]
Integrating by parts in \(B\), and using
\(-\Delta\phi(|x-a|)=\alpha^2\phi(|x-a|)\) in \(B\), we obtain
\[
 0
 =
 c_0\int_{\partial B}\zeta\,\partial_\nu\phi\,\dd\mathcal H^{N-1}
 =
 c_0\phi'(1)
 \int_{\partial B}\zeta\,\dd\mathcal H^{N-1},
\]
where \(\partial_\nu\) denotes the outward normal derivative on \(\partial B\).
Since \(j_{\nu,1}\) is a simple zero of \(J_\nu\), one has
\(\phi'(1)=\alpha J_\nu'(\alpha)\ne0\).  Together with
\(\phi>0\) on \([0,1)\) and \(\phi(1)=0\), this forces
\(\phi'(1)<0\). Hence
\[
 \partial_\nu\phi=\phi'(1)<0
 \quad\text{on }\partial B.
\]
This contradicts \(c_0>0\) and
\(\int_{\partial B}\zeta\,\dd\mathcal H^{N-1}>0\). Therefore $\Omega\cap\partial B=\varnothing$.

Since sets of zero \(H^1\)-capacity have Lebesgue measure zero and \(B\) has
positive measure, the identity \(\SobCap(B\setminus\Omega)=0\) implies that
\(\Omega\cap B\ne\varnothing\).  

We claim that $\Omega\setminus\overline B=\varnothing$. Indeed, if \(\Omega\setminus\overline B\) were nonempty, then, because
\(\Omega\cap\partial B=\varnothing\), we would have the disjoint decomposition $\Omega=(\Omega\cap B)\,\dot\cup\,(\Omega\setminus\overline B)$. Both sets on the right are nonempty and open in \(\Omega\), contradicting the
connectedness of the domain \(\Omega\). Hence $\Omega\setminus\overline B=\varnothing$. Together with \(\Omega\cap\partial B=\varnothing\), this gives
\(\Omega\subset B\). Therefore $\Omega\triangle B=B\setminus\Omega$, and hence
\[
 \SobCap(\Omega\triangle B)=0
\]
in the normalized variables.

It remains to return to the original scale. Let \(\Omega_{\mathrm{orig}}\) denote the domain before the normalization in the equality argument, and set $s=\sqrt{\lambda_1(\Omega_{\mathrm{orig}})}/\alpha$. Then \(\lambda_1(s\Omega_{\mathrm{orig}})=\alpha^2\). The normalized equality
analysis gives, for some \(a\in\R^N\),
\[
 \SobCap\bigl((s\Omega_{\mathrm{orig}})\triangle B_1(a)\bigr)=0.
\]
Applying the inverse dilation \(x\mapsto x/s\), and using $s^{-1}B_1(a)=B_{1/s}(a/s)$, we obtain
\[
 \SobCap\bigl(\Omega_{\mathrm{orig}}\triangle B_{1/s}(a/s)\bigr)=0,
\]
because the property of having zero Sobolev \(H^1\)-capacity is preserved
under dilations.  This is precisely \eqref{eq:equality-capacity} for the
original domain.

It remains to show that the capacitary equality becomes literal equality under
the Lipschitz assumption.  In the normalized variables, suppose that
\(\Omega\) is a Lipschitz domain.  We have already proved $\Omega\triangle B=B\setminus\Omega$. Suppose, for contradiction, that \(\Omega\ne B\).  Choose
\(x_0\in B\setminus\Omega\).  If \(x_0\in B\setminus\overline\Omega\), then
\(B\setminus\Omega\) contains a nonempty open ball and hence has positive
\(H^1\)-capacity. Otherwise \(x_0\in\partial\Omega\cap B\).  Since \(\Omega\) is a Lipschitz
domain, its boundary is locally the graph of a Lipschitz function; see
\cite[Definition~1.2.1.1]{Grisvard2011}.  The elementary cone construction
from this local graph representation gives an exterior truncated cone with
vertex \(x_0\) contained in \(\R^N\setminus\Omega\).  Since \(x_0\in B\), we
may shorten the cone, if necessary, so that it is contained in
\(B\setminus\Omega\).  This cone contains a nonempty open ball; since sets of
zero \(H^1\)-capacity have Lebesgue measure zero, it has positive
\(H^1\)-capacity. Thus, in either case, $\SobCap(B\setminus\Omega)>0$. This contradicts
\[
 \SobCap(\Omega\triangle B)=\SobCap(B\setminus\Omega)=0.
\]
Therefore \(\Omega=B\) in the normalized variables.  Scaling back, the
original Lipschitz equality case is a Euclidean ball.  This completes the
proof of \cref{thm:main}.

\medskip

We record the corresponding variant for bounded open sets which are not assumed to be connected.

\begin{remark}\label{rem:disconnected}
The inequality in \cref{thm:main} also extends to arbitrary bounded open
sets, with the convention that any term with
\(\lambda_{i+1}(\Omega)=\lambda_1(\Omega)\) is interpreted as \(+\infty\).
Indeed, if \(\lambda_1(\Omega)\) is not simple, then
\(\lambda_2(\Omega)=\lambda_1(\Omega)\), and the left-hand side of
\eqref{eq:main} is \(+\infty\). If \(\lambda_1(\Omega)\) is simple, the proof of the inequality above applies
with the same argument to a normalized first eigenfunction \(u\).  The
connectedness assumption was used only to ensure the simplicity of
\(\lambda_1\) and, later, in the equality analysis.  In the trace Ritz step,
the needed spectral input is precisely the positive gap
\(\lambda_2(\Omega)-\lambda_1(\Omega)>0\) on \(u^\perp\).

The equality case is correspondingly larger.  With the convention
\(\lambda_1(\varnothing)=+\infty\), equality holds precisely for those
bounded open sets \(\Omega\) for which there exist a ball \(B_R(a)\) and a
possibly empty bounded open set \(D\), with
\(D\cap\overline{B_R(a)}=\varnothing\), such that
\[
 \SobCap\bigl(\Omega\triangle (B_R(a)\,\dot\cup\,D)\bigr)=0
\qquad\text{and}\qquad
 \lambda_1(D)\ge\lambda_2(B_R(a)).
\]
The sufficiency is immediate from the fact that the Dirichlet spectrum of a
disjoint union is the ordered union of the spectra of its components, counted
with multiplicities.  Under the displayed condition on \(D\), the first
\(N\) eigenvalues after \(\lambda_1(B_R(a))\) are still equal to \(\lambda_2(B_R(a))\), and hence the reciprocal
sum has the ball value.

Conversely, suppose equality holds for an arbitrary bounded open set.  The
case of nonsimple \(\lambda_1\) is excluded by the \(+\infty\) convention.
Repeating the equality analysis above, up to but not including the final
connectedness argument, gives a ball \(B\) such that
\[
 \SobCap(B\setminus\Omega)=0,
 \qquad
 \Omega\cap\partial B=\varnothing.
\]
Set $D=\Omega\setminus\overline B$. Then \(D\) is a possibly empty bounded open set and
\(D\cap\overline B=\varnothing\).  Since \(\Omega\cap\partial B=\varnothing\),
we have $\Omega\triangle(B\,\dot\cup\,D)=B\setminus\Omega$. Hence
\[
 \SobCap\bigl(\Omega\triangle(B\,\dot\cup\,D)\bigr)=0.
\]
Capacity-zero modifications do not change \(H_0^1\), and therefore do not
change the Dirichlet spectrum.  Thus \(\Omega\) and \(B\,\dot\cup\,D\) have
the same spectrum.

If \(D\ne\varnothing\), then necessarily $\lambda_1(D)>\lambda_1(B)$. Indeed, if \(\lambda_1(D)<\lambda_1(B)\), then
\[
 \lambda_1(B\,\dot\cup\,D)=\lambda_1(D)<\lambda_1(B)=\lambda_1(\Omega)=\lambda_1(B\,\dot\cup\,D),
\]
a contradiction.  If \(\lambda_1(D)=\lambda_1(B)\), then the first
eigenvalue of \(B\,\dot\cup\,D\) has multiplicity at least two, one
eigenfunction being supported on \(B\) and another on \(D\).  This contradicts
the already excluded nonsimple case.  Therefore $\lambda_1(D)>\lambda_1(B)$.

We have already shown that, if \(D\ne\varnothing\), then
\(\lambda_1(D)>\lambda_1(B)\).  Suppose now, for contradiction, that $\lambda_1(D)<\lambda_2(B)$. Then
\[
 \lambda_1(B)<\lambda_1(D)<\lambda_2(B).
\]
In the ordered spectrum of \(B\,\dot\cup\,D\), the eigenvalue
\(\lambda_1(D)\) therefore appears among the eigenvalues above
\(\lambda_1(B)\) and below \(\lambda_2(B)\).  On the other hand, the ball
contributes \(N\) eigenvalues equal to \(\lambda_2(B)\).  Hence the first
\(N\) eigenvalues of \(B\,\dot\cup\,D\) above \(\lambda_1(B)\) are all at
most \(\lambda_2(B)\), and at least one of them is strictly smaller.
Consequently, the corresponding \(N\) gaps are all at most \(\lambda_2(B)-\lambda_1(B)\), and at least
one is strictly smaller.  Therefore
\[
 \sum_{i=1}^N
 \frac{\lambda_1(B)}
 {\lambda_{i+1}(B\,\dot\cup\,D)-\lambda_1(B)}
 >
 \frac{N\lambda_1(B)}{\lambda_2(B)-\lambda_1(B)}.
\]
This is strictly larger than the ball value, contradicting equality.  Hence
\[
 \lambda_1(D)\ge\lambda_2(B).
\]
Scaling back gives the stated condition for \(B_R(a)\).  This explains why
connectedness is the natural assumption for the sharp equality statement in
\cref{thm:main}.
\end{remark}

\appendix

\section{Algebra for the Riccati barrier residuals}\label{app:barrier-algebra}

The elementary algebra leading to the residual formulas used in
\cref{subsec:riccati-barriers} is collected here.

\begin{lemma}\label{lem:barrier-algebra}
Let
\[
 \mathcal T(x)=\sum_{i=2}^{\infty}\frac{y_i}{1+y_ix^2},
 \qquad
 x=\sqrt{1-r^2}.
\]
For the functions
\[
 Q_-(r)=x^2,
 \qquad
 Q_{\kappa}(r)=\frac{\kappa x^2}{1+(\kappa-1)x^2},
 \quad \kappa>1,
\]
the residual \(E_Q\) defined in \eqref{eq:EQ} satisfies
\eqref{eq:lower-residual}--\eqref{eq:Phi-minus-derivative} and
\eqref{eq:upper-residual}--\eqref{eq:A0-A1}, respectively.
\end{lemma}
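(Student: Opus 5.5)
The plan is a direct computation in the variable $x=\sqrt{1-r^2}$, using the representation \eqref{eq:h-x} of $h$ and the identity \eqref{eq:sum-y}. First record the conversions $r\,\frac{\dd}{\dd r}=-\frac{r^2}{x}\frac{\dd}{\dd x}$, $r^2=1-x^2$, and
\[
 h=-\frac{2r^2}{x^2}-2r^2\mathcal T(x),
\]
so that every term of $E_Q$ in \eqref{eq:EQ} can be written in $x$. For each barrier $Q$ we have $Q(x)\to0$ as $x\to0$, $Q=1$ at $x=1$, and the goal is to show that $E_Q$ is divisible by $r^2=1-x^2$ and to identify the quotient.

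\emph{Lower barrier.} With $Q_-=x^2$, we get $rQ_-'=-2r^2$, $Q_-^2=x^4$, and $(N-2)Q_-+2hQ_-=(N-2)x^2-4r^2-4r^2x^2\mathcal T$. Adding $Cr^2-(N-1)$ and writing $x^4+(N-2)x^2-(N-1)=(x^2-1)(x^2+N-1)=-r^2(x^2+N-1)$, every term carries a factor $r^2$. The quotient is
\[
 C-6-(N-1)-x^2-4x^2\mathcal T=C-N-5-x^2-4\sum_{k\ge2}\frac{x^2y_k}{1+y_kx^2},
\]
which is \eqref{eq:Phi-minus}. Differentiating termwise, justified by the uniform convergence noted after \eqref{eq:sum-y}, uses $\frac{\dd}{\dd x}\frac{x^2y}{1+yx^2}=\frac{2xy}{(1+yx^2)^2}$ and gives \eqref{eq:Phi-minus-derivative}.

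\emph{Upper barrier.} Write $Q=Q_\kappa=\kappa x^2/D$ with $D=D_\kappa(x)=1+(\kappa-1)x^2$. Then $Q'(x)=2\kappa x/D^2$, so $rQ'=-r^2\cdot 2\kappa/D^2$. The terms $Q^2+(N-2)Q-(N-1)-4r^2Q/x^2$ form a rational function in $x^2$ with denominator $D^2$. Its numerator vanishes at $x^2=1$ because $Q(1)=1$ gives $1+(N-2)-(N-1)=0$ there, and the $h$-term already carries $r^2$. Factoring out $1-x^2$ should leave a numerator that is linear in $x^2$. The term $-4r^2Q\mathcal T$ contributes $-4Q\mathcal T$ directly. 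Collecting everything over $D^2$, the bookkeeping step is to verify that the linear numerator equals $(-4\kappa^2-\kappa N+4\kappa+N-1)x^2-6\kappa-N+1$; checking the values at $x=0$ and at $x^2=1$ suffices. This yields \eqref{eq:Phi-K}.

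\emph{Derivative of $\Phi_\kappa$.} The main obstacle is obtaining \eqref{eq:Phi-K-derivative}, because the rational part alone does not produce the stated $A_0,A_1$. The key step is to use \eqref{eq:sum-y}, $\sum y_i=N/4$, to rewrite $-N\,Q'(x)/4$-type contributions as $\sum_i Q'(x)y_i$. Concretely, I would differentiate the rational part, obtaining a numerator of the form $2x(\cdot)/D^3$. I would then subtract $4Q'\sum y_i=2\kappa N x/D^2$ from it (equivalently, add and subtract), so that the series part becomes $4\sum_i\bigl[Q'y_i-(Qy_i/(1+y_ix^2))'\bigr]$. The remaining rational piece should have numerator $2x(A_0+A_1x^2)$ over $D^3$, and I would verify this by matching the coefficients of $x^0$ and $x^2$ after clearing denominators. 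This coefficient matching is routine but error-prone, so it is the step to check carefully, including at a test value such as $\kappa=1$.
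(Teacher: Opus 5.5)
Your proposal is correct and follows the paper's proof essentially step for step. You rewrite $E_Q$ in the variable $x$ via \eqref{eq:h-x} and factor out $R=r^2$ for each barrier; for $\Phi_\kappa'$ you split off $NQ_\kappa'=4\sum_i Q_\kappa' y_i$ using \eqref{eq:sum-y}, which is exactly how the paper isolates the numerator $A_0+A_1x^2$. The only cosmetic difference is that you determine the linear numerator $B_0+B_1x^2$ from its values at $x^2=0$ and $x^2=1$ instead of collecting coefficients directly, and your $\kappa=1$ check (which recovers $\Phi_-$ and \eqref{eq:Phi-minus-derivative}) is a valid consistency test.
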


\begin{proof}
Write
\[
 R=1-x^2=r^2,
 \qquad
 D_{\kappa}=1+(\kappa-1)x^2.
\]
If \(Q=Q(x)\), then \(dx/dr=-r/x\), and hence
\[
 r\frac{dQ}{dr}=-\frac{R}{x}Q_x.
\]
Using \eqref{eq:h-x}, and writing \(\mathcal T=\mathcal T(x)\), the residual \eqref{eq:EQ}
becomes
\begin{equation}\label{eq:app-general-residual}
 E_Q=-\frac{R}{x}Q_x+Q^2+(N-2)Q
      -4R\frac{Q}{x^2}-4RQ\mathcal T+CR-(N-1).
\end{equation}
We shall apply this identity only to the two barriers above, for which
\(Q/x^2\) is regular at \(x=0\).

For $Q_-=x^2$, \eqref{eq:app-general-residual} gives
\begin{align*}
 E_{Q_-}
 &=-2R+x^4+(N-2)x^2-4R-4Rx^2\mathcal T+CR-(N-1)\\
 &=R\left(C-N-5-x^2-4x^2\mathcal T\right).
\end{align*}
This is \eqref{eq:lower-residual}--\eqref{eq:Phi-minus}.  Since
\[
 \frac{d}{dx}\left(\frac{x^2y}{1+yx^2}\right)
 =\frac{2xy}{(1+yx^2)^2},
\]
termwise differentiation, justified by the uniform convergence stated after
\eqref{eq:sum-y}, gives \eqref{eq:Phi-minus-derivative}.

For the upper barrier, \(Q_{\kappa}=\kappa x^2/D_{\kappa}\), and
\begin{equation}\label{eq:app-QK-prime}
 Q_{\kappa}'(x)=\frac{2\kappa x}{D_{\kappa}^2}.
\end{equation}
Substituting \(Q_{\kappa}\) into \eqref{eq:app-general-residual} yields
\begin{align*}
 E_{Q_{\kappa}}
 ={}&R(C-4Q_{\kappa}\mathcal T)
 -\frac{2\kappa R}{D_{\kappa}^2}+\frac{\kappa^2x^4}{D_{\kappa}^2}
 +(N-2)\frac{\kappa x^2}{D_{\kappa}}-4\frac{\kappa R}{D_{\kappa}}-(N-1).
\end{align*}
Putting the non-series terms after \(R(C-4Q_{\kappa}\mathcal T)\) over the common
denominator \(D_{\kappa}^2\), and collecting powers of \(x^2\), gives
\[
 E_{Q_{\kappa}}
 =
 R\left(C+\frac{B_0+B_1x^2}{D_{\kappa}^2}-4Q_{\kappa}\mathcal T\right),
\]
where
\[
 B_0=-6\kappa-N+1,
 \qquad
 B_1=-4\kappa^2-\kappa N+4\kappa+N-1.
\]
This is \eqref{eq:upper-residual}--\eqref{eq:Phi-K}.

It remains to compute the derivative. Put $\mathcal P(x)=B_0+B_1x^2$. Then
\[
 \left(\frac{\mathcal P}{D_{\kappa}^2}\right)'
 =
 \frac{2x}{D_{\kappa}^3}
 \left(B_1D_{\kappa}-2(\kappa-1)\mathcal P\right).
\]
A direct collection of coefficients gives
\[
 B_1D_{\kappa}-2(\kappa-1)\mathcal P=A_0+A_1x^2+N\kappa D_{\kappa},
\]
where
\[
 A_0=8\kappa^2-10\kappa-N+1,
 \qquad
 A_1=4\kappa^3-8\kappa^2-\kappa N+5\kappa+N-1.
\]
The term \(N\kappa D_{\kappa}\) is separated in order to use the Bessel-zero sum identity
\eqref{eq:sum-y}.  Indeed, by the formula for \(Q_{\kappa}'\) in \eqref{eq:app-QK-prime}, its contribution to
the derivative is
\[
 \frac{2xN\kappa D_{\kappa}}{D_{\kappa}^3}=NQ_{\kappa}'(x)
 =
 4\sum_{i=2}^{\infty}Q_{\kappa}'(x)y_i.
\]
Therefore, with termwise differentiation justified by the uniform convergence
stated after \eqref{eq:sum-y},
\begin{align*}
 \Phi_{\kappa}'(x)
 &=\frac{2x(A_0+A_1x^2)}{D_{\kappa}^3}
   +4\sum_{i=2}^{\infty}Q_{\kappa}'(x)y_i
   -4\sum_{i=2}^{\infty}
    \left(\frac{Q_{\kappa}(x)y_i}{1+y_i x^2}\right)' \\
 &=\frac{2x(A_0+A_1x^2)}{D_{\kappa}^3}
   +4\sum_{i=2}^{\infty}
    \left[
      Q_{\kappa}'(x)y_i-
      \left(\frac{Q_{\kappa}(x)y_i}{1+y_ix^2}\right)'
    \right],
\end{align*}
which is \eqref{eq:Phi-K-derivative}.
\end{proof}

\section*{Acknowledgments and AI disclosure}

During the development and preparation of this work, the authors used a
generative AI tool for preliminary, non-authoritative exploratory assistance,
including organizational discussion and the consideration of possible
approaches.  The AI-generated outputs were not treated as mathematical
sources, and no argument was included without independent verification and
substantial revision by the authors.  All theorem statements, proofs,
computations, references, and final arguments were independently checked,
revised, and finalized by the authors, who take full responsibility for the
correctness, originality, and integrity of the paper.


\begin{thebibliography}{99}

\bibitem{Ashbaugh1999}
M.~S. Ashbaugh,
\emph{Open problems on eigenvalues of the Laplacian},
in \emph{Analytic and Geometric Inequalities and Applications}, Math. Appl., vol.~478,
Kluwer Academic Publishers, Dordrecht, 1999, pp.~13--28.

\bibitem{Ashbaugh2002}
M.~S. Ashbaugh,
\emph{The universal eigenvalue bounds of Payne--P\'olya--Weinberger, Hile--Protter, and H.~C. Yang},
Proc. Indian Acad. Sci. Math. Sci. \textbf{112} (2002), no.~1, 3--30.

\bibitem{AshbaughBenguria1991}
M.~S. Ashbaugh and R.~D. Benguria,
\emph{Proof of the Payne--P\'olya--Weinberger conjecture},
Bull. Amer. Math. Soc. (N.S.) \textbf{25} (1991), no.~1, 19--29.

\bibitem{AshbaughBenguria1992a}
M.~S. Ashbaugh and R.~D. Benguria,
\emph{A sharp bound for the ratio of the first two eigenvalues of Dirichlet Laplacians and extensions},
Ann. of Math. (2) \textbf{135} (1992), no.~3, 601--628.

\bibitem{AshbaughBenguria1992b}
M.~S. Ashbaugh and R.~D. Benguria,
\emph{A second proof of the Payne--P\'olya--Weinberger conjecture},
Comm. Math. Phys. \textbf{147} (1992), no.~1, 181--190.

\bibitem{AshbaughBenguria1993}
M.~S. Ashbaugh and R.~D. Benguria,
\emph{More bounds on eigenvalue ratios for Dirichlet Laplacians in $n$ dimensions},
SIAM J. Math. Anal. \textbf{24} (1993), no.~6, 1622--1651.

\bibitem{AshbaughBenguria1993Neumann}
M.~S. Ashbaugh and R.~D. Benguria,
\emph{Universal bounds for the low eigenvalues of Neumann Laplacians in $n$ dimensions},
SIAM J. Math. Anal. \textbf{24} (1993), no.~3, 557--570.

\bibitem{AshbaughBenguria1994}
M.~S. Ashbaugh and R.~D. Benguria,
\emph{Bounds for ratios of eigenvalues of the Dirichlet Laplacian},
Proc. Amer. Math. Soc. \textbf{121} (1994), no.~1, 145--150.

\bibitem{AshbaughBenguria2001}
M.~S. Ashbaugh and R.~D. Benguria,
\emph{A sharp bound for the ratio of the first two Dirichlet eigenvalues of a domain in a hemisphere of $S^n$},
Trans. Amer. Math. Soc. \textbf{353} (2001), no.~3, 1055--1087.

\bibitem{AshbaughHermi2004}
M.~S. Ashbaugh and L.~Hermi,
\emph{A unified approach to universal inequalities for eigenvalues of elliptic operators},
Pacific J. Math. \textbf{217} (2004), no.~2, 201--219.

\bibitem{BariczJankovMasirevicPoganySzasz2015}
\'A.~Baricz, D.~Jankov Ma\v{s}irevi\'c, T.~K. Pog\'any and R.~Sz\'asz,
\emph{On an identity for zeros of Bessel functions},
J. Math. Anal. Appl. \textbf{422} (2015), no.~1, 27--36.

\bibitem{BenguriaLinde2007}
R.~D. Benguria and H.~Linde,
\emph{A second eigenvalue bound for the Dirichlet Laplacian in hyperbolic space},
Duke Math. J. \textbf{140} (2007), no.~2, 245--279.

\bibitem{BenguriaLindeLoewe2012}
R.~D. Benguria, H.~Linde and B.~Loewe,
\emph{Isoperimetric inequalities for eigenvalues of the Laplacian and the Schr\"odinger operator},
Bull. Math. Sci. \textbf{2} (2012), no.~1, 1--56.

\bibitem{Brands1964}
J.~J.~A.~M. Brands,
\emph{Bounds for the ratios of the first three membrane eigenvalues},
Arch. Rational Mech. Anal. \textbf{16} (1964), 265--268.

\bibitem{Brezis2011}
H.~Brezis,
\emph{Functional Analysis, Sobolev Spaces and Partial Differential Equations},
Universitext, Springer, New York, 2011.

\bibitem{BucurButtazzo2005}
D.~Bucur and G.~Buttazzo,
\emph{Variational Methods in Shape Optimization Problems},
Progress in Nonlinear Differential Equations and Their Applications, vol.~65,
Birkh\"auser Boston, Boston, MA, 2005.

\bibitem{ChenMao2024}
R.~Chen and J.~Mao,
\emph{On the Ashbaugh--Benguria type conjecture about lower-order Neumann eigenvalues of the Witten-Laplacian},
arXiv:2403.08070, 2024.

\bibitem{ChenZheng2011}
Z.-C. Chen and T.~Zheng,
\emph{Bounds for ratios of the membrane eigenvalues},
J. Differential Equations \textbf{250} (2011), no.~3, 1575--1590.

\bibitem{Chiti1982}
G.~Chiti,
\emph{A reverse H\"older inequality for the eigenfunctions of linear second order elliptic operators},
Z. Angew. Math. Phys. \textbf{33} (1982), no.~1, 143--148.

\bibitem{Chiti1983}
G.~Chiti,
\emph{A bound for the ratio of the first two eigenvalues of a membrane},
SIAM J. Math. Anal. \textbf{14} (1983), no.~6, 1163--1167.

\bibitem{EvansGariepy2015}
L.~C. Evans and R.~F. Gariepy,
\emph{Measure Theory and Fine Properties of Functions}, revised ed.,
CRC Press, Boca Raton, FL, 2015.

\bibitem{Faber1923}
G.~Faber,
\emph{Beweis, da\ss{} unter allen homogenen Membranen von gleicher Fl\"ache und gleicher Spannung die kreisf\"ormige den tiefsten Grundton gibt},
Sitzungsberichte der mathematisch-physikalischen Klasse der Bayerischen Akademie der Wissenschaften, 1923, pp.~169--172.

\bibitem{Grisvard2011}
P.~Grisvard,
\emph{Elliptic Problems in Nonsmooth Domains},
Classics in Applied Mathematics, vol.~69,
SIAM, Philadelphia, 2011.

\bibitem{HeLiTang2026}
Y.~He, Y.~Li and Q.~Tang,
\emph{A proof of the Ashbaugh--Benguria conjecture for reciprocal sums of Neumann eigenvalues},
arXiv:2606.08271, 2026.

\bibitem{Henrot2006}
A.~Henrot,
\emph{Extremum Problems for Eigenvalues of Elliptic Operators},
Frontiers in Mathematics, Birkh\"auser, Basel, 2006.

\bibitem{Hersch1961}
J.~Hersch,
\emph{Caract\'erisation variationnelle d'une somme de valeurs propres cons\'ecutives; g\'en\'eralisation d'in\'egalit\'es de P\'olya--Schiffer et de Weyl},
C. R. Acad. Sci. Paris \textbf{252} (1961), 1714--1716.

\bibitem{HileProtter1980}
G.~N. Hile and M.~H. Protter,
\emph{Inequalities for eigenvalues of the Laplacian},
Indiana Univ. Math. J. \textbf{29} (1980), no.~4, 523--538.

\bibitem{HileXu1993}
G.~N. Hile and Z.~Y. Xu,
\emph{Inequalities for sums of reciprocals of eigenvalues},
J. Math. Anal. Appl. \textbf{180} (1993), no.~2, 412--430.

\bibitem{Krahn1925}
E.~Krahn,
\emph{\"Uber eine von Rayleigh formulierte Minimaleigenschaft des Kreises},
Math. Ann. \textbf{94} (1925), 97--100.

\bibitem{Krahn1926}
E.~Krahn,
\emph{\"Uber Minimaleigenschaften der Kugel in drei und mehr Dimensionen},
Acta Comm. Univ. Tartu (Dorpat) A \textbf{9} (1926), 1--44.

\bibitem{Laugesen2021}
R.~S. Laugesen,
\emph{Well-posedness of Weinberger's center of mass by Euclidean energy minimization},
J. Geom. Anal. \textbf{31} (2021), no.~9, 8762--8779.

\bibitem{LevitinMangoubiPolterovich2023}
M.~Levitin, D.~Mangoubi and I.~Polterovich,
\emph{Topics in Spectral Geometry},
Graduate Studies in Mathematics, vol.~237, American Mathematical Society, Providence, RI, 2023.

\bibitem{Marcellini1980}
P.~Marcellini,
\emph{Bounds for the third membrane eigenvalue},
J. Differential Equations \textbf{37} (1980), no.~3, 438--443.

\bibitem{DLMF}
NIST Digital Library of Mathematical Functions, \emph{Chapter 10: Bessel Functions}, especially \S10.21, \url{https://dlmf.nist.gov/10.21}.

\bibitem{PaynePolyaWeinberger1955}
L.~E. Payne, G.~P\'olya and H.~F. Weinberger,
\emph{Sur le quotient de deux fr\'equences propres cons\'ecutives},
C. R. Acad. Sci. Paris \textbf{241} (1955), 917--919.

\bibitem{PaynePolyaWeinberger1956}
L.~E. Payne, G.~P\'olya and H.~F. Weinberger,
\emph{On the ratio of consecutive eigenvalues},
J. Math. and Phys. \textbf{35} (1956), 289--298.

\bibitem{Szego1954}
G.~Szeg\H{o},
\emph{Inequalities for certain eigenvalues of a membrane of given area},
J. Rational Mech. Anal. \textbf{3} (1954), 343--356.

\bibitem{Thompson1969}
C.~J. Thompson,
\emph{On the ratio of consecutive eigenvalues in $n$-dimensions},
Stud. Appl. Math. \textbf{48} (1969), 281--283.

\bibitem{WangXia2021}
Q.~Wang and C.~Xia,
\emph{On Ashbaugh--Benguria's conjecture about lower-order Dirichlet eigenvalues of the Laplacian},
Anal. PDE \textbf{14} (2021), no.~7, 2069--2078.


\bibitem{Weinberger1956}
H.~F. Weinberger,
\emph{An isoperimetric inequality for the $N$-dimensional free membrane problem},
J. Rational Mech. Anal. \textbf{5} (1956), 633--636.

\bibitem{XiaWang2023}
C.~Xia and Q.~Wang,
\emph{On a conjecture of Ashbaugh and Benguria about lower eigenvalues of the Neumann Laplacian},
Math. Ann. \textbf{385} (2023), no.~1--2, 863--879.

\end{thebibliography}
\end{document}